\setlist[description]{leftmargin=\parindent,labelindent=\parindent}
\definecolor{mygray}{gray}{0.80}
\definecolor{myblue}{rgb}{0.50,0.50, 0.95}
\definecolor{myred}{rgb}{0.99, 0.51, 0.65}
\let\OLDthebibliography\thebibliography
\renewcommand\thebibliography[1]{
  \OLDthebibliography{#1}
  \setlength{\parskip}{0pt}
  \setlength{\itemsep}{0.5pt}
}
\newtheorem{theorem}{Theorem}[section]
\newtheorem{theorem*}{Theorem}
\newtheorem{corollary}[theorem]{Corollary}
\newtheorem{corollary*}[theorem*]{Corollary}
\newtheorem{lemma}[theorem]{Lemma}
\newtheorem{proposition}[theorem]{Proposition}
\theoremstyle{definition}
\newtheorem{definition}[theorem]{Definition}
\newtheorem{remark}[theorem]{Remark}
\newtheorem*{question*}{Question}
\newtheorem*{conjecture*}{Conjecture}
\newtheorem{example}[theorem]{Example}
\newtheorem*{notation*}{Notation}
\newtheorem*{claim*}{Claim}
\newtheorem{definitiontheorem}[theorem]{Definition-Theorem}
\begin{document}
\begin{spacing}{1.2}
\title{\textbf{On $\tau$-tilting finite simply connected algebras}}
\author{Qi Wang\vspace{0.2cm} \\ \small{\ \emph{On the occasion of Professor Susumu Ariki's 60th birthday.}}}

\keywords{Support $\tau$-tilting modules, $\tau$-tilting finite, simply connected algebras.}
\abstract{\ \ \ \ We show that $\tau$-tilting finite simply connected algebras are representation-finite. Then, some related algebras are considered, including iterated tilted algebras, tubular algebras and so on. We also prove that the $\tau$-tilting finiteness of non-sincere algebras can be reduced to that of sincere algebras. This motivates us to give a complete
list of $\tau$-tilting finite sincere simply connected algebras.}

\maketitle
%\tableofcontents
%%%%%%%%%%%%%%%%%%%%%%%%%%%%%%%%%%%%%%%%%%%%%%%%%%%%%%%%%%%%%%%%%%%%%%%%%%%%%%%%%%%%%%%%%%%%%%%%%%%%%%%%%%%%%%%%%%%%%%%%%%%%%%
\section{Introduction}
In this paper, we always assume that $A$ is an associative finite-dimensional basic algebra with an identity over an algebraically closed field $K$. Moreover, the representation type of $A$ is divided into representation-finite, (infinite-)tame and wild.

In recent years, $\tau$-tilting theory introduced by Adachi, Iyama and Reiten \cite{AIR} has become increasingly important, where $\tau$ is the Auslander-Reiten translation for $A$. The core concept of $\tau$-tilting theory is the notion of support $\tau$-tilting modules: a right $A$-module $M$ is called support $\tau$-tilting if $\mathsf{Hom}_{A_M}(M,\tau M)=0$ and $\left | M \right |=\left | A_M \right |$ taking over $A_M:=A/A (1-e)A$, namely, $e$ is an idempotent of $A$ such that the direct summands of $eA/(e \mathsf{rad}\ A)$ are exactly the composition factors of $M$. Moreover, a support $\tau$-tilting $A$-module $M$ is called $\tau$-tilting if $A_M=A$, and any direct summand of $\tau$-tilting modules is called a $\tau$-rigid module. The class of support $\tau$-tilting modules is in bijection with the class of two-term silting complexes, left finite semibricks and so on. We refer to \cite{AIR} and \cite{Asai} for more details.

We are interested in $\tau$-tilting finite algebras studied in \cite{DIJ-tau-tilting-finite}, that is, algebras with finitely many pairwise non-isomorphic basic $\tau$-tilting modules (or equivalently, indecomposable $\tau$-rigid modules). It is easy to see that a representation-finite algebra is $\tau$-tilting finite. Also, it is not difficult to find a tame or a wild algebra which is $\tau$-tilting finite. The $\tau$-tilting finiteness for several classes of algebras has been determined, such as algebras with radical square zero \cite{Ada-rad-square-0}, preprojective algebras of Dynkin type \cite{Mizuno} and so on. In particular, it has been proven in some cases that $\tau$-tilting finiteness coincides with representation-finiteness, including gentle algebras \cite{P-gentle}, cycle-finite algebras \cite{MS-cycle-finite}, tilted and cluster-tilted algebras \cite{Z-tilted} and so on.

\paragraph{1.1 Motivations.} When we study the $\tau$-tilting finiteness of two-point algebras (i.e., algebras with only two simple modules) in \cite{two-point}, we find that the main variables depend on loops and oriented cycles. Thus, we want to see what happens if an algebra $A$ does not have multiple arrows, loops and oriented cycles, but has many vertices. For such algebras, we notice that the class of simply connected algebras is suitable since it is a rather large class of algebras and has been well-studied. Then, we focus on the $\tau$-tilting finiteness of simply connected algebras in this paper. In particular, we consider the triangle and rectangle quivers in detail as special cases.
\begin{center}
$\xymatrix@C=0.6cm@R=0.6cm{&&&\circ\ar[dr]&&&\\
&&\circ\ar[ur]\ar[dr]\ar@{.}[rr]&&\circ\ar[dr]&&\\
&\circ\ar[ur]\ar[dr]\ar@{.}[rr]&&\circ\ar[ur]\ar[dr]\ar@{.}[rr]&&\circ\ar[dr]&\\
\circ\ar[ur]&&\circ\ar[ur]&&\circ\ar[ur]&&\circ\\
&\vdots&&\vdots&&\vdots&}$\ \ \ \ \ \ \ \ \
$\xymatrix@C=0.7cm@R=0.6cm{\circ\ar[r]\ar[d]\ar@{.}[dr]&\circ\ar[r]\ar[d]\ar@{.}[dr]&\circ\ar[r]\ar[d]&\cdots\ar[r]&\circ\ar[r]\ar[d]\ar@{.}[dr]&\circ\ar[d]\\
\circ\ar[r]\ar[d]&\circ\ar[r]\ar[d]&\circ\ar[r]\ar[d]&\cdots\ar[r]&\circ\ar[r]\ar[d]&\circ\ar[d]\\
\vdots\ar[d]&\vdots\ar[d]&\vdots\ar[d]&\vdots&\vdots\ar[d]&\vdots\ar[d]\\
\circ\ar[r]\ar[d]\ar@{.}[dr]&\circ\ar[r]\ar[d]\ar@{.}[dr]&\circ\ar[r]\ar[d]&\cdots\ar[r]&\circ\ar[r]\ar[d]\ar@{.}[dr]&\circ\ar[d]\\
\circ\ar[r]&\circ\ar[r]&\circ\ar[r]&\cdots\ar[r]&\circ\ar[r]&\circ}$
\end{center}
\begin{center}
\textbf{Triangle quiver} \ \ \ \ \ \ \ \ \ \ \ \ \ \ \ \ \ \ \ \ \ \ \ \ \ \ \ \   \textbf{Rectangle quiver}
\end{center}
In these two special cases, the algebra $A\simeq KQ/I$ is presented by a triangle or a rectangle quiver $Q$ and an ideal $I$ generated by all possible commutativity relations. Here, a commutativity relation stands for the equality $w_1=w_2$ of two different paths $w_1$ and $w_2$ having the same source and target.

\paragraph{1.2 Main results.} The notion of simply connected algebras was first introduced by Bongartz and Gabriel \cite{BG} in representation-finite cases. The importance of these algebras is that we can reduce the representation theory of an arbitrary representation-finite algebra $A$ to that of a representation-finite simply connected algebra $B$. More precisely, for any representation-finite algebra $A$, the indecomposable $A$-modules can be lifted to indecomposable $B$-modules over a simply connected algebra $B$, which is contained inside a certain Galois covering of the standard form $\widetilde{A}$ of $A$, see \cite{BG-standard form} for details.

Soon after, Assem and Skowro$\acute{\text{n}}$ski \cite[Section 1.2]{AS-some-class} introduced the definition for an arbitrary algebra to be simply connected. In the case of representation-finite algebras, this new definition coincides with the definition in \cite{BG}. So we take this new definition in this paper (see Definition \ref{def-simply-connected}). Then, the class of simply connected algebras becomes rather large. For example, it includes tree algebras, tubular algebras, iterated tilted algebras of Euclidean type $\widetilde{\mathbb{D}}_n(n\geqslant 4)$, $\widetilde{\mathbb{E}}_6$, $\widetilde{\mathbb{E}}_7$, $\widetilde{\mathbb{E}}_8$ and so on.

The first main result of this paper is the following.
\begin{theorem}[Theorem \ref{result-strongly simply}]
Let $A$ be a simply connected algebra. Then, $A$ is $\tau$-tilting finite if and only if it is representation-finite.
\end{theorem}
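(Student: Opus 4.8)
The plan is to prove the nontrivial implication by contraposition: assuming $A$ is representation-infinite, I would exhibit a $\tau$-tilting infinite convex subcategory and then propagate $\tau$-tilting infiniteness upward to $A$. Since a representation-finite algebra is always $\tau$-tilting finite (as recalled in the introduction), this is all that remains.

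First I would record the two reduction principles for $\tau$-tilting finiteness from \cite{DIJ-tau-tilting-finite}: if $e$ is an idempotent of $A$ and $eAe$ is $\tau$-tilting infinite, then so is $A$; and if $I$ is an ideal and $A/I$ is $\tau$-tilting infinite, then so is $A$. Because a full convex subcategory $C$ of $A$ is obtained from $A$ by an idempotent truncation $eAe$ followed, if necessary, by passage to a quotient, it follows that $C$ being $\tau$-tilting infinite forces $A$ to be $\tau$-tilting infinite. This reduces the whole problem to locating a single $\tau$-tilting infinite convex subcategory inside an arbitrary representation-infinite simply connected $A$.

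The structural heart of the argument is Bongartz's criterion via the Tits form. After reducing (if necessary) to the strongly simply connected case, one has that $A$ is representation-finite if and only if its Tits form $q_A$ is weakly positive, and that $q_A$ fails to be weakly positive exactly when $A$ admits a convex subcategory isomorphic to a tame concealed (critical) algebra of Euclidean type. The decisive observation is that this dichotomy is uniform: whether $A$ is tame or wild, representation-infiniteness already forces $q_A$ not to be weakly positive, hence forces the existence of a critical convex subcategory. Consequently no separate treatment of the wild case (for instance via pg-critical algebras) is required.

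It then remains to verify that every tame concealed algebra is $\tau$-tilting infinite. Each such algebra is a representation-infinite tilted algebra obtained from a hereditary algebra of Euclidean type by a preprojective tilting module, so the result that tilted algebras are $\tau$-tilting finite if and only if representation-finite \cite{Z-tilted} applies and yields $\tau$-tilting infiniteness; the Kronecker algebra of the introduction is the prototype. Combining the three steps gives the claim. I expect the main obstacle to lie in the structural input rather than in the $\tau$-tilting bookkeeping: namely, making the passage from \emph{representation-infinite} to \emph{contains a critical convex subcategory} precise for merely simply connected (as opposed to strongly simply connected) algebras, which is why the Tits-form characterization and a reduction between the two notions of simple connectedness must be handled carefully.
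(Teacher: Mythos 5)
Your proposal is correct, and its skeleton coincides with the paper's: use the reduction lemma of \cite{DIJ-tau-tilting-finite} (Lemma \ref{quotient and idempotent}) to pass $\tau$-tilting infiniteness from a convex subcategory up to $A$, invoke the Tits-form/critical-subcategory criterion (Proposition \ref{tits-form}) to extract a critical convex subcategory from representation-infiniteness, and show that critical algebras are $\tau$-tilting infinite. The one genuine divergence is in the last step: you outsource it to Zito's theorem \cite{Z-tilted} that tilted algebras are $\tau$-tilting finite if and only if representation-finite, observing that critical algebras are by definition representation-infinite concealed (hence tilted) algebras of Euclidean type. The paper instead proves this step from scratch (Lemma \ref{critical}): a critical algebra has a preprojective component, every indecomposable module in that component is $\tau$-rigid since $\mathsf{rad}^\infty$ vanishes there, and representation-infiniteness forces that component to be infinite, giving infinitely many indecomposable $\tau$-rigid modules. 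Your route is shorter but less self-contained; the paper's argument isolates the more general principle (Remark 3.3, also \cite{AHMW-joint}) that any algebra with a preprojective component is $\tau$-tilting finite if and only if representation-finite, which is reused later (e.g., in Proposition \ref{result-triangular-matrix}). Two smaller points: your hedge about ``reducing to the strongly simply connected case'' is unnecessary, since Proposition \ref{tits-form}(1) is stated for simply connected algebras directly, so the passage from representation-infinite to the existence of a critical convex subcategory needs no care beyond citing it; and you should not conflate ``tame concealed'' with ``critical'' --- tame concealed algebras also include the concealed algebras of type $\widetilde{\mathbb{A}}_n$, which are not simply connected and are excluded from the paper's notion of critical, though this does not affect your argument since your step via \cite{Z-tilted} covers all of them anyway.
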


We recall that an algebra $A$ is called sincere if there exists an indecomposable $A$-module $M$ such that all simple $A$-modules appear in $M$ as composition factors. Otherwise, $A$ is called non-sincere. Then, the second main result of this paper is given as follows.
\begin{theorem}[Theorem \ref{sincere-alge}]
Let $\left \{ e_1,e_2,\ldots,e_n \right \}$ be a complete set of pairwise orthogonal primitive idempotents of $A$. If $A$ is non-sincere, then $A$ is $\tau$-tilting finite if and only if $A/Ae_iA$ is $\tau$-tilting finite for any $1\leqslant i\leqslant n$.
\end{theorem}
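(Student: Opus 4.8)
The plan is to route the entire argument through the brick characterization of $\tau$-tilting finiteness due to Demonet--Iyama--Jasso \cite{DIJ-tau-tilting-finite}: an algebra is $\tau$-tilting finite if and only if it admits only finitely many isomorphism classes of bricks, i.e.\ indecomposable modules $M$ with $\mathsf{End}(M)=K$. The reason to work with bricks rather than with indecomposable $\tau$-rigid modules is that being a brick is an \emph{intrinsic} property of a module --- indecomposability together with $\mathsf{End}(M)=K$ --- and so it transfers transparently along the full and exact embedding $\mathsf{mod}\, A/Ae_iA \hookrightarrow \mathsf{mod}\, A$. By contrast, the Auslander--Reiten translate is computed differently over $A$ and over the quotient $A/Ae_iA$, so a $\tau_A$-rigid module need not restrict to a $\tau_{A/Ae_iA}$-rigid module, and the reduction would not be lossless.

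First I would record the elementary bookkeeping. For a right $A$-module $M$ one has $\dim_K Me_i = [M:S_i]$, so $Me_i=0$ if and only if the simple module $S_i$ is not a composition factor of $M$, and this in turn holds if and only if $M$ is annihilated by the ideal $Ae_iA$, that is, $M$ lies in the full subcategory $\mathsf{mod}\, A/Ae_iA$. Since this embedding is full and exact, $\mathsf{End}$ and indecomposability are unchanged, and hence a module $M$ with $Me_i=0$ is a brick over $A$ if and only if it is a brick over $A/Ae_iA$. In other words, the bricks of $A/Ae_iA$ are precisely the bricks of $A$ whose support omits the vertex $i$.

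The forward direction is then immediate and, as expected, does not use non-sincerity: the bricks of $A/Ae_iA$ form a subset of the bricks of $A$, so if $A$ is $\tau$-tilting finite it has only finitely many bricks, whence each $A/Ae_iA$ has finitely many bricks and is $\tau$-tilting finite. For the converse I would use non-sincerity decisively. A brick of $A$ is in particular an indecomposable $A$-module; since $A$ is non-sincere, no indecomposable module is sincere, so every brick $M$ omits some simple $S_i$ as a composition factor, i.e.\ $Me_i=0$ for some $i$. By the previous step $M$ is then a brick of $A/Ae_iA$. Therefore the set of bricks of $A$ equals the finite union $\bigcup_{i=1}^{n}\{\text{bricks of } A/Ae_iA\}$; if each $A/Ae_iA$ is $\tau$-tilting finite, every term of this union is finite, so $A$ has finitely many bricks and is $\tau$-tilting finite by the criterion of \cite{DIJ-tau-tilting-finite}.

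The step I expect to require the most care is the passage between $A$ and the quotients $A/Ae_iA$: one must verify that the endomorphism algebra and the indecomposability of a brick are genuinely preserved in both directions (which is exactly why bricks, and not $\tau$-rigid modules, are the correct invariant here), and one must apply the defining property of non-sincerity to force \emph{every} brick --- not merely some distinguished family --- to drop a vertex. Both points are handled cleanly, the first by the intrinsic nature of bricks under the full exact embedding and the second by the definition of non-sincere algebra; once they are in place, the equality of brick sets makes the equivalence formal.
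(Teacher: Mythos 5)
Your proof is correct, and it takes a route that is parallel to but genuinely distinct from the paper's. The paper works with indecomposable $\tau$-rigid modules: by non-sincerity, every indecomposable $\tau$-rigid $A$-module $M$ satisfies $Me_i=0$ for some $i$, and then $M$ is an indecomposable $\tau$-rigid module over $B_i:=A/Ae_iA$, so the finitely many indecomposable $\tau$-rigid $B_i$-modules exhaust those of $A$; the forward direction is the general quotient-closure statement (Lemma 3.1 of the paper, citing \cite{AIRRT} and \cite{DIJ-tau-tilting-finite}). You replace ``indecomposable $\tau$-rigid module'' by ``brick'' and invoke the brick-finiteness characterization of \cite{DIJ-tau-tilting-finite} instead; the architecture --- a finiteness criterion in terms of a class of indecomposables, non-sincerity forcing every such indecomposable to omit a vertex, and a finite union over $i$ --- is identical. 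What your choice buys is that the transfer step between $\mathsf{mod}\ B_i$ and $\mathsf{mod}\ A$ becomes tautological, since being a brick is intrinsic to the underlying module; what the paper's choice buys is that it stays with the same invariant ($\tau$-rigid modules) already used in its other lemmas, at the cost of needing a (standard but nontrivial) fact about how $\tau$-rigidity behaves under quotients.

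One caveat: your stated reason for rejecting the $\tau$-rigid route is not accurate. For a module $M$ with $Me_i=0$, $\tau_A$-rigidity \emph{does} imply $\tau_{B_i}$-rigidity: apply $-\otimes_A B_i$ to a minimal projective presentation $P_1\to P_0\to M\to 0$ and use adjunction together with the criterion that $M$ is $\tau$-rigid when $\mathsf{Hom}_A(P_0,M)\to \mathsf{Hom}_A(P_1,M)$ is surjective. This is precisely the step the paper's proof relies on, so the reduction via $\tau$-rigid modules is in fact ``lossless'' in the direction that matters. What genuinely fails is the opposite direction --- a $\tau_{B_i}$-rigid module need not be $\tau_A$-rigid --- but that direction is never needed, since the forward implication of the theorem follows from quotient-closure rather than from lifting rigid modules. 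Your brick argument is therefore a clean alternative, not a necessity forced by a defect of the $\tau$-rigid approach.
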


Theorem 1.1 enables us to understand all $\tau$-tilting finite sincere simply connected algebras by quiver and relations, see Remark \ref{sincere-simply-connected}. Moreover, we point out that there are some inclusions as follows.
\begin{center}
$\begin{aligned}
&\{\text{Sincere simply connected algebras}\}\\
&\ \supseteq \{\text{Staircase algebras}\} \cup \{\text{Shifted-staircase algebras}\}\\
&\ \ \ \supseteq \begin{Bmatrix}
\text{Algebras presented by a triangle or a rectangle}\\
\text{quiver with all possible commutativity relations}
\end{Bmatrix}
\end{aligned}$
\end{center}
Then, the third main result of this paper indicates the $\tau$-tilting finiteness of the above subclasses, see Section 4 for details.

Lastly, as an example, we compute the number $\#\mathsf{s\tau\text{-}tilt}\ A$ of pairwise non-isomorphic basic support $\tau$-tilting $A$-modules for a specific $\tau$-tilting finite sincere simply connected algebra $A$. The motivation is explained in Section 5.

This paper is organized as follows. In Section 2, we review the definition of simply connected algebras as well as the definitions of critical algebras and Tits forms. In Section 3, we shall prove the first and second main results. This allows us to determine the $\tau$-tilting finiteness for several classes of algebras, such as iterated tilted algebras, tubular algebras and so on. In Section 4, we completely determine the $\tau$-tilting finiteness for algebras presented by a triangle or a rectangle quiver with all possible commutativity relations. In Section 5, we present an example to look at the number $\#\mathsf{s\tau\text{-}tilt}\ A$ for a specific $\tau$-tilting finite sincere simply connected algebra $A$.

\vspace{0.3cm}
\noindent\textit{Acknowledgements.} I am very grateful to my supervisor, Prof. Susumu Ariki, for giving me a lot of advice in writing this paper. Especially, many thanks to him for pointing out my logical gaps in the preparation of proofs. Also, I would like to thank Prof. Takuma Aihara, Kengo Miyamoto, Aaron Chan and Yingying Zhang for their many useful discussions on $\tau$-tilting theory. I am also very grateful to the anonymous referee for giving me so many useful suggestions and helping me improve this paper.

\section{Preliminaries}
Let $A\simeq KQ/I$ be an algebra with $Q=Q_A:=(Q_0,Q_1)$ over an algebraically closed field $K$. We may regard $A$ as a $K$-category (see \cite[Section 2]{BG}) which the class of objects is the set $Q_0$, and the class of morphisms from $i$ to $j$ is the $K$-vector space $KQ(i,j)$ of linear combinations of paths in $Q$ with source $i$ and target $j$, modulo the subspace $I(i,j):=I\cap KQ(i,j)$.

We recall some well-known definitions without further reference.
\begin{itemize}\setlength{\itemsep}{-3pt}
  \item $A$ is called triangular if $Q$ does not have loops and oriented cycles.
  \item A subcategory $B$ of $A$ is said to be full if for any $i,j\in Q_B$, every morphism $f:i\rightarrow j$ in $A$ is also in $B$; a full subcategory $B$ of $A$ is called convex if any path in $Q_A$ with source and target in $Q_B$ lies entirely in $Q_B$.
  \item A relation $\rho =\sum_{i=1}^{n}\lambda_i\omega_i \in I$ with $\lambda_i\neq 0$ and all $\omega_i$ have same source and target, is called minimal if $n\geqslant 2$ and $\sum_{j\in J}\lambda_j\omega_j \notin I$ for each non-empty proper subset $J\subset \left \{ 1,2,\dots,n \right \}$.
\end{itemize}

We introduce the definition of simply connected algebras as follows. Here, we follow the constructions in \cite{AS-some-class}. Let $A=KQ/I$ be a triangular algebra with a connected quiver $Q=(Q_0,Q_1, s, t)$ and an admissible ideal $I$. For each arrow $\alpha\in Q_1$, let $\alpha^-$ be its formal inverse with $s(\alpha^-)=t(\alpha)$ and $t(\alpha^-)=s(\alpha)$. Then, we set $Q_1^-:=\{\alpha^-\mid \alpha\in Q_1\}$.

A walk is a formal composition $w=w_1w_2\cdots w_n$ with $w_i\in Q_1\cup Q_1^-$ for all $1\leqslant i\leqslant n$. We set $s(w)=s(w_1)$, $t(w)=t(w_n)$ and denote by $1_x$ the trivial path at vertex $x$. For two walks $w$ and $u$ with $s(u)=t(w)$, the composition $wu$ is defined in the obvious way. In particular, $w=1_{s(w)}w=w1_{t(w)}$. Then, let $\sim$ be the smallest equivalence relation on the set of all walks in $Q$ satisfying the following conditions:
\begin{description}\setlength{\itemsep}{-3pt}
  \item[(1)] For each $\alpha:x\rightarrow y$ in $Q_1$, we have $\alpha\alpha^-\sim 1_x$ and $\alpha^-\alpha\sim 1_y$.
  \item[(2)] For each minimal relation $\sum_{i=1}^{n}\lambda_i\omega_i$ in $I$, we have $\omega_i\sim \omega_j$ for all $1\leqslant i,j\leqslant n$.
  \item[(3)] If $u,v,w$ and $w'$ are walks and $u\sim v$, we have $wuw'\sim wvw'$ whenever these compositions are defined.
\end{description}

We denote by $[w]$ the equivalence class of a walk $w$. Clearly, the product $wu$ of two walks $w$ and $u$ induces a product $[w]\cdot [u]$ of $[w]$ and $[u]$. Note that $[wu]=[w]\cdot [u]$.

For a given $x\in Q_0$, the set $\Pi _1(Q,I,x)$ of equivalence classes of all walks $w$ with $s(w)=t(w)=x$ becomes a group via the above product. Since $Q$ is connected, we can always find a walk $u$ from $x$ to $y$ for two different vertices $x$ and $y$, so that we can define an isomorphism from $\Pi _1(Q,I,x)$ to $\Pi _1(Q,I,y)$ by $[w]\longrightarrow [u]^{-1}\cdot[w]\cdot [u]$. This implies that $\Pi _1(Q,I,x)$ is independent of the choice of $x$, up to isomorphism. Then, the fundamental group of $(Q,I)$ is defined by $\Pi _1(Q,I):=\Pi _1(Q,I,x)$.

\begin{definition}(\cite[Definition 1.2]{AS-some-class}, \cite[(2.2)]{S-strongly-simply})\label{def-simply-connected}
Let $A$ be a triangular algebra.
\begin{description}\setlength{\itemsep}{-3pt}
  \item[(1)] $A$ is called simply connected if, for any presentation $A\simeq KQ/I$ as a bound quiver algebra, the fundamental group $\Pi_1(Q,I)$ is trivial.
  \item[(2)] $A$ is called strongly simply connected if every convex subcategory of $A$ is simply connected.
\end{description}
\end{definition}

We may distinguish the representation-finite cases as follows.
\begin{proposition}{\rm{(\cite[Corollary 2.8]{BG-standard form})}}\label{simply=stongly simply}
If $A$ is a representation-finite triangular algebra, $A$ is simply connected if and only if $A$ is strongly simply connected.
\end{proposition}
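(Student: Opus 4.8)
The plan is to prove the two implications separately, the first being immediate and the second carrying all the weight. Suppose first that $A$ is strongly simply connected. Since the whole algebra $A$ is a full convex subcategory of itself, Definition~\ref{def-simply-connected}(2) applied to this particular convex subcategory yields at once that $A$ is simply connected. This direction uses neither triangularity nor representation-finiteness in any essential way.

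For the converse I would assume that $A$ is representation-finite, triangular and simply connected, and show that simple connectedness descends to every convex subcategory, i.e. that each convex $B\subseteq A$ is simply connected. The first ingredient is the standard fact that a convex subcategory $B$ of a representation-finite algebra is again representation-finite: by convexity $B$ is isomorphic to $eAe$ for the idempotent $e$ attached to the vertices of $Q_B$, with $I_B=I\cap eAe$, so that $B$ sits inside the same representation-finite framework as $A$. The second ingredient is the covering theory of Bongartz and Gabriel \cite{BG}. By the multiplicative basis theorem of Bautista--Gabriel--Roiter--Salmer\'on, a representation-finite algebra admits a presentation whose admissible ideal is generated by monomial and commutativity relations; for such algebras the fundamental group $\Pi_1(Q,I)$ of Definition~\ref{def-simply-connected} is a presentation-independent invariant, and ``simply connected'' is equivalent to the triviality of the universal Galois covering $F\colon\widetilde{A}\to A$ with group $\Pi_1(A)$.

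With these tools in place, I would restrict the universal covering $F$ to the convex subcategory $B$. The point is that convexity guarantees that the part of $\widetilde{A}$ lying over the vertices of $Q_B$ decomposes as a disjoint union of copies of a single connected covering of $B$, and that no walk entirely inside $B$ is made contractible by a homotopy that leaves $Q_B$ and returns; hence this restricted covering is the universal covering of $B$. Since $A$ is simply connected we have $\widetilde{A}=A$, the restricted covering is trivial, and therefore $\Pi_1(B)$ is trivial. As $B$ was an arbitrary convex subcategory, $A$ is strongly simply connected, which completes the argument.

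The main obstacle is precisely this last step: showing that the universal covering of a convex subcategory is the restriction of the universal covering of the ambient algebra. This fails for general representation-infinite algebras, which is exactly why the two notions of Definition~\ref{def-simply-connected} genuinely differ outside the representation-finite setting, so representation-finiteness must be used in an essential way to make the standard form and the covering machinery available, while convexity is what controls the homotopy classes of walks. A clean way to organize the step is contrapositively: if some convex $B$ were not simply connected it would carry a nontrivial connected Galois covering, and convexity would allow this covering to be glued with the trivial covering over the remaining vertices of $Q_A$ to produce a nontrivial connected covering of $A$, contradicting the simple connectedness of $A$.
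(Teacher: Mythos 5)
The easy direction of your argument is fine, but the converse contains a genuine gap, and it sits exactly where you locate "the main obstacle." Your key claim --- that convexity guarantees the part of $\widetilde{A}$ over $Q_B$ is a disjoint union of copies of a covering of $B$, and that "no walk entirely inside $B$ is made contractible by a homotopy that leaves $Q_B$ and returns" --- is false as a consequence of convexity alone, and the paper's own Example~\ref{example simple}(4) is a direct counterexample. There, the full subcategory $B$ on the four vertices other than the source is convex, its quiver is an unoriented cycle of type $\widetilde{\mathbb{A}}_3$ bound by no relations, so $\Pi_1(Q_B,I_B)\simeq\mathbb{Z}$; yet the loop in $B$ becomes contractible in $A$ precisely because the two minimal relations $\alpha\beta-\gamma\delta$ and $\alpha\lambda-\gamma\mu$ identify the two halves of that loop via homotopies passing through the outside vertex. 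The same example defeats your contrapositive gluing argument: the nontrivial $\mathbb{Z}$-covering of this convex $B$ cannot be glued with trivial coverings over the remaining vertex to yield a nontrivial connected covering of $A$, since the relations through that vertex force the sheets to be identified. So "convexity controls the homotopy classes of walks" is exactly what is not true.

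You do say that representation-finiteness must be used "in an essential way," but in your argument it only appears as a license to invoke standard forms and the multiplicative basis theorem; no step of your proof would actually fail if $A$ were the (representation-infinite) algebra of Example~\ref{example simple}(4), which is why the argument cannot be correct as written. The genuine content of the statement is Bretscher--Gabriel's Corollary 2.8, whose proof goes through the structure theory of representation-finite algebras (standard forms and the universal cover/mesh category of the Auslander--Reiten quiver), not through a formal restriction-of-coverings argument; representation-finiteness enters in establishing properties of simply connected representation-finite algebras that do descend to convex subcategories, and that descent is the hard theorem, not a consequence of convexity. Note finally that the paper itself offers no proof of this proposition --- it is quoted with the citation --- so the burden your proposal assumes is precisely that theorem, and the decisive step is the one left unproven.
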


\begin{example}\label{example simple}
We have the following examples.
\begin{description}\setlength{\itemsep}{-3pt}
  \item[(1)] All tree algebras are strongly simply connected.
  \item[(2)] A hereditary algebra $KQ$ is (strongly) simply connected if and only if $Q$ is a tree.
  \item[(3)] Completely separating algebras are strongly simply connected, see \cite{Dr-complete-separating}.
  \item[(4)] Let $A:=KQ/I$ with $I:=\langle\alpha\beta-\gamma\delta, \alpha\lambda-\gamma\mu\rangle$ and the following quiver $Q$:
  \begin{center}
  $\xymatrix@C=1.1cm@R=0.1cm{\circ&&\circ\ar[ll]_{\beta}\ar[ddll]^{\lambda}&\\ &&&\circ\ar[ul]_{\alpha}\ar[dl]^{\gamma} \\ \circ &&\circ\ar[ll]^{\mu}\ar[lluu]_{\delta}&}$.
  \end{center}
  Then, $A$ is simply connected but not strongly simply connected, see \cite{A-introduction}.
\end{description}
\end{example}

We recall the separation property of a triangular algebra $A\simeq KQ/I$, which provides a sufficient condition for $A$ to be simply connected. Let $P_i$ be the indecomposable projective module at vertex $i$ and $\mathsf{rad}\ P_i$ its radical. Then, $P_i$ is said to have a separated radical (e.g., \cite[IX, Definition 4.1]{ASS}) if the supports of distinct indecomposable summands of $\mathsf{rad}\ P_i$ lie in distinct connected components of the full subquiver $Q(i)$ of $Q$ generated by the non-predecessors of $i$. We say that $A$ satisfies the separation property if every indecomposable projective $A$-module $P$ has a separated radical.

\begin{proposition}{\rm{(\cite[(2.3), (4.1)]{S-strongly-simply})}}\label{separation}
Let $A$ be a triangular algebra.
\begin{description}\setlength{\itemsep}{-3pt}
  \item[(1)] If $A$ satisfies the separation property, it is simply connected.
  \item[(2)] $A$ is strongly simply connected if and only if every convex subcategory of $A$ (or $A^\text{op}$) satisfies the separation property.
\end{description}
\end{proposition}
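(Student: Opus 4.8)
The plan is to prove part (1) by induction on the number $|Q_0|$ of vertices and then to deduce part (2) from part (1) together with the self-duality of the fundamental group. For part (1), observe first that since $A$ is triangular the quiver $Q$ is acyclic, so it has a source $a$ (a vertex with no incoming arrow). Let $A'$ be the convex subcategory of $A$ supported on $Q_0\setminus\{a\}$, with bound quiver $(Q',I')$. Because $a$ is a source, no nontrivial path ends at $a$, so for each $i\neq a$ we have $P_i^{A'}=P_i^{A}$ and $\mathsf{rad}\,P_i^{A'}=\mathsf{rad}\,P_i^{A}$, while the support of $\mathsf{rad}\,P_i$ avoids $a$ entirely. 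Moreover, deleting the single vertex $a$ can only refine the partition of $Q(i)$ into connected components, so the separated decomposition of $\mathsf{rad}\,P_i$ supplied by the separation property for $A$ is still a separated decomposition for $A'$. Hence $A'$ again satisfies the separation property, and the induction hypothesis applies to each connected component of $A'$.

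For the inductive step I would realise $\Pi_1(Q,I)$ as the fundamental group of the two-dimensional CW-complex $X$ whose $1$-skeleton is the underlying graph of $Q$ and whose $2$-cells are attached along the loops $\omega_i\omega_j^{-}$ arising from the minimal relations $\sum_i\lambda_i\omega_i$; this is exactly the presentation encoded by conditions (1)--(3) preceding Definition~\ref{def-simply-connected}. A closed walk based at $a$ then decomposes into excursions of the form $\alpha\,u\,\beta^{-}$, where $\alpha,\beta$ are arrows out of $a$ and $u$ is a walk in $Q'$; here one uses crucially that $a$ is a \emph{source}, so that every return to $a$ must be made through the formal inverse of an outgoing arrow. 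Since the indecomposable summands of $\mathsf{rad}\,P_a$ have supports in pairwise different connected components of $Q(a)=Q\setminus\{a\}$, the walk $u$ is forced to stay inside a single component $C$ containing both endpoints, and the arrows $\alpha,\beta$ both feed the \emph{unique} indecomposable summand of $\mathsf{rad}\,P_a$ supported on $C$.

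Because $C$ is simply connected by the induction hypothesis, any two walks in $C$ with prescribed endpoints are homotopic, so trivialising all excursions reduces to trivialising, for each pair of arrows $\alpha,\beta$ into $C$, a single loop $\alpha\,u_0\,\beta^{-}$. Showing that these finitely many loops are null-homotopic is the step I expect to be the main obstacle, since it requires converting the module-theoretic \emph{indecomposability} of the radical summand supported on $C$ into the \emph{homotopical triviality} of the corresponding loops: a surviving nontrivial loop would, over the simply connected $C$, yield a splitting of that summand, contradicting its indecomposability, so the minimal relations through $a$ must already identify $\alpha u_0$ with $\beta$ up to homotopy. The basic test case that pins down why indecomposability is exactly the right hypothesis is the commutative square (simply connected, with indecomposable radical at the source) against the relation-free square (not simply connected, with splitting radical). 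Granting this, every closed walk at $a$ is a product of null-homotopic excursions, whence $\Pi_1(Q,I,a)=1$ and $A$ is simply connected.

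For part (2), the implication ``$\Leftarrow$'' is then immediate: if every convex subcategory $C$ of $A$ (respectively of $A^{\text{op}}$) satisfies the separation property, then by part (1) each such $C$ is simply connected, and since reversing all arrows identifies the walks of $C$ with those of $C^{\text{op}}$ the fundamental group is self-dual, so $C$ is simply connected as soon as $C$ or $C^{\text{op}}$ has separated radicals; hence $A$ is strongly simply connected. The reverse implication ``$\Rightarrow$'' is the genuinely hard direction and the main obstacle for part (2): one must show that simple connectedness of \emph{all} convex subcategories forces each convex subcategory, or its opposite, to have separated radicals. This does not follow from the fundamental group alone and requires the finer covering-theoretic analysis of minimal non-separated configurations underlying \cite{S-strongly-simply}, which I would import rather than reprove.
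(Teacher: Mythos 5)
Your proposal must stand entirely on its own here, because the paper offers no proof of this proposition at all: it is quoted verbatim from Skowro\'nski (\cite[2.3, 4.1]{S-strongly-simply}), and the surrounding text of Section 2 treats it as imported machinery. Judged on its own, your argument for (1) has the right skeleton --- it is exactly the one-point-extension induction that the literature uses, since choosing a source $a$ realizes $A$ as $A'[\mathsf{rad}\ P_a]$ with $A'$ the restriction to $Q_0\setminus\{a\}$, and your excursion decomposition of loops at $a$ is sound --- but it collapses at the one step that carries all the content. Deducing that each loop $\alpha u_0\beta^-$ is null-homotopic from the indecomposability of the summand of $\mathsf{rad}\ P_a$ supported on $C$ is precisely the hard lemma (in the literature it is \cite[Lemma 2.3]{A-introduction}, or \cite[Theorem 3.1]{AL}, proved by a genuine analysis of how minimal relations through $a$ interact with the module structure of $\mathsf{rad}\ P_a$); your sentence ``a surviving nontrivial loop would, over the simply connected $C$, yield a splitting of that summand'' is a restatement of that lemma's contrapositive, not an argument for it. Indecomposability of $M_C$ is a statement about module homomorphisms, while nontriviality of $[\alpha u_0\beta^-]$ is a statement about the equivalence relation generated by minimal relations of $I$; nothing in your text builds the bridge between the two, and the commutative-square versus relation-free-square example only confirms that the hypothesis is the right one. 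A second, smaller gap: simple connectedness demands triviality of $\Pi_1(Q,I)$ for \emph{every} admissible presentation $I$, whereas your CW-complex argument is run for one fixed presentation; since the separation property is presentation-independent the argument could in principle be repeated for each $I$, but you never say this, and it must be said.

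For (2), the direction $\Leftarrow$ is fine modulo part (1): the identification of walks in $C$ and $C^{\mathrm{op}}$ does give $\Pi_1(Q_C,I_C)\simeq\Pi_1(Q_{C}^{\mathrm{op}},I_{C}^{\mathrm{op}})$, so separation on either side yields simple connectedness of every convex subcategory. But for the direction $\Rightarrow$ you explicitly ``import rather than reprove'' the covering-theoretic analysis of \cite{S-strongly-simply} --- that is, you cite essentially the statement being proved. So taken as a whole your proposal is an accurate road map of how the proposition is proved in the literature, with the two genuinely hard steps (the separation-to-homotopy lemma in (1), and the $\Rightarrow$ direction of (2)) left as acknowledged gaps rather than closed.
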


We recall that the one-point extension $A[M]$ of $A$ by an $A$-module $M$ is defined by
\begin{center}
$A[M]=\begin{bmatrix}
A &0\\
M&K
\end{bmatrix}$.
\end{center}
\begin{lemma}{\rm{(\cite[Lemma 2.3]{A-introduction})}}\label{one-point-extension}
If $A$ is simply connected and $M=\mathsf{rad}\ P$ is a separated radical of an indecomposable projective $A[M]$-module $P$, then $A[M]$ is simply connected.
\end{lemma}
\subsection{Critical algebras}\label{section-critical}
Let $\mathsf{mod}\ A$ be the category of finitely generated right $A$-modules and $\mathsf{proj}\ A$ the full subcategory of $\mathsf{mod}\ A$ consisting of projective $A$-modules. We denote by $\Gamma (\mathsf{mod}\ A)$ the Auslander-Reiten quiver of $A$. A connected component $C$ of $\Gamma (\mathsf{mod}\ A)$ is called preprojective if there is no oriented cycle in $C$, and any module in $C$ is of the form $\tau^{-n}(P)$ for an $n\in\mathbb{N}$ and an indecomposable projective $A$-module $P$, where $\tau$ is the Auslander-Reiten translation.

Let $|M|$ be the number of isomorphism classes of indecomposable direct summands of $M$. We recall from \cite{HR-tilted} that an $A$-module $T$ is called a tilting module if $|T|=|A|$, $\mathsf{Ext}_A^1(T,T)=0$ and the projective dimension $\mathsf{pd}_A\ T$ of $T$ is at most one. Dually, an $A$-module $T$ is called cotilting (\cite[Section 4.1]{R-tubular}) if it satisfies $\left | T \right |=\left | A \right |$, $\mathsf{Ext}_A^1(T,T)=0$ and the injective dimension $\mathsf{id}_A\ T$ is at most one. Let $A:=K\Delta$ be a hereditary algebra with a tilting $A$-module $T$, we call the endomorphism algebra $B:=\mathsf{End}_A\ T$ a tilted algebra of type $\Delta$. If moreover, $T$ is contained in a preprojective component $C$ of $\Gamma (\mathsf{mod}\ A)$, we call $B$ a concealed algebra of type $\Delta$.

\begin{definition}{\rm{(\cite[XX, Definition 2.8]{SS-text book})}}\label{minimal-infinite}
A critical algebra is one of concealed algebras of Euclidean type $\widetilde{\mathbb{D}}_n (n\geqslant 4)$, $\widetilde{\mathbb{E}}_6$, $\widetilde{\mathbb{E}}_7$ and $\widetilde{\mathbb{E}}_8$.
\end{definition}

\begin{remark}\label{critical-difference}
Note that the above definition is different from the original definition of critical algebras introduced by Bongartz in \cite{B-critical}: an algebra $A$ is called critical if $A$ is representation-infinite, but any proper convex subcategory of $A$ is representation-finite. In this paper, the critical algebras we use are actually the original critical algebras in \cite{B-critical} obtained by admissible gradings, as described in \cite[Theorem 2]{B-critical}.
\end{remark}

We mention that critical algebras have been classified completely by quiver and relations in both \cite{B-critical} and \cite{HV-tame-concealed}. We recall from \cite{HV-tame-concealed} that tame concealed algebras consist of critical algebras and the concealed algebras of Euclidean type $\widetilde{\mathbb{A}}_n$. Then, tame concealed algebras together with the generalized Kronecker algebras are precisely the minimal algebras of infinite representation type with a preprojective component. Here, an algebra $A$ is called a minimal algebra of infinite representation type if $A$ is representation-infinite, but $A/AeA$ is representation-finite for any non-zero idempotent $e$ of $A$.

We also point out that critical algebras are strongly simply connected. To show this, we first find in \cite[(1.2)]{AS-some-class} that critical algebras are simply connected. Since the quiver and relations of critical algebras are given in \cite{B-critical} (and \cite{HV-tame-concealed}), we observe that each proper convex subcategory of a critical algebra is also simply connected and hence, critical algebras are strongly simply connected (see also \cite[XX, Definition 2.8]{SS-text book}).

\subsection{Tits form}\label{tits-form}
Let $A=KQ/I$ be a triangular algebra and ${\bf{N}}:=\{0,1,2,\dots\}$. The Tits form (see \cite[Section 2]{Bo-tits-form}) $q_A: \mathbb{Z}^{Q_0}\rightarrow\mathbb{Z}$ of $A$ is the integral quadratic form defined by
\begin{center}
$q_A(v)=\sum\limits_{i\in Q_0}v_i^2-\sum\limits_{(i\rightarrow j) \in Q_1}v_iv_j+\sum\limits_{i,j\in Q_0}r(i,j)v_iv_j$,
\end{center}
where $v:=(v_i)\in \mathbb{Z}^{Q_0}$ and $r(i,j)=\left | R\cap I(i,j) \right |$ with a minimal set $R\subseteq \bigcup_{i,j\in Q_0}I(i,j)$ of generators of the admissible ideal $I$. Then, the Tits form $q_A$ is called weakly positive if $q_A(v)>0$ for any $v\neq 0$ in ${\bf{N}}^{Q_0}$, and weakly non-negative if $q_A(v)\geqslant 0$ for any $v\in {\bf{N}}^{Q_0}$.

It is well-known that the Tits form $q_A$ has a close connection with the representation type of $A$. Here, we recall the related results for (strongly) simply connected algebras.
\begin{proposition}{\rm{(\cite[XX, Theorem 2.9, 2.10]{SS-text book})}}\label{tits-form}
Let $A$ be a simply connected algebra.
\begin{description}\setlength{\itemsep}{-3pt}
  \item[(1)] $A$ is representation-finite if and only if the Tits form $q_A$ is weakly positive, or equivalently, if and only if $A$ does not contain a critical\footnote{As we mentioned in Remark \ref{critical-difference}, the definition of critical algebras used here comes from \cite[XX, Definition 2.8]{SS-text book}, but not from \cite{B-critical}.} convex subcategory.
  \item[(2)] If $A$ is strongly simply connected, $A$ is (infinite-)tame if and only if the Tits form $q_A$ is weakly non-negative, but not weakly positive.
\end{description}
\end{proposition}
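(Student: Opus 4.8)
The plan is to prove (1) and (2) by relating three things --- the combinatorics of the unit form $q_A$, the convex-subcategory structure of $A$, and the representation type --- with the classification of critical algebras recalled in Section \ref{section-critical} serving as the bridge. First I would show, purely at the level of integral quadratic forms, that $q_A$ is weakly positive if and only if $A$ contains no critical convex subcategory. The key observation is that for a triangular algebra $q_A$ is a unit form, and restricting it to the coordinates supported on a convex subcategory $B$ returns exactly $q_B$: convexity guarantees that every arrow and every minimal relation between vertices of $B$ already lies in $B$, so the arrow and relation terms agree. A critical algebra is concealed of Euclidean type, so its Tits form is $\mathbb{Z}$-equivalent to the positive semidefinite unit form of the corresponding Euclidean diagram and admits a sincere positive vector $v$ with $q_B(v)=0$; hence a critical convex subcategory immediately violates weak positivity of $q_A$. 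For the converse I would invoke Ovsienko's classification of the minimal non-weakly-positive (``critical'') unit forms and, using that $A$ is simply connected, match such a critical restriction of $q_A$ to an honest critical convex subcategory.

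Next I would relate the absence of critical convex subcategories to representation-finiteness. One direction is easy in contrapositive form: a critical convex subcategory $B$ is representation-infinite, and since convexity identifies $B$ with the quotient $A/A(1-e)A$, every indecomposable $B$-module is an indecomposable $A$-module, so $A$ is representation-infinite. The reverse implication is Bongartz's criterion and is the principal obstacle. The heuristic is that a representation-infinite simply connected algebra must contain a convex subcategory that is minimal representation-infinite and carries a preprojective component; by the classification recalled in Section \ref{section-critical} such a minimal algebra is tame concealed or a generalized Kronecker algebra, and since the generalized Kronecker algebras and the concealed algebras of type $\widetilde{\mathbb{A}}_n$ are not simply connected, simple connectedness leaves only the critical algebras of type $\widetilde{\mathbb{D}}$ and $\widetilde{\mathbb{E}}$. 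Turning this heuristic into a proof --- producing the preprojective component and controlling the fundamental group of the subcategory --- is exactly where covering theory enters, and I expect it to be the hardest part of (1).

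For (2) I would combine (1) with the tame--wild dichotomy. Weak positivity already captures representation-finiteness, so it remains to separate tame from wild among the representation-infinite strongly simply connected algebras by means of weak non-negativity. If $q_A$ takes a negative value on some non-negative vector, I would extract a hypercritical (or pg-critical) convex subcategory and conclude wildness; here the hypothesis of strong simple connectedness is essential, since by Proposition \ref{separation} every convex subcategory of $A$ and of $A^{\text{op}}$ satisfies the separation property, which is what lets one realize the form data by genuine subcategories. Conversely, weak non-negativity that is not weak positivity should force infinite tameness; this is the deep theorem of de la Pe\~na and Skowro\'nski, resting on the classification of pg-critical and hypercritical strongly simply connected algebras, and I expect this tame--wild step to be the main obstacle of the proposition as a whole.
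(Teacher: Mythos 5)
The first thing to note is that the paper does not prove this proposition at all: it is quoted, with attribution, from Simson--Skowro\'nski's textbook \cite[XX, Theorem 2.9, 2.10]{SS-text book}, where it packages Bongartz's criterion for representation-finiteness of simply connected algebras together with the de la Pe\~na--Skowro\'nski-type characterization of tameness for strongly simply connected algebras. So there is no internal argument to compare yours against; the only question is whether your sketch stands on its own. It does not, and you essentially concede this yourself: at the two decisive junctures --- producing a critical convex subcategory (equivalently, a minimal representation-infinite convex subcategory with a preprojective component) inside a representation-infinite simply connected algebra, and showing that weak non-negativity forces tameness in the strongly simply connected case --- you appeal to precisely the deep theorems that the citation stands for. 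The surrounding reductions you supply are correct and are indeed the easy parts of the literature argument: restriction of $q_A$ to a convex subcategory $B$ recovers $q_B$ (convexity guarantees that minimal relations between vertices of $B$ lie in $B$); a critical convex subcategory carries a sincere positive isotropic vector and so destroys weak positivity; and zero-extension along a convex subcategory (where indeed $eAe\simeq A/A(1-e)A$ because convexity kills $eA(1-e)Ae$) embeds $\mathsf{mod}\ B$ into $\mathsf{mod}\ A$, so a critical convex subcategory forces representation-infiniteness.

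There is also one concrete error. In part (2) you propose, when $q_A$ fails weak non-negativity, to ``extract a hypercritical (or pg-critical) convex subcategory and conclude wildness.'' A pg-critical convex subcategory does not yield wildness: pg-critical algebras are by definition \emph{tame} (they are tame simply connected algebras of non-polynomial growth), and their Tits forms are weakly non-negative; they are relevant to the polynomial-growth stratification of tame algebras, not to the tame/wild dichotomy. The correct statement, for $A$ strongly simply connected, is that $q_A$ fails to be weakly non-negative if and only if $A$ contains a \emph{hypercritical} convex subcategory, and those algebras are wild. Dropping the parenthetical ``or pg-critical'' repairs this step; the rest of your outline for (2) then matches the structure of the cited result, with the hard direction (weakly non-negative but not weakly positive implies infinite tame) again being exactly the theorem being cited rather than something your sketch establishes.
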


\section{Simply connected algebras}
In this section, we first show that a $\tau$-tilting finite simply connected algebra is representation-finite. Then, we prove that the $\tau$-tilting finiteness of a non-sincere algebra can be reduced to that of a sincere algebra. Moreover, we get a complete list of $\tau$-tilting finite sincere simply connected algebras. Lastly, we determine the $\tau$-tilting finiteness of several algebras, which are related to the representation theory of (strongly) simply connected algebras. We need the following fundamental lemmas.
\begin{lemma}{\rm{(\cite[Theorem 5.12]{AIRRT}, \cite[Theorem 4.2]{DIJ-tau-tilting-finite})}}\label{quotient and idempotent}
If $A$ is $\tau$-tilting finite,
\begin{description}\setlength{\itemsep}{-3pt}
  \item[(1)] the quotient algebra $A/I$ is $\tau$-tilting finite for any two-sided ideal $I$ of $A$,
  \item[(2)] the idempotent truncation $eAe$ is $\tau$-tilting finite for any idempotent $e$ of $A$.
\end{description}
\end{lemma}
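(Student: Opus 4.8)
The plan is to reduce both statements to the characterisation of $\tau$-tilting finiteness recorded in \cite{DIJ-tau-tilting-finite}, namely that an algebra is $\tau$-tilting finite if and only if it admits only finitely many isomorphism classes of bricks (modules $S$ with $\mathsf{End}(S)=K$). Granting this, it suffices in each case to exhibit an injection from the isomorphism classes of bricks of the smaller algebra into those of $A$; finiteness then descends automatically. Throughout I work with right modules.

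For part (1), I would use the inflation functor $\mathsf{mod}\ A/I\to \mathsf{mod}\ A$ along the projection $A\to A/I$. Its image is the full subcategory of $A$-modules annihilated by $I$, and it is fully faithful: for $A$-modules $M,N$ killed by $I$, every $A$-linear map is automatically $A/I$-linear, so $\mathsf{Hom}_{A/I}(M,N)=\mathsf{Hom}_A(M,N)$. In particular a brick $S$ of $A/I$ satisfies $\mathsf{End}_A(S)=\mathsf{End}_{A/I}(S)=K$, so it is a brick of $A$, and non-isomorphic bricks stay non-isomorphic. Thus the bricks of $A/I$ form a subset of the bricks of $A$, and (1) follows at once.

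For part (2), the subtlety is that $eAe$ is a corner of $A$ rather than a quotient, so the module categories are no longer related by a full embedding; this is where the real work lies. I would instead invoke the recollement attached to the idempotent $e$, whose quotient functor is the Schur functor $j^{*}=(-)e\colon \mathsf{mod}\ A\to\mathsf{mod}\ eAe$, with fully faithful right adjoint $j_{*}=\mathsf{Hom}_{eAe}(Ae,-)$ satisfying $j^{*}j_{*}\cong \mathrm{id}$. Given a brick $N$ of $eAe$, the adjunction $j^{*}\dashv j_{*}$ gives $\mathsf{End}_A(j_{*}N)\cong \mathsf{Hom}_{eAe}(j^{*}j_{*}N,N)\cong \mathsf{End}_{eAe}(N)=K$, so $j_{*}N$ is a brick of $A$; and $j^{*}j_{*}\cong\mathrm{id}$ shows that $N\mapsto j_{*}N$ is injective on isomorphism classes. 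Hence the bricks of $eAe$ inject into those of $A$, and (2) follows as before.

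The main obstacle is precisely the verification in (2) that the adjoint $j_{*}$ preserves brickness and is injective, i.e. the recollement identities $j^{*}j_{*}\cong\mathrm{id}$ and $\mathsf{End}_A(j_{*}N)\cong\mathsf{End}_{eAe}(N)$; once these standard facts are in hand, both parts become uniform. Alternatively, one could phrase the whole argument in terms of functorially finite torsion classes, counting those instead of bricks, but the brick formulation keeps the two reductions parallel and is the most economical route.
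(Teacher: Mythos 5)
Your proof is correct, but there is nothing in the paper to compare it against: the paper does not prove this lemma at all, it simply quotes it from the literature, attributing it to \cite[Theorem 5.12]{AIRRT} and \cite[Theorem 4.2]{DIJ-tau-tilting-finite}. Measured against those sources, your argument is a clean, self-contained alternative in the same circle of ideas. Both of your reductions rest on the Demonet--Iyama--Jasso theorem that $\tau$-tilting finiteness is equivalent to finiteness of the set of bricks up to isomorphism; granting that, your part (1) is the standard full-embedding argument, and your part (2) is correct as written: the adjunction $j^{*}\dashv j_{*}$ together with the counit isomorphism $j^{*}j_{*}\cong\mathrm{id}$ gives $\mathsf{End}_A(j_{*}N)\cong\mathsf{End}_{eAe}(N)=K$ and injectivity on isomorphism classes, so the bricks of $eAe$ inject into the bricks of $A$ (one could equally use the left adjoint $-\otimes_{eAe}eA$, whose unit is an isomorphism). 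Two minor points are worth making explicit: one should remark that $j_{*}N=\mathsf{Hom}_{eAe}(Ae,N)$ is finite dimensional, hence lies in $\mathsf{mod}\ A$; and your argument consumes both directions of the brick characterization (the forward direction for $A$, the converse for $A/I$ and $eAe$), so it is not more elementary than the cited proofs --- indeed \cite{AIRRT} obtains the quotient statement by lattice-theoretic methods on torsion classes without invoking bricks --- but as a blind reconstruction it is complete and sound.
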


It is worth mentioning that if we consider $A$ as the $K$-category mentioned at the beginning of Section 2, then a convex subcategory $B$ of $A$ is actually a certain idempotent truncation of $A$. Thus, $B$ is $\tau$-tilting finite if $A$ is $\tau$-tilting finite.

\begin{lemma}{\rm{(\cite[Remark 2.9]{Mousavand-biserial alg})}}\label{crucial}
Let $A$ be an algebra with a preprojective component. Then, $A$ is $\tau$-tilting finite if and only if it is representation-finite.
\end{lemma}

\begin{lemma}\label{critical}
Any critical algebra $A$ is $\tau$-tilting infinite.
\end{lemma}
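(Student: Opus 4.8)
The plan is to identify a critical algebra as a representation-infinite \emph{tilted} algebra, and then invoke the already-known coincidence of $\tau$-tilting finiteness and representation-finiteness for tilted algebras.

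First I would unwind the definition. By Definition~\ref{minimal-infinite}, a critical algebra $A$ is a concealed algebra of Euclidean type $\widetilde{\mathbb{D}}_n\ (n\geqslant 4)$, $\widetilde{\mathbb{E}}_6$, $\widetilde{\mathbb{E}}_7$ or $\widetilde{\mathbb{E}}_8$. By the very definition of a concealed algebra recalled in Section~\ref{section-critical}, this means $A\simeq \mathsf{End}_H\ T$ for a hereditary algebra $H=K\Delta$ of the corresponding Euclidean type $\Delta$ and a tilting $H$-module $T$ contained in a preprojective component of $\Gamma(\mathsf{mod}\ H)$. In particular, $A$ is a tilted algebra of type $\Delta$.

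Next I would record that $A$ is representation-infinite. As recalled in Section~\ref{section-critical}, critical algebras form a subclass of the tame concealed algebras, and tame concealed algebras are minimal algebras of infinite representation type; in particular $A$ is representation-infinite (this also reflects that $\Delta$ is a non-Dynkin, hence representation-infinite, diagram). Combining the two observations, $A$ is a representation-infinite tilted algebra, so the result of Zhang \cite{Z-tilted}, which asserts that $\tau$-tilting finiteness and representation-finiteness coincide for tilted algebras, applies. Since $A$ is representation-infinite, it cannot be $\tau$-tilting finite, i.e. $A$ is $\tau$-tilting infinite.

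The only genuine content lies in the bookkeeping of the first step: verifying that ``critical $=$ concealed of Euclidean type'' really does place $A$ inside the class of tilted algebras covered by \cite{Z-tilted}, and that representation-infiniteness is retained. I do not expect a real obstacle here. If one instead wanted a self-contained argument avoiding \cite{Z-tilted}, the alternative would be to exhibit infinitely many pairwise non-isomorphic indecomposable $\tau$-rigid modules directly inside the infinite preprojective component that $A$ admits (by the discussion in Section~\ref{section-critical}); there the delicate point is checking $\mathsf{Hom}_A(M,\tau M)=0$ for these preprojective modules $M$, which is precisely the computation the tilted-algebra route lets us bypass.
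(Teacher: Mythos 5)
Your proposal is correct, but it takes a genuinely different route from the paper. The paper gives a self-contained argument: it uses the fact (recalled in Subsection~\ref{section-critical}) that a critical algebra has a preprojective component $C_A$, invokes the vanishing of $\mathsf{rad}^\infty$ between modules in such a component to conclude that every indecomposable module in $C_A$ is $\tau$-rigid, notes that $C_A$ is infinite because $A$ is representation-infinite, and then applies the criterion of Demonet--Iyama--Jasso (infinitely many indecomposable $\tau$-rigid modules implies $\tau$-tilting infinite). This is exactly the ``delicate point'' you flag at the end of your proposal, and it is not delicate at all once one has the standard fact about preprojective components; it also yields the more general statement recorded in the remark following the lemma, namely that any algebra with a preprojective component is $\tau$-tilting finite if and only if it is representation-finite. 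Your route instead observes that a concealed algebra of Euclidean type is by definition a representation-infinite tilted algebra and outsources the conclusion to the result cited as \cite{Z-tilted}. This is logically sound and shorter, with two caveats: the author of \cite{Z-tilted} is Zito, not Zhang, and the bibliographic title of that reference mentions only \emph{cluster-tilted} algebras, so you should verify that the tilted case is actually covered there (the paper's introduction does attribute both cases to it); moreover, the proof of that external theorem itself proceeds via essentially the same preprojective-component argument, so your approach trades self-containedness for brevity rather than for a genuinely simpler mechanism.
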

\begin{proof}
In our setting, a critical algebra is actually a representation-infinite concealed algebra. Such an algebra always admits a preprojective component in its Auslander-Reiten quiver, see \cite[VIII. Theorem 4.5]{ASS}. The statement follows from Lemma \ref{crucial}.
\end{proof}

Now, we are able to give the first main result of this section.
\begin{theorem}\label{result-strongly simply}
Let $A$ be a simply connected algebra. Then, $A$ is $\tau$-tilting finite if and only if $A$ is representation-finite.
\end{theorem}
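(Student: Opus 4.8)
The plan is to establish the equivalence by treating the two implications separately, with essentially all the work concentrated in one short reduction. The backward direction is immediate and already noted in the introduction: a representation-finite algebra has only finitely many indecomposable modules, hence only finitely many indecomposable $\tau$-rigid modules, and is therefore $\tau$-tilting finite. So the substance lies in proving that a $\tau$-tilting finite simply connected algebra must be representation-finite, which I would do by contraposition.

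Assume $A$ is \emph{not} representation-finite. Since $A$ is simply connected, Proposition \ref{tits-form}(1) gives that its Tits form fails to be weakly positive, equivalently that $A$ contains a critical convex subcategory $B$. The crucial bridge is to recognize $B$ as an idempotent truncation of $A$: if $B$ is supported on the vertex set $S \subseteq Q_0$ and we set $e := \sum_{i\in S} e_i$, then $B \simeq eAe$. Indeed, $eAe$ consists precisely of the morphisms of $A$ between vertices of $S$, and convexity of $S$ ensures that every such morphism is a combination of paths lying entirely inside $Q_B$, so that $eAe = KQ_B/(I\cap KQ_B) \simeq B$ with no extra morphisms routed through vertices outside $S$.

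Once $B \simeq eAe$ is established, the contradiction is formal. By Lemma \ref{critical}, the critical algebra $B$ is $\tau$-tilting infinite. Were $A$ $\tau$-tilting finite, Lemma \ref{quotient and idempotent}(2) would force the idempotent truncation $eAe \simeq B$ to be $\tau$-tilting finite as well, which is impossible. Hence $A$ is $\tau$-tilting infinite, and the contrapositive --- $\tau$-tilting finite implies representation-finite --- is proved.

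I expect the only genuinely delicate point to be the identification $B \simeq eAe$, because Proposition \ref{tits-form} is phrased via convex subcategories whereas Lemma \ref{quotient and idempotent}(2) is phrased via idempotents; one must use convexity to rule out that $eAe$ acquires additional morphisms passing through vertices of $Q \setminus S$. Everything downstream of that identification is a direct combination of the two preliminary lemmas with the Tits-form criterion, so no further computation should be needed.
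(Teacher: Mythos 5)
Your proposal is correct and follows essentially the same route as the paper's proof: the backward direction is dismissed as obvious, and the forward direction combines Proposition \ref{tits-form} (no critical convex subcategory), Lemma \ref{critical} (critical algebras are $\tau$-tilting infinite), and Lemma \ref{quotient and idempotent}(2) (idempotent truncations inherit $\tau$-tilting finiteness), exactly as the paper does. Your extra care in identifying the convex subcategory $B$ with the idempotent truncation $eAe$ is a point the paper leaves implicit, and it is sound --- indeed, since $B$ is by definition a \emph{full} subcategory, $B\simeq eAe$ holds automatically, with convexity guaranteeing that $eAe$ is presented by $(Q_B, I\cap KQ_B)$.
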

\begin{proof}
If $A$ is representation-finite, it is obvious that $A$ is $\tau$-tilting finite. If $A$ is $\tau$-tilting finite, it cannot have a critical algebra as a convex subcategory by Lemma \ref{quotient and idempotent} and Lemma \ref{critical}. Then, $A$ is representation-finite following Proposition \ref{tits-form} (1).
\end{proof}

As we mentioned in Section 2, we have the following immediate results.
\begin{corollary}
Assume that $A$ is a simply connected algebra. If $A$ is not strongly simply connected, it is $\tau$-tilting infinite.
\end{corollary}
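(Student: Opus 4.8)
The plan is to argue by contradiction, combining Theorem~\ref{result-strongly simply} with Proposition~\ref{simply=stongly simply}. Suppose, contrary to the claim, that $A$ is $\tau$-tilting finite. Since $A$ is assumed to be simply connected, Theorem~\ref{result-strongly simply} applies directly and yields that $A$ is representation-finite. This is the one substantive input; everything else is bookkeeping on the hypotheses.

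Next I would invoke Proposition~\ref{simply=stongly simply}. Recall that every simply connected algebra is by definition triangular (see Definition~\ref{def-simply-connected}), so the hypotheses of that proposition --- namely that $A$ be a representation-finite triangular algebra --- are now all met: triangularity comes for free, and representation-finiteness was just obtained. The proposition then states that, for such an algebra, being simply connected is equivalent to being strongly simply connected. As $A$ is simply connected, it must therefore be strongly simply connected, which contradicts the standing assumption that $A$ is not strongly simply connected. Hence the supposition fails and $A$ is $\tau$-tilting infinite, as required.

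There is no genuine obstacle here, since the statement is a purely formal consequence of the two cited results; the only point deserving a moment of care is verifying that the triangularity hypothesis of Proposition~\ref{simply=stongly simply} is automatically satisfied, which it is because simple connectedness is only defined for triangular algebras. Equivalently, one may phrase the argument contrapositively: a $\tau$-tilting finite simply connected algebra is representation-finite by Theorem~\ref{result-strongly simply} and hence strongly simply connected by Proposition~\ref{simply=stongly simply}, so any simply connected algebra that fails to be strongly simply connected cannot be $\tau$-tilting finite.
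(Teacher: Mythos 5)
Your proposal is correct and uses exactly the same two ingredients as the paper, namely Theorem~\ref{result-strongly simply} and Proposition~\ref{simply=stongly simply}; the paper simply runs the argument in the direct (contrapositive) direction --- such an $A$ must be representation-infinite by Proposition~\ref{simply=stongly simply}, hence $\tau$-tilting infinite --- whereas you phrase it as a proof by contradiction, which is the same argument. The contrapositive reformulation you give in your final paragraph is essentially verbatim the paper's proof.
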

\begin{proof}
By Proposition \ref{simply=stongly simply}, such an algebra $A$ must be representation-infinite.
\end{proof}

\begin{corollary}
Let $A$ be a simply connected algebra and $B=A[M]$ the one-point extension with a separated radical $M=\mathsf{rad}\ P$ for an indecomposable projective $B$-module $P$. Then, $B$ is $\tau$-tilting finite if and only if $B$ is representation-finite.
\end{corollary}
\begin{proof}
This follows from the fact that $B$ is simply connected, see Lemma \ref{one-point-extension}.
\end{proof}

\begin{remark}
Let $A$ be a simply connected algebra. By Proposition \ref{tits-form} and Theorem \ref{result-strongly simply}, $A$ is $\tau$-tilting finite if and only if the Tits form $q_A$ is weakly positive, or equivalently, if and only if $A$ does not contain a critical convex subcategory.
\end{remark}

Next, we consider non-sincere and sincere algebras. Let $\left \{ e_1,e_2,\ldots,e_n \right \}$ be a complete set of pairwise orthogonal primitive idempotents of $A$. Then, we have
\begin{theorem}\label{sincere-alge}
A non-sincere algebra $A$ is $\tau$-tilting finite if and only if $A/Ae_iA$ is $\tau$-tilting finite for any $1\leqslant i\leqslant n$, if and only if each sincere quotient $A/AeA$ is $\tau$-tilting finite for an idempotent $e$ of $A$.
\end{theorem}
\begin{proof}
If $A$ is $\tau$-tilting finite, $A/Ae_iA$ is also $\tau$-tilting finite following Lemma \ref{quotient and idempotent} (1).

We assume that any $A/Ae_iA$ is $\tau$-tilting finite. Since $A$ is a non-sincere algebra, for any indecomposable $A$-module $M$, there exists at least one $e_i$ such that $Me_i=0$ and we may denote $B_i:=A/Ae_iA$. Then, for any indecomposable $\tau$-rigid $A$-module $M$, one can always find a suitable $i$ such that $M$ becomes an indecomposable $\tau$-rigid $B_i$-module. Besides, the number of indecomposable $\tau$-rigid $B_i$-modules is finite following the definition of $\tau$-tilting finite. Hence, $A$ is also $\tau$-tilting finite.
\end{proof}

According to the result above, the study for $\tau$-tilting finiteness of non-sincere algebras reduces to that of sincere algebras. We may apply this to simply connected algebras.

Let $A$ be a representation-finite sincere simply connected algebra. In \cite{Bo-sincere-simply}, Bongartz introduced a list of 24 infinite families containing all possible $A$'s with $\left | A \right |\geqslant 72$. Then, the bound is refined to $\left | A \right |\geqslant 14$ in Ringel's book \cite[Section 6]{R-tubular}. Hereafter, Bongartz determined the algebras with $\left | A \right |\leqslant 13$ by the graded tree introduced in \cite{BG}. Last, Rogat and Tesche \cite{RT-sincere-simply} gave a list of all possible $A$'s by their Gabriel quiver and relations.
\begin{remark}\label{sincere-simply-connected}
By Theorem \ref{result-strongly simply}, the list in \cite{RT-sincere-simply} provides a complete list of $\tau$-tilting finite sincere simply connected algebras.
\end{remark}

\subsection{Some applications}
We first consider the class of triangular matrix algebras. We denote by $\mathcal{T}_2(A)$ the algebra of $2\times 2$ upper triangular matrices $\bigl(\begin{smallmatrix}
A & A \\
0& A
\end{smallmatrix}\bigr)$ over an algebra $A$. Then, the category $\mathsf{mod}\ \mathcal{T}_2(A)$ is equivalent to the category whose objects are $A$-homomorphisms $f: M\rightarrow N$ between $A$-modules $M$ and $N$, and morphisms are pairs of homomorphisms making the obvious squares commutative. This reminds us that the category $\mathsf{mod}\ \mathcal{T}_2(A)$ is closely connected with the module category of the Auslander algebra of $A$.

Let $A$ be a representation-finite algebra and $\left \{ M_1,M_2,\dots,M_s \right \}$ a complete set of representatives of the isomorphism classes of indecomposable $A$-modules, the Auslander algebra of $A$ is defined as $\mathsf{End}_A\left ( \oplus_{i=1}^s M_i  \right )$. Then, we have the following result. During the preparation of this paper, we notice that the equivalence $(1)\Leftrightarrow (3)$ is directly proved in \cite[Theorem 3.1]{Aihara-Honma} by the $\tau$-rigid-brick correspondence.

\begin{proposition}\label{result-triangular-matrix}
Let $A$ be a representation-finite simply connected algebra and $B:=\mathsf{End}_A\left ( \oplus_{i=1}^s M_i  \right )$ its Auslander algebra, the following conditions are equivalent.
\begin{description}\setlength{\itemsep}{-3pt}
  \item[(1)] $B$ is $\tau$-tilting finite.
  \item[(2)] $B$ is representation-finite.
  \item[(3)] $\mathcal{T}_2(A)$ is $\tau$-tilting finite.
  \item[(4)] $\mathcal{T}_2(A)$ is representation-finite.
\end{description}
\end{proposition}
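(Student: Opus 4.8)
The plan is to obtain the four equivalences from two inputs: Theorem \ref{result-strongly simply}, which for a simply connected algebra identifies $\tau$-tilting finiteness with representation-finiteness, and a direct comparison of the representation types of $B$ and $\mathcal{T}_2(A)$. I would prove that both $B$ and $\mathcal{T}_2(A)$ are simply connected; Theorem \ref{result-strongly simply} then gives $(1)\Leftrightarrow(2)$ and $(3)\Leftrightarrow(4)$ simultaneously, and it remains only to link the two representation-finiteness statements $(2)$ and $(4)$. The implications $(2)\Rightarrow(1)$ and $(4)\Rightarrow(3)$ require nothing, since any representation-finite algebra is $\tau$-tilting finite.

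First I would handle $\mathcal{T}_2(A)$. Writing $A\simeq KQ_A/I$, the Gabriel quiver of $\mathcal{T}_2(A)$ is formed by two copies of $Q_A$ joined by one layer of arrows between corresponding vertices, bound by the relations of $I$ on each copy together with the commutativity relations of the connecting squares; since $Q_A$ has no loops or oriented cycles, neither does this quiver, so $\mathcal{T}_2(A)$ is triangular. To see it is simply connected I would invoke the separation property of Proposition \ref{separation}: as $A$ is representation-finite simply connected, it is strongly simply connected by Proposition \ref{simply=stongly simply} and hence satisfies the separation property, and I would check that this property is inherited by $\mathcal{T}_2(A)$, using the separation-preserving behaviour of one-point extensions recorded after Proposition \ref{separation} (cf. \cite{AL, A-introduction}). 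By Theorem \ref{result-strongly simply} this yields $(3)\Leftrightarrow(4)$.

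For the Auslander algebra I would use that a representation-finite simply connected algebra is representation-directed, so its Auslander--Reiten quiver has no oriented cycles. Consequently the Gabriel quiver of $B=\mathsf{End}_A(\bigoplus_{i=1}^{s} M_i)$ is exactly this quiver, so $B$ is triangular, with relations generated by the mesh relations coming from the almost split sequences in $\mathsf{mod}\,A$. Verifying the separation property for $B$ would then show that $B$ is simply connected, and Theorem \ref{result-strongly simply} would give $(1)\Leftrightarrow(2)$. The remaining equivalence $(2)\Leftrightarrow(4)$ concerns representation type only: realizing $\mathsf{mod}\,\mathcal{T}_2(A)$ as the morphism category of $\mathsf{mod}\,A$ and using its close connection with $\mathsf{mod}\,B$, the representation-finiteness of $\mathcal{T}_2(A)$ is equivalent to that of $B$, a fact I would quote from the structure theory of these morphism categories rather than reprove.

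The step I expect to be the main obstacle is the simple connectedness of the Auslander algebra $B$. Unlike $A$ and $\mathcal{T}_2(A)$, the algebra $B$ is not given by tree-like or manifestly separated data, so checking the separation property for every indecomposable projective $B$-module --- equivalently, the triviality of the fundamental group of the Auslander--Reiten quiver bound by its mesh relations --- is the delicate point underlying $(1)\Leftrightarrow(2)$. Should this route resist, the fallback is to drop $(1)\Leftrightarrow(2)$ as an independent link and instead compare the $\tau$-tilting behaviour of $B$ and $\mathcal{T}_2(A)$ directly, combining such a comparison with the already established $(2)\Leftrightarrow(4)$ and $(3)\Leftrightarrow(4)$.
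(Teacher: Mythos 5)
Your architecture splits into three links: $(1)\Leftrightarrow(2)$ via simple connectedness of the Auslander algebra $B$, $(3)\Leftrightarrow(4)$ via simple connectedness of $\mathcal{T}_2(A)$, and $(2)\Leftrightarrow(4)$ quoted from the theory of morphism categories. The last of these matches the paper exactly (it cites \cite{AR} or \cite[VI, Proposition 5.8]{ARS}). The genuine gap is the link $(1)\Leftrightarrow(2)$: the statement you need --- that the Auslander algebra of a (representation-finite) simply connected algebra is again simply connected --- is not a routine verification but is precisely the content of the theorem of Assem and Brown \cite{AB-auslander-algebra}, which is what the paper invokes at this point. Checking the separation property for the projectives of $B$ means analyzing the radicals of the mesh-category projectives of the Auslander--Reiten quiver, and this is the substance of that published paper, not a step one can leave as ``to be verified''; you flag it yourself as the main obstacle, and your fallback (comparing the $\tau$-tilting behaviour of $B$ and $\mathcal{T}_2(A)$ directly) supplies no mechanism for doing so --- note that $(1)\Leftarrow(2)$ is trivial, so the entire burden is $(1)\Rightarrow(2)$, i.e.\ ruling out a representation-infinite but $\tau$-tilting finite Auslander algebra, and nothing in your outline addresses this without the simple connectedness of $B$.

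By contrast, your route for $(3)\Leftrightarrow(4)$ is a genuine and workable alternative to the paper's argument. The paper never proves $\mathcal{T}_2(A)$ is simply connected; instead it uses that $A$ is standard (\cite[Corollary 3.9]{MP-universal cover}) and the criterion of \cite[Theorem 4]{LS-triangular-matrix}, and then excludes tame concealed convex subcategories using Lemmas \ref{quotient and idempotent} and \ref{critical} plus a preprojective-component argument for type $\widetilde{\mathbb{A}}_m$. Your separation-property route can be completed as follows: the bottom copy of $A$ satisfies the separation property (it is strongly simply connected by Proposition \ref{simply=stongly simply} and Proposition \ref{separation}); adding the top-copy vertices one at a time in reverse topological order, each step is a one-point extension by the module $\left(\mathsf{rad}\ P_i \hookrightarrow P_i\right)$, which is indecomposable (any decomposition forces a summand of the target $P_i$ to vanish, and injectivity then kills the corresponding source summand), hence separated, so \cite[Theorem 3.1]{AL} propagates the separation property to $\mathcal{T}_2(A)$, and Theorem \ref{result-strongly simply} gives $(3)\Leftrightarrow(4)$. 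One caution: aim only at simple connectedness, not strong simple connectedness, since $\mathcal{T}_2(A)$ can fail to be strongly simply connected even here --- for $A$ the commutative square, $\mathcal{T}_2(A)$ is the commutative cube, whose six middle vertices form a convex subcategory that is hereditary of type $\widetilde{\mathbb{A}}_5$. This is harmless for Theorem \ref{result-strongly simply}, but it shows the inheritance must be argued through the separation property as above rather than through convex subcategories.
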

\begin{proof}
It follows from \cite[Theorem]{AB-auslander-algebra} that the Auslander algebra of $A$ is simply connected if and only if $A$ is simply connected. Therefore, $B$ is simply connected and $(1)\Leftrightarrow (2)$ follows from Theorem \ref{result-strongly simply}. It is known from \cite{LS-triangular-matrix} that $\mathcal{T}_2(A)\simeq \mathcal{T}_2(K)\otimes_K A$. Then, $\mathcal{T}_2(A)$ is simply connected if and only if $A$ is simply connected (see \cite{LS-tensor-product}). Hence, $(3)\Leftrightarrow (4)$ also follows from Theorem \ref{result-strongly simply}. Lastly, $(2)\Leftrightarrow (4)$ follows from \cite[Theorem 1.1]{AR} or \cite[VI, Proposition 5.8]{ARS}.
\end{proof}

We point out that if $A$ is a representation-infinite simply connected algebra, $\mathcal{T}_2(A)$ is $\tau$-tilting infinite. In fact, it is easy to check that $\mathcal{T}_2(A)$ has $A$ as a convex subcategory.

The second class of algebras we consider is the class of iterated tilted algebras. We recall that two algebras $A$ and $B$ are said to be tilting-cotilting equivalent if there exists a sequence of algebras $A=A_0,A_1,\dots,A_m=B$ and a sequence of modules $T_{A_i}^i$ with $0\leqslant i\leqslant m$ such that $A_{i+1}=\mathsf{End}_{A_i}\ T_{A_i}^i$ and $T_{A_i}^i$ is either a tilting or a cotilting module. Let $K\Delta$ be a hereditary algebra, $A$ is called iterated tilted of type $\Delta$ (see \cite[(1.4)]{AH-generalized-tilted} and \cite[Theorem 3]{HRS-iterated-tilted}) if $A$ is tilting-cotilting equivalent to $K\Delta$.

\begin{proposition}\label{result-iterated-tilted}
Let $A$ be an iterated tilted algebra of Dynkin type or types $\widetilde{\mathbb{D}}_n, \widetilde{\mathbb{E}}_p, (n\geqslant 4, p=6,7\ \text{or}\ 8)$. Then, $A$ is $\tau$-tilting finite if and only if it is representation-finite.
\end{proposition}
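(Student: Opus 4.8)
The plan is to reduce everything to the first main theorem (Theorem \ref{result-strongly simply}) wherever the algebra is simply connected, and to fall back on a direct representation-finiteness argument in the remaining (Dynkin) cases. The key observation is that the types listed, namely $\widetilde{\mathbb{D}}_n$, $\widetilde{\mathbb{E}}_6$, $\widetilde{\mathbb{E}}_7$, $\widetilde{\mathbb{E}}_8$ and the Dynkin diagrams, are precisely the tree-shaped Euclidean and Dynkin graphs; the cyclic type $\widetilde{\mathbb{A}}$ is deliberately excluded because its iterated tilted algebras need not be triangular. I would therefore split the proof into two cases according to whether the type is Euclidean or Dynkin.

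For an iterated tilted algebra $A$ of type $\widetilde{\mathbb{D}}_n$ or $\widetilde{\mathbb{E}}_p$ $(p=6,7,8)$, I would use that such algebras are simply connected, a fact already recorded among the basic examples of simply connected algebras in the introduction. Once simple connectedness is in hand, Theorem \ref{result-strongly simply} applies verbatim and yields that $A$ is $\tau$-tilting finite if and only if it is representation-finite. Equivalently, via Proposition \ref{tits-form}, one may phrase the criterion as the absence of a critical convex subcategory, but no further work is needed here: the entire Euclidean case is an immediate corollary of the main theorem.

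For the Dynkin case the main theorem is not directly available, since iterated tilted algebras of type $\mathbb{A}_n$ may possess oriented cycles in their quiver, so they need not even be triangular, let alone simply connected. Instead I would argue that every iterated tilted algebra $A$ of Dynkin type is representation-finite. This is classical: $A$ is derived equivalent to the hereditary algebra $K\Delta$ with $\Delta$ Dynkin, and since $\mathsf{D^b(mod}\ K\Delta)$ has, up to shift, only finitely many isomorphism classes of indecomposable objects, any algebra in its derived equivalence class is representation-finite. As a representation-finite algebra is $\tau$-tilting finite, both conditions in the statement hold automatically, and the asserted equivalence is trivially true in this case.

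The main obstacle is conceptual rather than computational: Theorem \ref{result-strongly simply} is tailored to simply connected (in particular triangular) algebras, whereas iterated tiltedness is only a derived-equivalence condition, and simple connectedness is not a derived invariant. The delicate points are therefore (i) justifying that the Euclidean iterated tilted algebras really are simply connected for every admissible presentation arising from a sequence of tiltings and cotiltings, where the tree shape of $\widetilde{\mathbb{D}}_n, \widetilde{\mathbb{E}}_p$ is exactly what rules out a non-trivial fundamental group and thereby explains the exclusion of $\widetilde{\mathbb{A}}$; and (ii) ensuring that the Dynkin cases, which may slip outside the triangular world, are captured by the separate representation-finiteness argument rather than by the main theorem. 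Once these two points are settled the proposition follows with essentially no further effort.
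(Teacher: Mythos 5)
Your proposal is correct, but it takes a genuinely different route from the paper in the Dynkin case. The paper treats both cases uniformly: it cites \cite[Proposition 3.5]{A-iterated-tilted-B-D} for the fact that iterated tilted algebras of Dynkin type are simply connected, cites \cite[Corollary 1.4]{AS-some-class} for the fact that an iterated tilted algebra of Euclidean type is simply connected precisely when the type is $\widetilde{\mathbb{D}}_n$ or $\widetilde{\mathbb{E}}_p$, and then applies Theorem \ref{result-strongly simply} once to all cases. Your Euclidean argument is the same in substance. For the Dynkin case, however, you bypass simple connectedness entirely and argue that any algebra derived equivalent to $K\Delta$ with $\Delta$ Dynkin is representation-finite, so both sides of the equivalence hold vacuously. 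This is a correct and classical argument (it needs the small standard observation that, under the inverse equivalence, each indecomposable $A$-module is the image of some shift $M[i]$ of one of the finitely many indecomposable $K\Delta$-modules, and only boundedly many shifts can have homology concentrated in degree zero), and what it buys is independence from the simple connectedness of Dynkin-type iterated tilted algebras; the paper's route buys uniformity and a single appeal to the main theorem.

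One point must be flagged: your stated reason for splitting off the Dynkin case --- that iterated tilted algebras of type $\mathbb{A}_n$ ``may possess oriented cycles in their quiver, so they need not even be triangular, let alone simply connected'' --- is false. By the Assem--Happel characterization, iterated tilted algebras of type $\mathbb{A}_n$ are gentle algebras whose quiver is a tree; in particular they are triangular, and the reference the paper invokes shows that iterated tilted algebras of \emph{every} Dynkin type are simply connected. (Relatedly, type $\widetilde{\mathbb{A}}_n$ is excluded from the proposition not because its iterated tilted algebras fail to be triangular --- they are triangular --- but because their quivers contain an unoriented cycle, which yields a non-trivial fundamental group.) Since your Dynkin case is established by an independent argument that never uses this claim, the error is confined to the motivation and creates no gap in the proof; but as written it asserts a statement that directly contradicts the result the paper's proof rests on.
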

\begin{proof}
It is shown by \cite[Proposition 3.5]{A-iterated-tilted-B-D} that iterated tilted algebras of Dynkin type are simply connected, and by \cite[Corollary 1.4]{AS-some-class} that an iterated tilted algebra of Euclidean type is simply connected if and only if it is of types $\widetilde{\mathbb{D}}_n, \widetilde{\mathbb{E}}_p, (n\geqslant 4, p=6,7\ \text{or}\ 8)$. Then, the statement follows from Theorem \ref{result-strongly simply}.
\end{proof}

Lastly, we consider the algebras that have close connection with the representation type of strongly simply connected algebras. For the sake of simplicity, we have omitted their definitions and one can refer to the related articles for more details.
\begin{itemize}\setlength{\itemsep}{-3pt}
  \item Tubular algebras are introduced by Ringel in \cite[Chapter 5]{R-tubular}, which are simply connected (\cite[(1.4)]{AS-some-class}) and have only 6, 8, 9 or 10 simple modules. In particular, \cite[Proposition 2.4]{S-polynomial} implies that every tubular algebra is representation-infinite.
  \item The algebras which are derived equivalent to tubular algebras are also simply connected, see \cite{AS-some-class}. However, such an algebra could be representation-finite. One may refer to \cite[Theorem]{Ba-derived-tubular} for an explicit characterization of representation-finite algebras which are derived equivalent to tubular algebras.
  \item $pg$-critical algebras are introduced by N$\ddot{\text{o}}$renberg and Skowro$\acute{\text{n}}$ski in \cite{NS-pg-critical}, these algebras stand for the $p$olynomial $g$rowth critical algebras, that is, representation-infinite tame simply connected algebras which are not of polynomial growth, but every proper convex subcategory is.
  \item Hypercritical algebras are introduced by Unger \cite{U-hyper} (see also Lersch \cite{L-hyper} and Wittman \cite{W-hyper}), these algebras are preprojective tilts of minimal wild hereditary tree algebras of some types. One can understand hypercritical algebras by quivers and relations in \cite{U-hyper} and they are strongly simply connected. Actually, they are minimal wild strongly simply connected algebras.
\end{itemize}

We point out that tubular algebras and $pg$-critical algebras are not necessarily strongly simply connected while hypercritical algebras must be strongly simply connected. We are able to determine their $\tau$-tilting finiteness by Theorem \ref{result-strongly simply}.
\begin{proposition}
All tubular, $pg$-critical and hypercritical algebras are $\tau$-tilting infinite.
\end{proposition}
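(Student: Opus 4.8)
The plan is to reduce everything to Theorem~\ref{result-strongly simply}, which states that a simply connected algebra is $\tau$-tilting finite precisely when it is representation-finite. All three families in the statement have already been recorded above as consisting of simply connected algebras (indeed hypercritical algebras are strongly simply connected, and strong simple connectedness implies simple connectedness, since $A$ is a convex subcategory of itself). Therefore it suffices to check that every algebra in each family is representation-infinite; contraposition of Theorem~\ref{result-strongly simply} then immediately yields $\tau$-tilting infiniteness.

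I would dispatch the three cases separately. For $pg$-critical algebras there is nothing to prove beyond unwinding the definition recalled above: a $pg$-critical algebra is by definition a representation-infinite tame simply connected algebra, so Theorem~\ref{result-strongly simply} applies at once. For tubular algebras I would invoke Ringel's theory in \cite[Chapter 5]{R-tubular}: a tubular algebra is tame but representation-infinite, its Auslander--Reiten quiver containing infinitely many non-homogeneous tubes; combined with the simple connectedness noted above, Theorem~\ref{result-strongly simply} forces $\tau$-tilting infiniteness. For hypercritical algebras I would use the description in \cite{U-hyper} as preprojective tilts of minimal wild hereditary tree algebras: such algebras are of wild representation type and hence \emph{a fortiori} representation-infinite, so again the theorem gives the conclusion.

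Since the simple connectedness of all three families is already in hand and Theorem~\ref{result-strongly simply} does the heavy lifting, the statement is essentially a direct corollary. The only genuine content is recording representation-infiniteness in each case, and this is immediate: definitional for $pg$-critical, standard tame-infinite behaviour for tubular, and wildness for hypercritical. Consequently I do not anticipate a real obstacle; the one point requiring care is simply citing the correct classification (Ringel for tubular, Unger for hypercritical) to certify representation-infiniteness, after which the uniform argument applies to all three families at once.
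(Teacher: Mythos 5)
Your proposal is correct and matches the paper's own (implicit) argument exactly: the paper states the proposition immediately after recording that tubular algebras are simply connected, that $pg$-critical algebras are by definition representation-infinite tame simply connected, and that hypercritical algebras are strongly simply connected and wild, and then appeals to Theorem~\ref{result-strongly simply}. Your case-by-case verification of representation-infiniteness (Ringel for tubular, definitional for $pg$-critical, wildness for hypercritical) is precisely the content the paper leaves to the reader.
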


\begin{proposition}
Let $A$ be an algebra which is derived equivalent to a tubular algebra. Then, $A$ is $\tau$-tilting finite if and only if it is representation-finite.
\end{proposition}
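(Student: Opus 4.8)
The plan is to reduce the statement to Theorem~\ref{result-strongly simply}, which already settles the equivalence of $\tau$-tilting finiteness and representation-finiteness for the entire class of simply connected algebras. The direction asserting that a representation-finite $A$ is $\tau$-tilting finite holds for every finite-dimensional algebra with no hypothesis on $A$, so the whole content lies in the converse. For that converse, the single ingredient I need is that $A$ is itself simply connected.

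This ingredient is exactly the fact recorded in the bullet list preceding the proposition: by \cite{AS-some-class}, any algebra that is derived equivalent to a tubular algebra is simply connected. Granting it, the proof is immediate—Theorem~\ref{result-strongly simply} applies verbatim to $A$ and yields that $A$ is $\tau$-tilting finite if and only if it is representation-finite. So the entire argument is a one-line invocation of Theorem~\ref{result-strongly simply} once the simple connectedness of $A$ is cited.

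I would stress why the detour through simple connectedness is genuinely necessary rather than cosmetic. One cannot argue directly from the previous proposition, which says tubular algebras themselves are $\tau$-tilting infinite, because $\tau$-tilting finiteness is not a derived invariant; being derived equivalent to a $\tau$-tilting infinite algebra does not, on its own, force $A$ to be $\tau$-tilting infinite, and indeed representation type is not preserved under derived equivalence either. The property that \emph{is} stable under derived equivalence in this situation, and that feeds Theorem~\ref{result-strongly simply}, is precisely simple connectedness, which is why the statement of \cite{AS-some-class} is the crucial pivot.

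Accordingly, there is no real obstacle internal to this paper. The only nontrivial input, the simple connectedness of $A$, is external (supplied by \cite{AS-some-class}) and has already been quoted above; everything else is a formal application of the first main result.
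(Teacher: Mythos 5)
Your proposal is correct and is precisely the argument the paper intends: the bullet list preceding the proposition records (citing \cite{AS-some-class}) that any algebra derived equivalent to a tubular algebra is simply connected, and then Theorem \ref{result-strongly simply} is applied verbatim. Your additional remark that $\tau$-tilting finiteness is not a derived invariant is a sound observation (the paper itself illustrates this with the pair $D_2$, $E_2$), but it is supplementary rather than a deviation from the paper's route.
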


\section{Algebras with rectangle or triangle quiver}
In this section, we first recall the constructions of staircase algebras $\mathcal{A}(\lambda)$ introduced by Boos \cite{B-staircase}, which are parameterized by partitions $\lambda$. As we will show below, an algebra presented by a rectangle quiver with all possible commutativity relations is actually a staircase algebra $\mathcal{A}(\lambda)$ with $\lambda=(m^n)$. Similarly, we introduce the shifted-staircase algebra $\mathcal{A}^s(\lambda^s)$ as a generalization of algebras presented by triangle quivers.

\subsection{Staircase algebras}
We recall that a partition $\lambda=(\lambda_1\geqslant \lambda_2\geqslant \cdots\geqslant \lambda_\ell)$ of $n$ is a non-increasing sequence of positive integers such that $\sum_{i=1}^{\ell}\lambda_i=n$. Usually, we merge the same entries of $\lambda$ by potencies, for example, $(3,3,2,1,1)=(3^2,2,1^2)$. We denote by $Y(\lambda)$ the Young diagram of $\lambda$, that is, a box-diagram of which the $i$-th row contains $\lambda_i$ boxes. For example,
\begin{center}
$Y(3^2,2)=\yng(3,3,2)$.
\end{center}
Starting with $(1,1)$ in the top-left corner, we assign each of the boxes in $Y(\lambda)$ a coordinate $(i,j)$ by increasing $i$ from top to bottom and $j$ from left to right.

\begin{definition}(\cite[Definition 3.1]{B-staircase})
Let $\lambda$ be a partition. We define $Q_\lambda$ and $I_\lambda$ by
\begin{itemize}\setlength{\itemsep}{-3pt}
  \item the vertices of $Q_\lambda$ are given by the boxes appearing in $Y(\lambda)$.
  \item the arrows of $Q_\lambda$ are given by all $(i,j)\rightarrow (i,j+1)$ and $(i,j)\rightarrow (i+1,j)$, whenever all these vertices are defined.
  \item $I_\lambda$ is a two-sided ideal generated by all possible commutativity relations for all squares appearing in $Q_\lambda$.
\end{itemize}
Then, the bound quiver algebra $\mathcal{A}(\lambda):=KQ_\lambda/I_\lambda$ is called a staircase algebra.
\end{definition}

Following the above example, let $\lambda=(3^2,2)$, the associated quiver $Q_\lambda$ is given by
\begin{center}
$\vcenter{\xymatrix@C=0.7cm@R=0.6cm{(1,1)\ar[r]^{\alpha_{1,1}}\ar[d]_{\beta_{1,1}}& (1,2)\ar[d]_{\beta_{1,2}}\ar[r]^{\alpha_{1,2}}& (1,3)\ar[d]_{\beta_{1,3}}\\
(2,1)\ar[r]^{\alpha_{2,1}}\ar[d]_{\beta_{2,1}} & (2,2)\ar[d]_{\beta_{2,2}}\ar[r]^{\alpha_{2,2}}& (2,3)\\
(3,1)\ar[r]^{\alpha_{3,1}} & (3,2)&}}$ $\simeq$ $\vcenter{\xymatrix@C=0.7cm@R=0.6cm{\circ\ar[r]\ar[d]& \circ\ar[d]\ar[r]& \circ\ar[d]\\
\circ\ar[r]\ar[d]& \circ\ar[d]\ar[r]& \circ\\
\circ\ar[r] & \circ&}}$,
\end{center}
and the corresponding staircase algebra $\mathcal{A}(\lambda)$ is defined by
\begin{center}
$\mathcal{A}(\lambda):=KQ_\lambda/\langle\alpha_{1,1}\beta_{1,2}-\beta_{1,1}\alpha_{2,1},\alpha_{1,2}\beta_{1,3}-\beta_{1,2}\alpha_{2,2},\alpha_{2,1}\beta_{2,2}-\beta_{2,1}\alpha_{3,1}\rangle$.
\end{center}

We denote by $\lambda^T$ the transposed partition of a partition $\lambda$, which is given by the columns of the Young diagram $Y(\lambda)$ from left to right. Then, $\mathcal{A}(\lambda)$ is isomorphic to $\mathcal{A}(\lambda^T)$. Moreover, $\mathcal{A}(\lambda)$ is a basic, connected, triangular, finite-dimensional $K$-algebra.

\begin{proposition}{\rm{(\cite[Proposition 3.7]{B-staircase})}}
Let $\lambda$ be a partition. Then, the staircase algebra $\mathcal{A}(\lambda)$ is strongly simply connected.
\end{proposition}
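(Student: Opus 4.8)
The plan is to verify the separation property for $\mathcal{A}(\lambda)$ together with all of its convex subcategories, and then invoke Proposition \ref{separation}(2). The starting observation is that the commutativity relations force every two parallel paths in $Q_\lambda$ to be equal, so $\mathcal{A}(\lambda)$ is the incidence algebra of the poset $P_\lambda$ whose elements are the boxes of $Y(\lambda)$, ordered by $(i,j)\leqslant(i',j')$ if and only if $i\leqslant i'$ and $j\leqslant j'$. A convex subcategory of $\mathcal{A}(\lambda)$ is then the incidence algebra $KS$ of a convex subposet $S$ of $P_\lambda$; by convexity the interval between any two comparable boxes of $S$ is filled, so the Gabriel quiver of $KS$ again consists only of horizontal arrows $(i,j)\to(i,j+1)$ and vertical arrows $(i,j)\to(i+1,j)$ lying inside $S$, bound by the commutativity relations of the squares contained in $S$. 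Finally, since the opposite of a convex subposet of the grid is again, after reversing both coordinates, a convex subposet of the grid, every convex subcategory of $\mathcal{A}(\lambda)^{\mathrm{op}}$ has the same shape. Hence it suffices to prove the separation property for $KS$ for an arbitrary convex subposet $S$ of the grid.

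First I would describe the radical of an indecomposable projective. For a box $v=(i,j)\in S$, the module $\mathsf{rad}\ P_v$ is the thin representation supported on $\{w\in S: w>v\}$ with all structure maps equal to the identity, so its indecomposable summands correspond exactly to the connected components of this support. The covers of $v$ in $S$ are among its right neighbour $(i,j+1)$ and its down neighbour $(i+1,j)$, and the support is the union of the two principal filters they generate. If $(i+1,j+1)\in S$ these filters meet, the support is connected, and $\mathsf{rad}\ P_v$ is indecomposable, hence trivially separated. The interesting case is $(i+1,j+1)\notin S$: here convexity applied to $v$ and any hypothetical box dominating $(i+1,j+1)$ shows that no box of $S$ dominates $(i+1,j+1)$, so the support splits as a disjoint union of a horizontal arm $H=\{(i,c)\in S: c>j\}$ and a vertical arm $V=\{(a,j)\in S: a>i\}$, each a contiguous linear segment giving an indecomposable thin summand.

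It then remains to check that these two summands sit in different connected components of $Q(v)$. Since $\{w\leqslant v\}=\{(a,c)\in S: a\leqslant i,\ c\leqslant j\}$, the quiver $Q(v)$ is the full subquiver of $Q_S$ on $S\setminus\{w\leqslant v\}$. I would partition $S$ into $U=\{(a,c)\in S: c>j\}$, $L=\{(a,c)\in S: a>i\}$ and $D=\{w\leqslant v\}$; the hypothesis $(i+1,j+1)\notin S$ guarantees $U\cap L=\emptyset$, so this is a genuine partition with $H\subseteq U$ and $V\subseteq L$. Inspecting the two arrow types then shows that every arrow of $Q_S$ either stays inside one block or runs from $D$ into $U$ or from $D$ into $L$, but never directly between $U$ and $L$. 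Deleting $D$ to form $Q(v)$ therefore leaves $U$ and $L$ as unions of connected components, so $H$ and $V$, having distinct supports, lie in different components, and $\mathsf{rad}\ P_v$ is separated.

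The main obstacle is precisely this last connectivity bookkeeping: a priori the horizontal and vertical arms of a decomposed radical could be rejoined by a detour through boxes of $S$ incomparable to $v$, and one must rule this out uniformly for every convex subposet $S$ rather than merely for $Y(\lambda)$ itself. The $U/L/D$ partition, together with the facts that convexity fills in intervals and forbids diagonal cover arrows, is what keeps this step clean. Once it is in place, the separation property holds at every vertex of every convex subcategory of both $\mathcal{A}(\lambda)$ and $\mathcal{A}(\lambda)^{\mathrm{op}}$, and Proposition \ref{separation}(2) yields that $\mathcal{A}(\lambda)$ is strongly simply connected.
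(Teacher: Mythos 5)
Your proof is correct. Note that the paper gives no proof of this proposition at all --- it is quoted from \cite[Proposition 3.7]{B-staircase} --- but your argument is essentially the same separation-property strategy that the paper itself uses for the analogous proposition on shifted-staircase algebras $\mathcal{A}^s(\lambda^s)$: for every convex subcategory, show that the radical of each indecomposable projective is either indecomposable or splits into a horizontal and a vertical arm, check that in the latter case the two arms lie in different connected components of $Q(i,j)$, and conclude via Proposition \ref{separation}. Where the paper's version of this argument is terse (``one can check that $\mathsf{rad}\ P_{(i,j)}$ is either indecomposable or decomposed into exactly two indecomposable modules''), you make the two delicate points explicit and rigorous: the identification of convex subcategories with incidence algebras of convex subposets of the grid (which forbids diagonal cover arrows), and the $U/L/D$ partition showing that no arrow ever runs between the horizontal block and the vertical block, so the arms cannot be reconnected after deleting the predecessors of $(i,j)$. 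One minor economy you could take: Proposition \ref{separation}(2) requires the separation property only for the convex subcategories of $A$ \emph{or} of $A^{\mathrm{op}}$ (either side suffices), so your additional verification for $\mathcal{A}(\lambda)^{\mathrm{op}}$ --- while it does follow immediately from the coordinate-reversal symmetry you describe --- is not needed.
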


This implies that $\mathcal{A}(\lambda)$ is $\tau$-tilting finite if and only if it is representation-finite. Since the author of \cite{B-staircase} has given a complete classification of the representation type of $\mathcal{A}(\lambda)$, we can understand all $\tau$-tilting finite staircase algebras by quiver and relations.
\begin{theorem}{\rm{(Modified, \cite[Theorem 4.5]{B-staircase})}}\label{stair-classification}
A staircase algebra $\mathcal{A}(\lambda)$ with a partition $\lambda$ of $n$ is $\tau$-tilting finite if and only if one of the following holds:
  \begin{itemize}\setlength{\itemsep}{-3pt}
    \item $\lambda\in\{(n), (n-k, 1^k), (n-2, 2), (2^2,1^{n-4})\}$ for $k\leqslant n$.
    \item $n\leqslant 8$ and $\lambda\notin\{(4,3,1),(3^2,2),(3,2^2,1),(4,2,1^2)\}$.
  \end{itemize}
\end{theorem}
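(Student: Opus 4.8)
The plan is to sidestep any direct $\tau$-tilting computation and instead route everything through the representation type, exploiting that for this class of algebras $\tau$-tilting finiteness and representation-finiteness coincide. By \cite[Proposition 3.7]{B-staircase} every staircase algebra $\mathcal{A}(\lambda)$ is strongly simply connected, hence simply connected, so Theorem \ref{result-strongly simply} applies and gives that $\mathcal{A}(\lambda)$ is $\tau$-tilting finite if and only if it is representation-finite. Thus the content of the statement is nothing more than a translation, through this equivalence, of the known representation-finiteness criterion for $\mathcal{A}(\lambda)$ into the language of $\tau$-tilting finiteness; this is exactly what the word \emph{Modified} in the citation signals.

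Concretely, I would first invoke Boos's original classification \cite[Theorem 4.5]{B-staircase}, which determines by quiver and relations precisely those partitions $\lambda$ of $n$ for which $\mathcal{A}(\lambda)$ is representation-finite, namely the infinite family $\lambda\in\{(n),(n-k,1^k),(n-2,2),(2^2,1^{n-4})\}$ together with the cases $n\leqslant 8$ excluding the four partitions $(4,3,1)$, $(3^2,2)$, $(3,2^2,1)$ and $(4,2,1^2)$. Substituting ``representation-finite'' by ``$\tau$-tilting finite'' via the equivalence above then reproduces the two displayed conditions verbatim. It is worth emphasising that the intermediate tame and wild staircase algebras—the four exceptional partitions and everything with $n\geqslant 9$ lying outside the infinite family—are automatically $\tau$-tilting infinite here by Theorem \ref{result-strongly simply}, even though tame or wild algebras need not be $\tau$-tilting infinite in general; this is precisely where simple connectedness does the work.

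The only genuine care required is bookkeeping rather than new representation theory: one must verify that the list transcribed above matches Boos's representation-finite list and that it is compatible with the Morita equivalence $\mathcal{A}(\lambda)\simeq\mathcal{A}(\lambda^T)$, i.e.\ that a partition and its transpose are never separated by the criterion. This is straightforward to check, since the hook family is transpose-closed, $(n-2,2)$ and $(2^2,1^{n-4})$ are mutually transposed, and among the exceptions $(4,3,1)$ and $(3,2^2,1)$ are transposes while $(3^2,2)$ and $(4,2,1^2)$ are self-transposed. Consequently there is no substantive obstacle internal to this proof; the entire weight rests on the previously established equivalence of Theorem \ref{result-strongly simply} together with the cited classification of representation type.
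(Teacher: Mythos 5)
Your proposal is correct and is exactly the paper's argument: the paper likewise deduces the theorem by combining Boos's result that $\mathcal{A}(\lambda)$ is strongly simply connected with Theorem \ref{result-strongly simply}, and then transcribes the representation-finite cases from Boos's classification \cite[Theorem 4.5]{B-staircase}, which is precisely what the label ``Modified'' indicates. Your additional check that the list is closed under transposition is harmless bookkeeping not present in the paper, but nothing in the argument depends on it.
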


Let $\vec{A}_n$ be the path algebra of Dynkin type $\mathbb{A}_n$ associated with linear orientation. Then, we define
\begin{center}
$\mathcal{B}:=\left \{ B_{m,n}\mid B_{m,n}\ \text{is the tensor product}\ \vec{A}_m\otimes_K \vec{A}_{n} \right \}$.
\end{center}
Note that $B_{m,n}\simeq B_{n,m}$ is presented by a rectangle quiver with all possible commutativity relations and vice versa. We also note that $B_{m,n}$ can be regarded as a special staircase algebra. Hence, we can determine the $\tau$-tilting finiteness of $B_{m,n}$ as follows.
\begin{corollary}\label{result-rectangle}
Let $B_{m,n}\in \mathcal{B}$. Then, the algebra $B_{m,n}$ is $\tau$-tilting finite if and only if $(m,n)$ or $(n,m)\in\left \{ (1,k),(2,2),(2,3),(2,4)\mid k\in \mathbb{N} \right \}$.
\end{corollary}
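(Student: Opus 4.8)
The plan is to reduce everything to Theorem \ref{stair-classification} via the identification of $B_{m,n}$ with a rectangular staircase algebra. First I would observe that $B_{m,n}=\vec{A}_m\otimes_K\vec{A}_n$ is precisely the algebra presented by the rectangle quiver with all commutativity relations, so that $B_{m,n}\simeq \mathcal{A}(\lambda)$ for the rectangular partition $\lambda=(m^n)$ of $N:=mn$, whose Young diagram has $n$ rows each of length $m$. Since $B_{m,n}\simeq B_{n,m}$ and $\mathcal{A}(\lambda)$ is Morita equivalent to $\mathcal{A}(\lambda^T)$ with $(m^n)^T=(n^m)$, it suffices to decide when the rectangular shape $(m^n)$ satisfies one of the two conditions of Theorem \ref{stair-classification}; the resulting criterion will then be automatically symmetric in $m$ and $n$, matching the $(m,n)$-or-$(n,m)$ phrasing of the statement.

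The core of the argument is a short case analysis on the shape of $(m^n)$. When $n=1$ we get $\lambda=(m)=(N)$, and when $m=1$ we get $\lambda=(1^n)=(N-k,1^k)$ with $k=n-1$; both lie in the first family of Theorem \ref{stair-classification}, so these cases are always $\tau$-tilting finite, accounting for the pairs $(k,1)$ and $(1,k)$. I would then assume $m,n\geq 2$. A genuinely rectangular partition with all parts equal to $m\geq 2$ and with $n\geq 2$ parts can coincide with a member of $\{(N),(N-k,1^k),(N-2,2),(2^2,1^{N-4})\}$ only in the degenerate case $m=n=2$: the shapes $(N)$ and $(N-k,1^k)$ force either a single part or parts of size $1$, while $(N-2,2)$ and $(2^2,1^{N-4})$ both collapse to $(2,2)$ exactly when $N=4$ and otherwise display two distinct part sizes. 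Hence among $m,n\geq 2$ the first bullet contributes only $(2,2)$.

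It remains to run through the second bullet, which requires $N=mn\leq 8$. For $m,n\geq 2$ the rectangles with $mn\leq 8$ are the $2\times 2$, $2\times 3$, $2\times 4$ boxes and their transposes, giving partitions $(2^2)$, $(2^3)$, $(3^2)$, $(2^4)$, $(4^2)$, none of which lies in the excluded set $\{(4,3,1),(3^2,2),(3,2^2,1),(4,2,1^2)\}$; thus all of them are $\tau$-tilting finite, yielding the pairs $(2,2)$, $(2,3)$, $(2,4)$ up to order. Conversely, any rectangle with $m,n\geq 2$ and $mn\geq 9$ has $N\geq 9>8$ and, by the previous paragraph, is not of the first-family shape, so it fails both conditions and is $\tau$-tilting infinite. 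Collecting the cases gives exactly the asserted list.

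The step I expect to be the main obstacle — indeed essentially the only non-mechanical point — is confirming that no large rectangle sneaks into the first family of Theorem \ref{stair-classification}: one must verify that a rectangular shape with parts $\geq 2$ and at least two rows can never realise the hook-type profiles $(N)$, $(N-k,1^k)$, $(N-2,2)$ or $(2^2,1^{N-4})$ apart from the degenerate $(2,2)$. This is what guarantees that the bound $N\leq 8$ in the second bullet is the genuine cutoff and, in particular, that the $3\times 3$ square $(3^3)$ is already $\tau$-tilting infinite even though its underlying partition is small.
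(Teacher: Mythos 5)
Your proposal is correct and follows essentially the same route as the paper: identify $B_{m,n}$ with the staircase algebra $\mathcal{A}(\lambda)$ for $\lambda=(m^n)$ (or $(n^m)$) and invoke Theorem \ref{stair-classification} (together with Theorem \ref{result-strongly simply}, which is already built into its ``modified'' statement). The only difference is that the paper declares the conclusion to follow immediately, whereas you spell out the routine case analysis showing that among rectangles with $m,n\geqslant 2$ only $(2^2)$ lies in the first family and only $N=mn\leqslant 8$ survives the second; this is exactly the verification the paper leaves implicit.
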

\begin{proof}
It is obvious that $B_{m,n}$ is a staircase algebra $\mathcal{A}(\lambda)$ with $\lambda=(m^n)$ or $(n^m)$. Then, the statement follows from Theorem \ref{stair-classification}.
\end{proof}

\subsection{Shifted-staircase algebras}
Now, we consider the triangle quivers. We point out that the quiver of staircase algebras cannot be a triangle quiver because of the different orientations. One may look at the following case as an example.
\begin{center}
\scalebox{0.8}{$\xymatrix@C=0.6cm@R=0.6cm{&&&\circ\ar[dr]&&&\\
&&\circ\ar[ur]\ar[dr]\ar@{.}[rr]&&\circ\ar[dr]&&\\
&\circ\ar[ur]\ar[dr]\ar@{.}[rr]&&\circ\ar[ur]\ar[dr]\ar@{.}[rr]&&\circ\ar[dr]&\\
\circ\ar[ur]&&\circ\ar[ur]&&\circ\ar[ur]&&\circ}$\ \ \ \ \ \ \ \ \
$\xymatrix@C=0.6cm@R=0.6cm{&&&\circ\ar[dr]\ar[dl]\ar@{.}[dd]&&&\\
&&\circ\ar[dl]\ar[dr]\ar@{.}[dd]&&\circ\ar[dr]\ar[dl]\ar@{.}[dd]&&\\
&\circ\ar[dl]\ar[dr] &&\circ\ar[dl]\ar[dr] &&\circ\ar[dr]\ar[dl]&\\
\circ &&\circ &&\circ &&\circ}$}
\end{center}
\begin{center}
\scalebox{0.8}{A triangle quiver $Q_{tri}$ \ \ \ \ \ \ \ \ \ \ \ \ \ \ \ \ \ \ \ \ \ \ \ \ \ \   $\mathcal{A}(\lambda)$ with $\lambda=(4,3,2,1)$}
\end{center}
This motivates us to introduce the shifted-staircase algebras.

We recall that a shifted partition $\lambda^s=(\lambda_1^s>\lambda_2^s>\cdots>\lambda_\ell^s)$ is a strictly decreasing sequence of positive integers. Similarly, we can visualize $\lambda^s$ by the shifted Young diagram $Y(\lambda^s)$, that is, a box-diagram of which the $i$-th row contains $\lambda_i$ boxes and is shifted to the right $i-1$ steps. For example, let $\lambda^s=(4,3,2,1)$, then
\begin{center}
$Y(\lambda^s)=$ \scalebox{0.6}{$\ydiagram{4,1+3,2+2,3+1}$}.
\end{center}
Starting with $(1,1)$ in the top-left corner, we assign each of the boxes in $Y(\lambda^s)$ a coordinate $(i,j)$ with $j\geqslant i$ by increasing $i$ from top to bottom and $j$ from left to right.

\begin{definition}\label{def-shifted-staircase}
Let $\lambda^s$ be a shifted partition and $\mathcal{A}^s(\lambda^s):=KQ_{\lambda^s}/I_{\lambda^s}$ such that
\begin{itemize}\setlength{\itemsep}{-3pt}
  \item the vertices of $Q_{\lambda^s}$ are given by the boxes appearing in $Y(\lambda^s)$.
  \item the arrows of $Q_{\lambda^s}$ are given by all $(i,j)\rightarrow (i,j+1)$ and $(i,j)\rightarrow (i+1,j)$, whenever all these vertices are defined.
  \item $I_{\lambda^s}$ is a two-sided ideal generated by all possible commutativity relations for all squares appearing in $Q_{\lambda^s}$.
\end{itemize}
Then, the bound quiver algebra $\mathcal{A}^s(\lambda^s)$ is called a shifted-staircase algebra.
\end{definition}

For example, the quiver $Q_{\lambda^s}$ with $\lambda^s=(4,3,2,1)$ is exactly the triangle quiver $Q_{tri}$ displayed above. In fact, the algebra presented by a triangle quiver with $\frac{n(n+1)}{2}$ vertices (and all possible commutativity relations), is exactly the shifted-staircase algebra $\mathcal{A}^s(\lambda^s)$ with $\lambda^s=(n, n-1, \ldots,2,1)$.

It is obvious that $\mathcal{A}^s(\lambda^s)$ is also a basic, connected, triangular, finite-dimensional $K$-algebra. Next, we show that $\mathcal{A}^s(\lambda^s)$ is strongly simply connected.
\begin{proposition}
For a shifted partition $\lambda^s$, $\mathcal{A}^s(\lambda^s)$ is strongly simply connected.
\end{proposition}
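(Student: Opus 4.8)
The plan is to apply the characterization in Proposition \ref{separation}(2): since an algebra is strongly simply connected as soon as every convex subcategory of it satisfies the separation property, it suffices to verify that property for every convex subcategory $B$ of $\mathcal{A}^s(\lambda^s)$. First I would record the structural description of the algebra. The arrows $(i,j)\rightarrow(i,j+1)$ and $(i,j)\rightarrow(i+1,j)$ are precisely the covering relations of the poset $\mathcal{P}_{\lambda^s}$ of boxes of $Y(\lambda^s)$ ordered by $(i,j)\leqslant(i',j')$ iff $i\leqslant i'$ and $j\leqslant j'$, and the elementary square commutativities generating $I_{\lambda^s}$ identify all paths between two comparable boxes; hence $\mathcal{A}^s(\lambda^s)$ is the incidence algebra $K\mathcal{P}_{\lambda^s}$. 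In particular each indecomposable projective $P_{(a,b)}$ is the thin interval module supported on the boxes $(i,j)$ of $Y(\lambda^s)$ with $i\geqslant a$ and $j\geqslant b$, and $\mathsf{rad}\ P_{(a,b)}$ is generated by the images of the right arrow to $(a,b+1)$ and the down arrow to $(a+1,b)$, whenever these exist. Every convex subcategory $B$ is the incidence algebra of a convex subposet $C\subseteq\mathcal{P}_{\lambda^s}$, so the same description applies verbatim inside $C$.

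Next I would fix a vertex $x=(a,b)$ of such a $C$ and analyse $\mathsf{rad}\ P_x^B$. If $x$ has at most one arrow leaving it in $C$, then $\mathsf{rad}\ P_x^B$ is zero or a cyclic, hence indecomposable, interval module, and is therefore trivially separated. The only genuine case is when both the right arrow $\alpha$ and the down arrow $\beta$ lie in $C$. Here the decisive point is whether the box $(a+1,b+1)$ lies in $C$. I would show that the supports of $\alpha B$ and $\beta B$ meet if and only if $(a+1,b+1)\in C$: because arrows only increase both coordinates, a common box $(i,j)$ with $i\geqslant a+1$ and $j\geqslant b+1$ can be reached from $(a,b+1)$ only by a path through $(a+1,b+1)$, and the convexity of $C$ then forces $(a+1,b+1)\in C$ as soon as a common box exists. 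When $(a+1,b+1)\in C$, the commutativity relation $\alpha\cdot(\text{down})=\beta\cdot(\text{right})$ glues the two branches, so $\mathsf{rad}\ P_x^B$ is indecomposable and again trivially separated.

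In the remaining case $(a+1,b+1)\notin C$ the radical splits as $\mathsf{rad}\ P_x^B=\alpha B\oplus\beta B$, a direct sum of two indecomposables supported on the down-right closures of $(a,b+1)$ and of $(a+1,b)$ in $C$. It then remains to check that these two supports lie in different connected components of the subquiver $Q_C(x)$. Since $Q_C(x)$ deletes $x$ itself together with every box up-left of $x$, any walk in $Q_C(x)$ joining the two supports would have to pass through the unique corner vertex $(a+1,b+1)$, which is absent by assumption; hence the two summands sit in distinct components and $\mathsf{rad}\ P_x^B$ is separated. As $x$ and $B$ were arbitrary, every convex subcategory of $\mathcal{A}^s(\lambda^s)$ satisfies the separation property, and Proposition \ref{separation}(2) yields the claim.

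I expect the main obstacle to be the bookkeeping in the two-arrow case, in particular giving a clean combinatorial proof that the supports meet if and only if $(a+1,b+1)\in C$ and that a connecting walk in $Q_C(x)$ must route through $(a+1,b+1)$. Both facts rest on the observation that all arrows strictly increase the coordinates, combined with the convexity of $C$; making this airtight near the diagonal of the shifted diagram, where the down-right region of a box is triangular rather than rectangular, is the one place where the shifted case genuinely differs from the ordinary staircase case treated by Boos.
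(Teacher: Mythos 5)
Your proposal is correct and follows essentially the same route as the paper: both verify the separation property for every convex subcategory $B$ of $\mathcal{A}^s(\lambda^s)$ by showing that $\mathsf{rad}\ P_{(a,b)}$ decomposes into two indecomposables precisely when $(a,b+1),(a+1,b)\in Q_B$ but $(a+1,b+1)\notin Q_B$, and that the deleted subquiver $Q_B(a,b)$ then splits the two supports into different components, concluding via Proposition \ref{separation}. Your write-up is just a more detailed version (incidence-algebra description, explicit convexity argument for the corner box) of the paper's terse ``one can check'' proof; the only phrasing to tighten is that a connecting walk need not literally pass through $(a+1,b+1)$ but must pass through some vertex $\geqslant(a+1,b+1)$, whose membership in $B$ forces $(a+1,b+1)\in B$ by convexity.
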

\begin{proof}
Let $B$ be a convex subcategory of $\mathcal{A}^s(\lambda^s)$ and $P_{(i,j)}$ the indecomposable projective $B$-module at vertex $(i,j)$. Then, one can check that $\mathsf{rad}\ P_{(i,j)}$ is either indecomposable or decomposed into exactly two indecomposable $B$-modules. The latter case appears if and only if $(i,j+1)$ and $(i+1,j)$ are vertices of the quiver $Q_B$ of $B$, but $(i+1,j+1)$ is not.

For the former case, there is nothing to prove. For the latter case, let $Q(i,j)$ be the subquiver of $Q_B$ obtained by deleting all vertices of $Q_B$ being a source of a path in $Q_B$ with target $(i,j)$. Then, $Q(i,j)$ is decomposed into two disjoint subquivers such that $\mathsf{rad}\ P_{(i,j)}$ is separated. Hence, $B$ satisfies the separation condition. Then, the statement follows from Proposition \ref{separation} (2).
\end{proof}

Now, we have understood that $\mathcal{A}^s(\lambda^s)$ is $\tau$-tilting finite if and only if it is representation-finite. Then, it is reasonable to give a complete classification for the representation type of shifted-staircase algebras. Before to do this, we need the following observation.

Let $\lambda^s=(\lambda_1^s>\lambda_2^s>\cdots>\lambda_\ell^s)$ and $\mu^s=(\mu_1^s>\mu_2^s>\cdots>\mu_k^s)$ be two shifted partitions. We say that $\lambda^s\leq \mu^s$ if $\ell \leqslant k$ and $\lambda_i^s\leqslant \mu_i^s$ for all $1\leqslant i\leqslant \ell$.
\begin{proposition}
Suppose that $\lambda^s\leq \mu^s$. Then, $\mathcal{A}^s(\lambda^s)$ is a convex subcategory of $\mathcal{A}^s(\mu^s)$.
\end{proposition}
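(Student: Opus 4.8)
The plan is to realize $\mathcal{A}^s(\lambda^s)$ explicitly as the full subcategory of $\mathcal{A}^s(\mu^s)$ supported on the boxes of $Y(\lambda^s)$, and then to check the combinatorial convexity condition from Section 2 directly on the quivers. First I would fix the vertex identification. Writing
$V(\lambda^s)=\{(i,j): 1\le i\le \ell,\ i\le j\le i+\lambda_i^s-1\}$ and $V(\mu^s)=\{(i,j): 1\le i\le k,\ i\le j\le i+\mu_i^s-1\}$ for the box sets of $Y(\lambda^s)$ and $Y(\mu^s)$, the hypothesis $\lambda^s\le\mu^s$ (that is, $\ell\le k$ and $\lambda_i^s\le\mu_i^s$ for $1\le i\le\ell$) gives row by row the inclusion of column ranges $[i,\,i+\lambda_i^s-1]\subseteq[i,\,i+\mu_i^s-1]$, hence $V(\lambda^s)\subseteq V(\mu^s)$. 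Because the arrows of both quivers are determined box-locally — a horizontal arrow $(i,j)\to(i,j+1)$ or a vertical arrow $(i,j)\to(i+1,j)$ exists exactly when both its endpoints are boxes — the full subquiver of $Q_{\mu^s}$ on $V(\lambda^s)$ coincides with $Q_{\lambda^s}$; likewise a commutative square lies in $V(\lambda^s)$ iff all four of its corners are boxes of $Y(\lambda^s)$, so the generators of $I_{\mu^s}$ that survive on $V(\lambda^s)$ are precisely the generators of $I_{\lambda^s}$. This identifies the full subcategory on $V(\lambda^s)$ with $\mathcal{A}^s(\lambda^s)$, provided the path condition below guarantees that no morphism between two boxes of $Y(\lambda^s)$ factors through a box outside $Y(\lambda^s)$.

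The core step is the convexity condition: every path of $Q_{\mu^s}$ whose source and target lie in $V(\lambda^s)$ stays inside $V(\lambda^s)$. I would use two observations. Since each arrow raises exactly one coordinate by $1$ and fixes the other, along any path both coordinates are weakly non-decreasing; thus an intermediate vertex $(i',j')$ of a path from $(i_0,j_0)$ to $(i_1,j_1)$ satisfies $i'\le i_1$ and $j'\le j_1$. Second, I would record that the right boundary $R_\lambda(i):=i+\lambda_i^s-1$ of $Y(\lambda^s)$ is non-increasing in $i$: as $\lambda^s$ is strictly decreasing, $\lambda_{i+1}^s\le\lambda_i^s-1$, whence $R_\lambda(i+1)=i+\lambda_{i+1}^s\le i+\lambda_i^s-1=R_\lambda(i)$. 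Combining these, for an intermediate $(i',j')$ one gets $1\le i_0\le i'\le i_1\le\ell$, the automatic inequality $i'\le j'$ (it is a box of $Y(\mu^s)$), and $j'\le j_1\le R_\lambda(i_1)\le R_\lambda(i')$, so $(i',j')\in V(\lambda^s)$. That is the entire path condition.

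The main obstacle is conceptual rather than computational: one must ensure that passing to the full subcategory introduces no extra identifications of morphisms, so that $\mathcal{A}^s(\lambda^s)$ is genuinely the full subcategory and not merely a quotient of it. This is exactly what convexity buys — by the path condition every path between two boxes of $Y(\lambda^s)$ already lies in $Q_{\lambda^s}$, and every commutativity relation applicable to such paths is a square relation inside $Y(\lambda^s)$ — so the $\mathsf{Hom}$-spaces computed in $\mathcal{A}^s(\mu^s)$ and in $\mathcal{A}^s(\lambda^s)$ coincide. The single place where the shifted (as opposed to ordinary) nature of $\lambda^s$ is essential is the monotonicity of $R_\lambda$: it is precisely the strict decrease $\lambda_1^s>\dots>\lambda_\ell^s$ that forces the right boundary to step weakly inward as $i$ grows, which is what prevents a path from leaving and re-entering $Y(\lambda^s)$.
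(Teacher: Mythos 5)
Your proof is correct. The paper states this proposition without any proof (it is offered as a bare observation just before the classification theorem for shifted-staircase algebras), so there is no argument to compare against; your verification supplies exactly the details left implicit: the row-by-row inclusion of vertex sets, the identification of the full subquiver and of the relation spaces (where convexity is what forces every square relation and every factoring path between boxes of $Y(\lambda^s)$ to stay inside $Y(\lambda^s)$), and the path condition via weak monotonicity of both coordinates along any path combined with the non-increasing right boundary $R_\lambda(i)=i+\lambda_i^s-1$ --- the one point where strictness of the shifted partition is genuinely used.
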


We may use an example to understand the above proposition. Let $\lambda^s=(4,3,1)$ and $\mu^s=(4,3,2,1)$. Then, $\mathcal{A}^s(\lambda^s)$ is presented by
\begin{center}
$\vcenter{\xymatrix@C=0.7cm@R=0.6cm{\circ\ar[r]& \circ\ar[d]\ar[r]\ar@{.}[dr]& \circ\ar[d]\ar[r]\ar@{.}[dr]&\circ\ar[d]\\
& \circ\ar[r]& \circ\ar[d]\ar[r]&\circ\\
 & &\circ&}}$
\end{center}
with all possible commutativity relations, while $\mathcal{A}^s(\mu^s)$ is presented by the triangle quiver $Q_{tri}$ with all possible commutativity relations. One can easily find that  $\mathcal{A}^s(\lambda^s)$ is a proper convex subcategory of $\mathcal{A}^s(\mu^s)$.

\begin{theorem}\label{rep-type-shifted}
Let $\lambda^s$ be a shifted partition. Then, $\mathcal{A}^s(\lambda^s)$ is representation-finite if and only if $\lambda^s$ is one of $(n)$, $(m-1,1)$ with $m\geqslant 3$, $(3,2)$, $(4,2)$, $(5,2)$, $(6,2)$, $(4,3)$, $(5,3)$, $(5,4)$, $(3,2,1)$ and $(4,2,1)$; tame concealed if and only if $\lambda^s$ is one of $(6,3)$, $(7,2)$ and $(5,2,1)$; tame non-concealed if and only if $\lambda^s$ is one of $(6,4)$, $(6,5)$, $(4,3,1)$, $(4,3,2)$ and $(4,3,2,1)$. Otherwise, $\mathcal{A}^s(\lambda^s)$ is wild.
\end{theorem}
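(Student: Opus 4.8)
The plan is to turn every clause of the statement into a question about the Tits form $q_{\mathcal{A}^s(\lambda^s)}$. Since $\mathcal{A}^s(\lambda^s)$ is strongly simply connected (as shown just above), Proposition \ref{tits-form} applies directly: $\mathcal{A}^s(\lambda^s)$ is representation-finite exactly when $q_{\mathcal{A}^s(\lambda^s)}$ is weakly positive, it is tame exactly when $q_{\mathcal{A}^s(\lambda^s)}$ is weakly non-negative but not weakly positive, and otherwise it is wild, namely when $q_{\mathcal{A}^s(\lambda^s)}$ takes a negative value on $\mathbf{N}^{Q_0}$. For $\mathcal{A}^s(\lambda^s)$ this form is completely explicit from Definition \ref{def-shifted-staircase}: each box $(i,j)$ of the shifted diagram contributes $v_{(i,j)}^2$, each arrow contributes $-v_{(i,j)}v_{(i',j')}$, and each commutativity relation on a unit square contributes $+v_{(i,j)}v_{(i+1,j+1)}$.

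First I would record the monotonicity coming from the convexity proposition just proved: if $\lambda^s\leq\mu^s$, then $\mathcal{A}^s(\lambda^s)$ is a convex subcategory of $\mathcal{A}^s(\mu^s)$, and a vector supported on the smaller shifted diagram is evaluated identically by the two Tits forms. Hence weak positivity descends along $\leq$, while the failure of weak positivity (respectively of weak non-negativity) ascends along $\leq$. This collapses the theorem to a finite amount of boundary data: organising the shifted partitions by length $\ell$ and, for fixed $\ell$, by their entries, one only needs, along each chain, the first partition whose form stops being weakly positive and the first whose form stops being weakly non-negative.

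Next I would carry out these boundary computations. For the representation-finite list it suffices to verify weak positivity at the maximal members $(6,2)$, $(5,4)$ and $(4,2,1)$, together with the two infinite families $(n)$ and $(m-1,1)$, the latter being Dynkin tree algebras of types $\mathbb{A}$ and $\mathbb{D}$ and hence weakly positive; monotonicity then yields weak positivity for every smaller partition. For wildness I would exhibit, at each minimal wild partition $(8,2)$, $(7,3)$, $(7,4)$, $(6,5)$, $(6,2,1)$, $(5,3,1)$, $(5,3,2)$ and $(5,3,2,1)$, a single vector in $\mathbf{N}^{Q_0}$ on which $q$ is negative; monotonicity then propagates wildness to all larger partitions, and since every partition of length $\ell\geq 5$ dominates $(5,3,2,1)$, this disposes of all longer shifted partitions at once. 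The seven tame partitions are treated by checking that their forms are positive semidefinite with a nonzero non-negative radical vector, which gives weak non-negativity together with the failure of weak positivity, so that Proposition \ref{tits-form}(2) yields tameness.

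The subtle point is the concealed versus non-concealed dichotomy among the seven tame algebras. Here I would use the characterisation recalled in Subsection \ref{section-critical}: a tame concealed algebra is precisely a minimal representation-infinite algebra admitting a preprojective component. Since a representation-infinite convex subcategory $B\subsetneq A$ on a vertex set $Q_B$ survives inside $A/Ae_jA$ for any vertex $j\notin Q_B$, an algebra possessing a proper representation-infinite convex subcategory is never minimal infinite, hence never tame concealed. For the four non-concealed partitions I would therefore exhibit such a subcategory: for $(6,4)$, $(4,3,2)$ and $(4,3,2,1)$ it is read off from the order $\leq$, as each dominates the strictly smaller \emph{infinite} shifted-staircase partition $(6,3)$, $(4,3,1)$ and $(4,3,2)$ respectively, while for $(4,3,1)$ I would delete the source $(1,1)$ and check that the resulting strongly simply connected convex subcategory already has a non-negative radical vector in its Tits form, so that it is representation-infinite by Proposition \ref{tits-form}. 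For the three concealed partitions $(6,3)$, $(7,2)$ and $(5,2,1)$ I would instead verify minimality, that every proper quotient $A/Ae_jA$ is representation-finite, and pin down the Euclidean type by matching the bound quiver against the complete classification of tame concealed algebras in \cite{HV-tame-concealed} and \cite{B-critical}. I expect this verification of minimality for the three concealed cases to be the main obstacle: unlike everything before it, it cannot be settled by the single-subcategory monotonicity argument and instead forces one to control all proper convex subcategories simultaneously, which is where the explicit tame-concealed classification becomes indispensable.
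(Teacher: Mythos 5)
Your proposal is correct, and it reaches the classification by a genuinely different verification strategy from the paper. The skeleton is shared: strong simple connectedness, Proposition \ref{tits-form}, and the monotonicity supplied by the convexity proposition reduce the theorem to finitely many extremal partitions. But the paper settles almost all of those extremal cases by recognizing them in published classification lists rather than by computing quadratic forms: $(7,2)$, $(6,3)$ and $(5,2,1)$ are the critical algebras numbered $18$, $93$ and $14$ in Bongartz's list, which proves the entire tame concealed clause at once and, since critical algebras are minimal representation-infinite, yields representation-finiteness of $(6,2)$, $(5,2)$, $(4,2)$, $(3,2)$, $(5,3)$, $(4,3)$, $(4,2,1)$, $(3,2,1)$ with no computation; $(8,2)$, $(7,3)$ and $(6,2,1)$ are hypercritical algebras in Unger's list, giving their wildness for free; and $(5,4)$ is located in the Rogat--Tesche list of representation-finite sincere simply connected algebras. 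The paper uses Tits forms only for the wildness of $(6,5)$ and $(5,3,1)$ (explicit negative vectors) and for the tameness of $(6,4)$ and $(4,3,2,1)$ (weak non-negativity checked by computer). Your route inverts this balance: everything becomes a quadratic-form computation, and the lists are consulted only to identify the three concealed algebras. This is legitimate: weak positivity at the maximal members $(6,2)$, $(5,4)$, $(4,2,1)$ and negativity at your wild generators are finitely checkable; your covering set does dominate every unlisted partition (though $(7,4)$, $(5,3,2)$ and $(5,3,2,1)$ are redundant, since they dominate $(7,3)$ and $(5,3,1)$ --- the paper's five generators suffice); and your argument that a proper representation-infinite convex subcategory survives in some quotient $A/Ae_jA$ and therefore precludes minimality, hence concealedness, is exactly the paper's. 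In short, the paper's route is nearly computation-free but leans on external tables, while yours is self-contained but must carry out several quadratic-form verifications on forms with up to ten variables.

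One caveat deserves emphasis. For the tame cases you propose to check positive semidefiniteness, which is strictly stronger than the weak non-negativity that Proposition \ref{tits-form}(2) requires. It does hold here: the forms of $(6,4)$, $(4,3,1)$, $(4,3,2)$ and $(4,3,2,1)$ are indeed positive semidefinite (with the extended null root of the embedded critical subcategory as a non-negative radical vector), and the three concealed cases have Tits forms equal to Euler forms of Euclidean type, hence positive semidefinite of corank one. But this is a fortunate feature of these particular algebras rather than a general property of tame strongly simply connected algebras, whose Tits forms are only guaranteed to be weakly non-negative; positive semidefiniteness on all of $\mathbb{Z}^{Q_0}$ reflects polynomial growth. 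Had one of the seven been tame of non-polynomial growth, your test would have failed even though the algebra is tame, and you would have had to fall back on a direct weak non-negativity check, which is exactly what the paper does by computer for $(6,4)$ and $(4,3,2,1)$.
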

\begin{proof}
We first observe that $\mathcal{A}^s(n)$ and $\mathcal{A}^s(m-1,1)$ with $m\geqslant 3$ are path algebras of Dynkin types $\mathbb{A}_n$ and $\mathbb{D}_m$, respectively. Thus, both of them are representation-finite.

Next, we directly construct the Tits form of $\mathcal{A}^s(\lambda^s)$ for some small cases. Then, we can use Proposition \ref{tits-form} to check their representation type since $\mathcal{A}^s(\lambda^s)$ is strongly simply connected. In particular, we use the software GAP\footnote{One may refer to the commands $\mathsf{IsWeaklyPositiveUnitForm(-)}$ and $\mathsf{IsWeaklyNonnegativeUnitForm(-)}$.} to check whether a Tits form is weakly non-negative (resp., weakly positive) or not, where the method in GAP is introduced by \cite{DP-tits-form-weaklynonnag} (resp., \cite{H-edge-reduction}).

(1) Assume that $Q_{(8,2)}$ is labeled as
\begin{center}
$\xymatrix@C=0.7cm@R=0.6cm{1\ar[r]& 2\ar[r]\ar[d]\ar@{.}[dr]& 3\ar[r]\ar[d]&4\ar[r]&5\ar[r]&6\ar[r]&7\ar[r]&8\\
& 9\ar[r]&10&&&}$.
\end{center}
By the definition of Tits form, we have
\begin{center}
$\begin{aligned}
q_{\mathcal{A}^s(8,2)}(v)= &\ v_1^2-v_1v_2+v_2^2-v_2v_3-v_2v_9+v_2v_{10}+v_3^2\\
&-v_3v_4-v_3v_{10}+v_4^2-v_4v_5+v_5^2-v_5v_6+v_6^2\\
&-v_6v_7+v_7^2-v_7v_8+v_8^2+v_9^2-v_9v_{10}+v_{10}^2.
\end{aligned}$
\end{center}
Then, one can check that
\begin{center}
$q_{\mathcal{A}^s(8,2)}\begin{pmatrix}
\begin{matrix}
10 &20& 25& 20& 16&12&5&1\\
 & 15& 10&  &  &
\end{matrix}\end{pmatrix}=-1$.
\end{center}
We deduce that $\mathcal{A}^s(8,2)$ is wild following Proposition \ref{tits-form}. Then, we observe that $q_{\mathcal{A}^s(7,2)}(v)$ is weakly non-negative, but it is not weakly positive due to
\begin{center}
$q_{\mathcal{A}^s(7,2)}\begin{pmatrix}
\begin{matrix}
2 &4& 5& 4& 3&2&1\\
 & 3& 2&  &  &&
\end{matrix}\end{pmatrix}=0$,
\end{center}
and hence, $\mathcal{A}^s(7,2)$ is tame. Also, one may use GAP to check that $q_{\mathcal{A}^s(6,2)}(v)$ is weakly positive, which implies that $\mathcal{A}^s(6,2)$, $\mathcal{A}^s(5,2)$, $\mathcal{A}^s(4,2)$ and $\mathcal{A}^s(3,2)$ are representation-finite. Similarly, one can see that $\mathcal{A}^s(7,3)$ is wild by
\begin{center}
$q_{\mathcal{A}^s(7,3)}\begin{pmatrix}
\begin{matrix}
2 &4& 6& 6& 4&2&1\\
 & 4& 4& 2&  &&
\end{matrix}\end{pmatrix}=-1$,
\end{center}
and $\mathcal{A}^s(6,3)$ is tame by $q_{\mathcal{A}^s(6,3)}(v)$ being weakly non-negative and
\begin{center}
$q_{\mathcal{A}^s(6,3)}\begin{pmatrix}
\begin{matrix}
1 &2& 3 & 3&2&1\\
 & 2& 2& 1&  &
\end{matrix}\end{pmatrix}=0$.
\end{center}
Also, one may find that $\mathcal{A}^s(5,3)$ and $\mathcal{A}^s(4,3)$ are representation-finite since $q_{\mathcal{A}^s(5,3)}(v)$ is weakly positive.

(2) Assume that $Q_{(6,5)}$ is labeled as
\begin{center}
$\xymatrix@C=0.7cm@R=0.6cm{1\ar[r]& 2\ar[r]\ar[d]\ar@{.}[dr]& 3\ar[r]\ar[d]\ar@{.}[dr]&4\ar[r]\ar[d]\ar@{.}[dr]&5\ar[r]\ar[d]\ar@{.}[dr]&6\ar[d]\\
&7\ar[r]& 8\ar[r]& 9\ar[r]&10\ar[r]&11}$.
\end{center}
Then, the Tits form $q_{\mathcal{A}^s(6,5)}(v)=vXv^{T}$ is given by
\begin{center}
$X=\dfrac{1}{2}\cdot$ \scalebox{0.75}{$ \begin{pmatrix}
2&-1&0&0&0&0&0&0&0&0&0\\
-1&2&-1&0&0&0&-1&1&0&0&0\\
0&-1&2&-1&0&0&0&-1&1&0&0\\
0&0&-1&2&-1&0&0&0&-1&1&0\\
0&0&0&-1&2&-1&0&0&0&-1&1\\
0&0&0&0&-1&2&0&0&0&0&-1\\
0&-1&0&0&0&0&2&-1&0&0&0\\
0&1&-1&0&0&0&-1&2&-1&0&0\\
0&0&1&-1&0&0&0&-1&2&-1&0\\
0&0&0&1&-1&0&0&0&-1&2&-1\\
0&0&0&0&1&-1&0&0&0&-1&2
\end{pmatrix}$}.
\end{center}
It can be checked by GAP that $q_{\mathcal{A}^s(6,5)}(v)$ is weakly non-negative, so that $\mathcal{A}^s(6,5)$ is not wild by Proposition \ref{tits-form}. On the other hand, we have shown in (1) that $\mathcal{A}^s(6,3)$ is tame. Therefore, both $\mathcal{A}^s(6,5)$ and $\mathcal{A}^s(6,4)$ are tame. Also, we find that $\mathcal{A}^s(5,4)$ is representation-finite since the Tits form $q_{\mathcal{A}^s(5,4)}(v)$ is weakly positive.

(3) We point out that $\mathcal{A}^s(6,2,1)$ is wild by
\begin{center}
$q_{\mathcal{A}^s(6,2,1)}\begin{pmatrix}\begin{matrix}
2&4&6&4&2&1\\
&4&4&& \\
&&2&&
\end{matrix}\end{pmatrix}=-1$,
\end{center}
and $\mathcal{A}^s(5,2,1)$ is tame by $q_{\mathcal{A}^s(5,2,1)}(v)$ being weakly non-negative and
\begin{center}
$q_{\mathcal{A}^s(5,2,1)}\begin{pmatrix}
\begin{matrix}
1 &2& 3 & 2&1\\
 & 2& 2& &  \\
 &&1&&
\end{matrix}\end{pmatrix}=0$.
\end{center}
We also find that $\mathcal{A}^s(4,2,1)$ and $\mathcal{A}^s(3,2,1)$ are representation-finite since $q_{\mathcal{A}^s(4,2,1)}(v)$ is weakly positive. Similar to the case $\mathcal{A}^s(6,2,1)$, $\mathcal{A}^s(5,3,1)$ is wild by
\begin{center}
$q_{\mathcal{A}^s(5,3,1)}\begin{pmatrix}\begin{matrix}
1&1&3&4&2\\
&2&4&3& \\
&&2&&
\end{matrix}\end{pmatrix}=-1$.
\end{center}
Similar to the case $\mathcal{A}^s(6,5)$, $q_{\mathcal{A}^s(4,3,2,1)}(v)$ is weakly non-negative and $q_{\mathcal{A}^s(4,3,1)}(v)$ is not weakly positive, so that $\mathcal{A}^s(4,3,2,1)$, $\mathcal{A}^s(4,3,2)$ and $\mathcal{A}^s(4,3,1)$ are tame.

(4) We observe that if $\lambda^s$ does not contain one of $(8,2), (7,3), (6,5), (6,2,1)$ and $(5,3,1)$, then $\lambda^s$ is listed in the above. Thus, all of the remaining cases are wild.

In the following, we may check whether a tame shifted-staircase algebra is concealed or not. Although we can also check that $\mathcal{A}^s(\lambda^s)$ is critical or not by checking the Tits form of all convex subcategories of $\mathcal{A}^s(\lambda^s)$, we would like to trust the list of critical algebras in \cite{B-critical} since the list is also given independently in \cite{HV-tame-concealed}.

We observe that $\mathcal{A}^s(7,2)$ is the critical algebra numbered 18 in \cite{B-critical}, $\mathcal{A}^s(6,3)$ is the critical algebra numbered 93 in \cite{B-critical} and $\mathcal{A}^s(5,2,1)$ is the critical algebra numbered 14 in \cite{B-critical}. Thus, $\mathcal{A}^s(7,2)$, $\mathcal{A}^s(6,3)$ and $\mathcal{A}^s(5,2,1)$ are tame concealed. This indicates that $\mathcal{A}^s(6,5)$ and $\mathcal{A}^s(6,4)$ are not tame concealed because both of them have $\mathcal{A}^s(6,3)$ as a proper convex subcategory. Lastly, since $\mathcal{A}^s(4,3,2,1)$, $\mathcal{A}^s(4,3,2)$ and $\mathcal{A}^s(4,3,1)$ contain the following critical algebra (see \cite[Lemma 3.1]{B-critical}) as a proper convex subcategory,
\begin{center}
$\xymatrix@C=0.5cm@R=0.2cm{\circ\ar[dr]& &\circ\ar[dl]\ar[dr]\ar@{.}[dd]&\\  &\circ\ar[dl]\ar[dr]& &\circ\ar[dl]\\ \circ & &\circ  & }$
\end{center}
they are not tame concealed.
\end{proof}

\begin{corollary}
A shifted-staircase algebra $\mathcal{A}^s(\lambda^s)$ is $\tau$-tilting finite if and only if the shifted partition $\lambda^s$ is one of $(n)$, $(m-1,1)$ with $m\geqslant 3$, $(3,2)$, $(4,2)$, $(5,2)$, $(6,2)$, $(4,3)$, $(5,3)$, $(5,4)$, $(3,2,1)$ and $(4,2,1)$.
\end{corollary}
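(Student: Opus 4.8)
The plan is to reduce the statement immediately to the representation-type classification already established in Theorem \ref{rep-type-shifted}. First I would recall that, by the preceding proposition, $\mathcal{A}^s(\lambda^s)$ is strongly simply connected for every shifted partition $\lambda^s$, and in particular simply connected. This places $\mathcal{A}^s(\lambda^s)$ squarely within the scope of Theorem \ref{result-strongly simply}, so that $\mathcal{A}^s(\lambda^s)$ is $\tau$-tilting finite if and only if it is representation-finite. This single equivalence is the entire conceptual content of the corollary.

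With this equivalence in hand, the second step is purely a matter of reading off the representation-finite cases from the classification. Theorem \ref{rep-type-shifted} sorts $\mathcal{A}^s(\lambda^s)$ into representation-finite, tame concealed, tame non-concealed, and wild types, and its representation-finite branch consists precisely of $(n)$, $(m-1,1)$ with $m\geqslant 3$, $(3,2)$, $(4,2)$, $(5,2)$, $(6,2)$, $(4,3)$, $(5,3)$, $(5,4)$, $(3,2,1)$ and $(4,2,1)$. Combining this list with the equivalence from the first step yields exactly the claimed collection of $\tau$-tilting finite shifted-staircase algebras, with no case left over.

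No genuine obstacle remains at this stage, and I would present the argument in a single short paragraph. The real difficulty has already been discharged inside the proof of Theorem \ref{rep-type-shifted}: the minimal wild algebras $\mathcal{A}^s(8,2)$, $\mathcal{A}^s(7,3)$, $\mathcal{A}^s(6,5)$, $\mathcal{A}^s(6,2,1)$, $\mathcal{A}^s(5,3,1)$, together with the critical convex subcategories and the weakly-non-negative Tits-form computations, are what pin down the representation-finite boundary. Thus the corollary is a direct translation of that theorem through Theorem \ref{result-strongly simply}, and nothing further needs to be verified.
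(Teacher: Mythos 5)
Your proposal is correct and is exactly the paper's (implicit) argument: the paper states this corollary without proof precisely because the strong simple connectedness of $\mathcal{A}^s(\lambda^s)$ together with Theorem \ref{result-strongly simply} reduces $\tau$-tilting finiteness to representation-finiteness, and Theorem \ref{rep-type-shifted} then supplies the list. Nothing is missing.
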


We define
\begin{center}
$\mathcal{C}:=\left \{ C_n\mid \begin{matrix}
C_n\ \text{is the algebra presented by a triangle quiver with }\\
\frac{n(n+1)}{2}\ \text{vertices and all possible commutativity relations}
\end{matrix}
\right \}$.
\end{center}

\begin{corollary}
Let $C_n\in \mathcal{C}$. Then, the algebra $C_n$ is $\tau$-tilting finite if and only if $n\leqslant 3$.
\end{corollary}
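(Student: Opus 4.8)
The plan is to reduce the statement entirely to the classification of $\tau$-tilting finite shifted-staircase algebras obtained just above. As already noted in the text, the algebra $C_n$ presented by a triangle quiver with $\frac{n(n+1)}{2}$ vertices and all possible commutativity relations is, by construction, the shifted-staircase algebra $\mathcal{A}^s(\lambda^s)$ with $\lambda^s = (n, n-1, \ldots, 2, 1)$: the shifted Young diagram $Y(n, n-1, \ldots, 2, 1)$ is exactly the triangle in which each row has one fewer box than the one above and is shifted one step to the right, so that $Q_{\lambda^s}$ coincides with the triangle quiver $Q_{tri}$ and Definition \ref{def-shifted-staircase} recovers $C_n$.

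Granting this identification, I would invoke the preceding corollary, after which it remains only to decide, for each $n$, whether $(n, n-1, \ldots, 2, 1)$ occurs in the tabulated list. For $n \leqslant 3$ I would verify the three cases directly: $(1)$ has the form $(n)$, the partition $(2,1)$ has the form $(m-1,1)$ with $m=3$, and $(3,2,1)$ is listed explicitly; hence $C_1$, $C_2$ and $C_3$ are $\tau$-tilting finite. For $n \geqslant 4$ I would observe that every shifted partition appearing in the list has at most three parts, whereas $(n, n-1, \ldots, 2, 1)$ has exactly $n \geqslant 4$ parts and is therefore absent; thus $C_n$ is $\tau$-tilting infinite.

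A more structural way to treat $n \geqslant 4$, which I would mention as a cross-check, is to use the convexity statement for shifted partitions together with Theorem \ref{rep-type-shifted}: since $(4,3,2,1) \leq (n, n-1, \ldots, 2, 1)$ whenever $n \geqslant 4$, the algebra $\mathcal{A}^s(4,3,2,1)$ is a convex subcategory, and hence an idempotent truncation $eC_ne$, of $C_n$. Because $\mathcal{A}^s(4,3,2,1)$ is representation-infinite (in fact tame non-concealed) and strongly simply connected, Theorem \ref{result-strongly simply} shows it is $\tau$-tilting infinite, and then Lemma \ref{quotient and idempotent}(2) forces $C_n$ to be $\tau$-tilting infinite as well. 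I do not expect any genuine obstacle here: all the substance is already carried by the classification of shifted-staircase algebras, and the only points needing care are the clean identification $C_n = \mathcal{A}^s(n, n-1, \ldots, 2, 1)$ and the elementary remark that a triangular shifted partition simply has too many parts to appear in the list once $n \geqslant 4$.
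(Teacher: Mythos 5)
Your proposal is correct and follows the paper's own route: the paper's proof is precisely the one-line identification $C_n = \mathcal{A}^s(n,n-1,\ldots,2,1)$ combined with the preceding corollary classifying $\tau$-tilting finite shifted-staircase algebras, which you spell out in more detail (including the explicit check that $(1)$, $(2,1)$, $(3,2,1)$ appear in the list while any shifted partition with at least four parts does not). Your additional cross-check via the convex subcategory $\mathcal{A}^s(4,3,2,1)$, Theorem \ref{result-strongly simply} and Lemma \ref{quotient and idempotent}(2) is sound and consistent with the paper's methods, but it is supplementary rather than a genuinely different argument.
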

\begin{proof}
It is obvious that $C_n$ is the algebra $\mathcal{A}^s(\lambda^s)$ with $\lambda^s:=(n, n-1, \ldots,2,1)$.
\end{proof}

At the end of this section, we may distinguish the following special cases. Let $\vec{A}_n$ be the path algebra of Dynkin type $\mathbb{A}_n$ associated with linear orientation. We define
\begin{center}
$\mathcal{D}:=\left \{ D_{n}\mid D_{n}\ \text{is the tensor product}\ \vec{A}_{2n+1}\otimes_K \vec{A}_{n} \right \}$
\end{center}
and
\begin{center}
$\mathcal{E}:=\left \{ E_n\mid E_n\ \text{is the Auslander algebra of}\ \vec{A}_{2n}\right \}$.
\end{center}
Then, $D_n$ is the staircase algebra $\mathcal{A}(\lambda)$ with $\lambda=(n^{2n+1})$ and $E_n$ is a quotient algebra of the shifted-staircase algebra $\mathcal{A}^s(\lambda^s)$ with $\lambda^s=(2n, 2n-1, \ldots,1)$, modulo some monomial relations. It is shown in \cite[Corollary 1.13]{La-derived-staircase} that $D_n\in \mathcal{D}$ is derived equivalent to $E_n\in \mathcal{E}$.

We recall the definition of derived equivalence as follows. Let $\mathsf{D^b(mod}\ A)$ be the derived category of bounded complexes of modules from $\mathsf{mod}\ A$, which is the localization of the homotopy category $\mathsf{K^b(proj}\ A)$ with respect to quasi-isomorphisms. Then, $\mathsf{D^b(mod}\ A)$ is a triangulated category. We recall that two algebras $A$ and $B$ are said to be derived equivalent if their derived categories $\mathsf{D^b(mod}\ A)$ and $\mathsf{D^b(mod}\ B)$ are equivalent as triangulated categories.

\begin{remark}
Prof. Ariki has pointed out to me that the derived equivalence between $D_2$ and $E_2$ gives an example that derived equivalence does not necessarily preserve the $\tau$-tilting finiteness. Indeed, Proposition \ref{result-triangular-matrix} implies that $E_2$ is $\tau$-tilting finite because $T_2(\vec{A}_{4})=\vec{A}_{2}\otimes \vec{A}_{4}$ is $\tau$-tilting finite by Corollary \ref{result-rectangle}, while $D_2$ is $\tau$-tilting infinite by Corollary \ref{result-rectangle}.
\end{remark}

\section{An example}
Since we have understood the $\tau$-tilting finiteness of simply connected algebras $A$ by Theorem \ref{result-strongly simply}, the next step should be to explore some combinatorial properties of (support) $\tau$-tilting $A$-modules. In particular, one may ask how many support $\tau$-tilting modules does $A$ have? As an example, we look at the number $\#\mathsf{s\tau\text{-}tilt}\ \Lambda_n$ for a specific $\tau$-tilting finite simply connected algebra $\Lambda_n$.

We denote by $\mathsf{supp}\ M:=\{e_i\mid Me_i\neq 0\}$ the support of a right $A$-module $M$. We say that a right $A$-module $M$ has support-rank $s$ if $\#\mathsf{supp}\ M=s$. We denote by $|M|$ the number of isomorphism classes of indecomposable direct summands of $M$, and by $a_s(A)$ the number of pairwise non-isomorphic basic support $\tau$-tilting $A$-modules with support-rank $s$ for $0\leqslant s\leqslant \left | A \right |$. Since the support $\tau$-tilting $A$-module with support-rank $0$ is unique, we have $a_0(A)=1$. Since each simple $A$-module $S_i$ is an $A/A(1-e_i)A$-module and $A/A(1-e_i)A\simeq K$, the support $\tau$-tilting $A$-modules with support-rank $1$ are exactly the simple $A$-modules. Hence, $a_1(A)=\left | A \right |$. Besides, we have
\begin{center}
$\#\mathsf{s\tau\text{-}tilt}\ A=\sum\limits_{s=0}^{\left | A \right |}a_s(A)$.
\end{center}
Note that $a_{\left | A \right |}(A)$ is just the number of pairwise non-isomorphic basic $\tau$-tilting $A$-modules.

Although the number $\#\mathsf{s\tau\text{-}tilt}\ A$ can be determined for some classes of algebras, such as path algebras of Dynkin type \cite{ONFR-number-tilting}, preprojective algebras of Dynkin type \cite{Mizuno}, etc., it is difficult to find the number $\#\mathsf{s\tau\text{-}tilt}\ A$ for general algebras.

We explain the reasons as follows. If $A$ is a path algebra of Dynkin type, we may find some recursive relations between $a_{s-1}(A)$ and $a_s(A)$ such that we can get a formula to calculate $\#\mathsf{s\tau\text{-}tilt}\ A$. If $A$ is a preprojective algebra of Dynkin type $\Delta$, we may construct a one-to-one correspondence between the (pairwise non-isomorphic basic) support $\tau$-tilting $A$-modules and the elements in the Weyl group $W_\Delta$ associated with $\Delta $. Since the number of elements in $W_\Delta$ is known, we get the number $\#\mathsf{s\tau\text{-}tilt}\ A$.

However, it is difficult to find such a recursive relation or such a one-to-one correspondence in general, so that we can only determine the number $\#\mathsf{s\tau\text{-}tilt}\ A$ by direct computation. We have tried to construct recursive relations between $a_{s-1}(A)$ and $a_s(A)$ for some cases. Here, we present a $\tau$-tilting finite simply connected algebra as an example, and the proof is given in the Appendix A.

\begin{example}\label{example-recursive}
Let $\Lambda_3$ be the path algebra of $\xymatrix@C=0.5cm{\circ&\circ\ar[l]\ar[r]&\circ}$ and $\Lambda_n:=KQ_n/I$ ($n\geqslant 4$) the algebra presented by the following quiver $Q_n$ with $I=\langle\alpha\mu-\beta\nu\rangle$.
\begin{center}
$Q_n:\vcenter{\xymatrix@C=0.7cm@R=0.2cm{&2\ar[dr]^{\mu}&&&&&\\ 1\ar[ur]^{\alpha}\ar[dr]_{\beta}\ar@{.}[rr]&&4\ar[r]&5\ar[r]&\cdots \ar[r]&n-1\ar[r]&n\\ &3\ar[ur]_{\nu}&&&&&}}$
\end{center}
Then, based on the symbols above, we have
\begin{description}\setlength{\itemsep}{-3pt}
  \item[(1)] $a_n(\Lambda_n)=a_{n-1}(\Lambda_{n})-\frac{1}{n-2}\binom{2n-6}{n-3}$.
  \item[(2)] $a_{n-1}(\Lambda_n)=a_{n-1}(\Lambda_{n-1})+\frac{3n-7}{2n-4}\binom{2n-4}{n-3}+2\cdot \frac{(5n-11)\cdot (2n-6)!}{(n-3)!\cdot (n-1)!}+\sum\limits_{i=4}^{n-1}a_{i-1}(\Lambda_{i-1})\cdot \frac{1}{n-i+1}\binom{2(n-i)}{n-i}$.
  \item[(3)] $a_{n-2}(\Lambda_n)=a_{n-2}(\Lambda_{n-1})+a_{n-3}(\Lambda_n)+\frac{1}{n-2}\binom{2n-6}{n-3}$.
  \item[(4)] $a_s(\Lambda_n)=a_s(\Lambda_{n-1})+a_{s-1}(\Lambda_n)$ for any $1\leqslant s \leqslant n-3$.
\end{description}
These recursive formulas enable us to compute $\#\mathsf{s\tau\text{-}tilt}\ \Lambda_n$ step by step. For example,
\renewcommand\arraystretch{1.2}
\begin{center}
\begin{tabular}{c|cccccccccc|cc}
\diagbox{$n$}{$a_s(\Lambda_n)$}{$s$}&$0$&$1$&$2$&$3$&$4$&$5$&$6$&$7$&8&9&$\#\mathsf{s\tau\text{-}tilt}\ \Lambda_n$  \\ \hline
$4$&1&4&10&16&15&&&&&&46\\
$5$&1&5&15&33&54&52&&&&&160\\
$6$&1&6&21&54&113&192&187&&&&574\\
$7$&1&7&28&82&195&401&700&686&&&2100\\
$8$&1&8&36&118&313&714&1456&2592&2550&&7788\\
$9$&1&9&45&163&476&1190&2646&5307&9702&9570&29172
\end{tabular}.
\end{center}
\end{example}

The following statement implies that $a_s(A)$ is just the number of isomorphism classes of support $\tau$-tilting $A$-modules with $s$ indecomposable direct summands.
\begin{proposition}{\rm{(\cite[Proposition 1.8]{Ada-nakayama})}}\label{support-rank}
Let $M$ be a support $\tau$-tilting $A$-module. Then, $\left | M \right |=\#\mathsf{supp}\ M$.
\end{proposition}

We show that $\Lambda_n$ is a tilted algebra of Dynkin type $\mathbb{D}_n$ and therefore, $\Lambda_n$ is representation-finite following \cite[VIII, Lemma 3.2]{ASS}. Let $\vec{D}_n$ ($n\geqslant 4$) be a path algebra with quiver:
\begin{center}
$\vcenter{\xymatrix@C=0.7cm@R=0.03cm{1\ar[dr]&&&&&\\ &3\ar[r]&4\ar[r]&\cdots \ar[r]&n-1\ar[r]&n \\2\ar[ur]&&&&&}}$.
\end{center}
We denote by $P_i$ (resp., $I_i$) the indecomposable projective (resp., injective) $\vec{D}_n$-modules. Then, by Definition-Theorem \ref{def-left-mutation}, it is easy to find that
\begin{center}
$\mu^-_{P_3}(\vec{D}_n)=P_1\oplus P_2\oplus I_n\oplus P_4\oplus \cdots \oplus P_{n-1}\oplus P_n$,
\end{center}
and $\mu^-_{P_3}(\vec{D}_n)$ is a $\tau$-tilting $\vec{D}_n$-module. Also, $\mu^-_{P_3}(\vec{D}_n)$ is a tilting $\vec{D}_n$-module since tilting and $\tau$-tilting coincide over a path algebra. Then, we have $\Lambda_n=\mathsf{End}_{\vec{D}_n} (\mu^-_{P_3}(\vec{D}_n))$.

\appendix
\section{The proof of Example \ref{example-recursive}.}
For any $M\in \mathsf{mod}\ A$, we denote by $\mathsf{add}(M)$ (resp., $\mathsf{Fac}(M)$) the full subcategory of $\mathsf{mod}\ A$ whose objects are direct summands (resp., factor modules) of finite direct sums of copies of $M$. Moreover, we often describe $A$-modules via their composition series. For example, each simple module $S_i$ is written as $i$, and $\substack{1\\2}$ is an indecomposable $A$-module $M$ with a unique simple submodule $S_2$ such that $M/S_2\simeq S_1$.

Let $(M,P)$ be a pair with $M\in \mathsf{mod}\ A$ and $P\in \mathsf{proj}\ A$. It is called a support $\tau$-tilting (resp., almost complete support $\tau$-tilting) pair if $M$ is $\tau$-rigid, $\mathsf{Hom}_A(P,M)=0$ and $|M|+|P|=|A|$ (resp., $|M|+|P|=|A|-1$). Obviously, $(M,P)$ is a support $\tau$-tilting pair if and only if $M$ is a $\tau$-tilting $\left ( A/Ae A \right )$-module and $P=eA$. It is also useful to recall from \cite[Theorem 2.18]{AIR} that any basic almost complete support $\tau$-tilting pair is a direct summand of exactly two basic support $\tau$-tilting pairs.

We recall the definition of left mutations, which is the core of $\tau$-tilting theory.
\begin{definitiontheorem}(\cite[Definition 2.19, Theorem 2.30]{AIR})\label{def-left-mutation}
Let $T=M\oplus N$ be a basic support $\tau$-tilting $A$-module with an indecomposable direct summand $M$ satisfying $M\notin \mathsf{Fac}(N)$. We take an exact sequence with a minimal left $\mathsf{add}(N)$-approximation $f$:
\begin{center}
$M\overset{f}{\longrightarrow}N'\longrightarrow \mathsf{coker}\ f \longrightarrow 0$.
\end{center}
We call $\mu_M^-(T):=(\mathsf{coker}\ f )\oplus N$ the left mutation of $T$ with respect to $M$, which is again a basic support $\tau$-tilting $A$-module.
\end{definitiontheorem}

We may construct a graph $\mathcal{H}(\mathsf{s\tau\text{-}tilt}\ A)$ by drawing an arrow from $M$ to $N$ if $N$ is a left mutation of $M$. It has been proven that $\mathcal{H}(\mathsf{s\tau\text{-}tilt}\ A)$ is exactly the Hasse quiver on the poset $\mathsf{s\tau\text{-}tilt}\ A$ with respect to a partial order $\leq $. Here, for any $M, N\in \mathsf{s\tau\text{-}tilt}\ A$, $N\leq M$ if $\mathsf{Fac}(N) \subseteq \mathsf{Fac}(M)$. We point out that any $\tau$-tilting finite algebra $A$ admits a finite connected Hasse quiver $\mathcal{H}(\mathsf{s\tau\text{-}tilt}\ A)$.

We denote by $Q_s(A)$ the set of pairwise non-isomorphic basic support $\tau$-tilting $A$-modules with support-rank $s$. In order to give a proof of Example \ref{example-recursive}, we need the following observation. Let ${\bf{A}}_n$ be the path algebra presented by
\begin{center}
$\xymatrix@C=0.7cm{1\ar[r]^{ }&2\ar[r]^{ }&3\ar[r]&\cdots \ar[r]&n-1\ar[r]&n}$.
\end{center}
Then, the indecomposable projective ${\bf{A}}_n$-module $P_1$ at vertex 1 is the unique indecomposable projective-injective ${\bf{A}}_n$-module.
\begin{lemma}{\rm{(\cite[Proposition 2.32, Theorem 2.33]{Ada-nakayama})}}\label{lemma-path-alg-type-A}
With the above notations, any $\tau$-tilting ${\bf{A}}_n$-module $T$ contains $P_1$ as an indecomposable direct summand. Moreover, there exists a bijection
\begin{center}
$Q_n({\bf{A}}_n)\longleftrightarrow Q_{n-1}({\bf{A}}_n)$
\end{center}
given by $Q_n({\bf{A}}_n)\ni T \longmapsto T/P_1 \in Q_{n-1}({\bf{A}}_n)$.
\end{lemma}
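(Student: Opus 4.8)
The plan is to prove the two assertions in turn, using throughout the combinatorial description of $\mathsf{mod}\ {\bf A}_n$: its indecomposables are the interval modules $[i,j]$ with composition factors $S_i,S_{i+1},\dots,S_j$ (for $1\leqslant i\leqslant j\leqslant n$), where $P_i=[i,n]$ and the injective at $j$ is $I_j=[1,j]$. In particular $P_1=[1,n]$ is the unique indecomposable that is simultaneously projective and injective, and it is the only indecomposable with full support. I will freely use that ${\bf A}_n$ is hereditary, so that $\tau$-rigidity coincides with $\mathsf{Ext}^1$-rigidity and no stable-module corrections intervene.

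For the first assertion I would show that $T\oplus P_1$ is $\tau$-rigid for every $\tau$-rigid module $T$. Since $P_1$ is projective we have $\tau P_1=0$, hence $\tau(T\oplus P_1)=\tau T$ and it suffices to check $\mathsf{Hom}_{{\bf A}_n}(T\oplus P_1,\tau T)=0$. The summand $\mathsf{Hom}_{{\bf A}_n}(T,\tau T)$ vanishes because $T$ is $\tau$-rigid, and $\mathsf{Hom}_{{\bf A}_n}(P_1,\tau T)=0$ follows from Auslander--Reiten duality together with $\mathsf{Ext}^1_{{\bf A}_n}(T,P_1)=0$, the latter holding because $P_1$ is injective. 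Now if $T$ is $\tau$-tilting but $P_1\notin\mathsf{add}(T)$, then $T\oplus P_1$ would be a $\tau$-rigid module with $|A|+1$ pairwise non-isomorphic indecomposable summands, contradicting the bound that a $\tau$-rigid module has at most $|A|$ such summands \cite{AIR}. Hence $P_1$ is a direct summand of $T$.

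The heart of the second assertion is the well-definedness of $T\mapsto T/P_1$. Writing $T=P_1\oplus T'$ with $T'$ $\tau$-rigid and $|T'|=n-1$, I first claim $T'$ is not sincere. Suppose it were, and let $[i_0,n]$ be the summand of $T'$ reaching vertex $n$ with smallest left endpoint; since $[1,n]=P_1\notin\mathsf{add}(T')$, we have $i_0\geqslant 2$. Sincerity forces some summand $[a,b]$ of $T'$ to cover the vertex $i_0-1$, so $a\leqslant i_0-1\leqslant b$; minimality of $i_0$ excludes $b=n$, whence $b\leqslant n-1$. A direct computation of extensions of interval modules over the linearly oriented ${\bf A}_n$ gives $\mathsf{Ext}^1_{{\bf A}_n}([a,b],[i_0,n])\neq 0$ exactly when $a<i_0\leqslant b+1$ and $b<n$, which is precisely the range just obtained; this contradicts the rigidity of $T$. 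Therefore some vertex $v$ lies outside the support of $T'$, so $\mathsf{Hom}_{{\bf A}_n}(P_v,T')=0$, and $(T',P_v)$ is a support $\tau$-tilting pair (it is $\tau$-rigid with $|T'|+|P_v|=n$). Thus $T'$ is support $\tau$-tilting, and by Proposition \ref{support-rank} its support-rank equals $|T'|=n-1$, i.e.\ $T'\in Q_{n-1}({\bf A}_n)$.

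Finally I would verify bijectivity. Injectivity is immediate from Krull--Schmidt, since $T'$ determines $T=P_1\oplus T'$. For surjectivity, take $M\in Q_{n-1}({\bf A}_n)$; as $M$ has support-rank $n-1$ it is not sincere, so $P_1=[1,n]$ cannot be a summand of $M$, and $P_1\oplus M$ has $n$ pairwise non-isomorphic indecomposable summands. By the argument of the first assertion $P_1\oplus M$ is $\tau$-rigid, hence $\tau$-tilting (a $\tau$-rigid module with $|A|$ summands is $\tau$-tilting \cite{AIR}); it is sincere because $P_1$ is, so $P_1\oplus M\in Q_n({\bf A}_n)$ and maps to $M$. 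I expect the non-sincerity step to be the main obstacle: the clean conceptual claim ``removing the projective-injective summand leaves a non-sincere module'' must be extracted from the explicit rigidity constraints on interval modules, and one has to ensure that the extremal choice of $[i_0,n]$ genuinely produces a forbidden extension, rather than merely a gap in the support that some other summand might conceivably fill.
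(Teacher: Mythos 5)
Your proof is correct, and it reaches the lemma by a genuinely different route than the paper. The paper proves the first assertion through the mutation/poset structure of $\mathsf{s\tau\text{-}tilt}\ {\bf{A}}_n$: any $\tau$-tilting module $M\not\simeq {\bf{A}}_n$ satisfies $M\leq \mu_{P_i}^-({\bf{A}}_n)$ for some $i$, and a case analysis of $\tau$-tilting modules $T=P_1\oplus U$ (every indecomposable summand of $U$ supported at vertex $1$ is a uniserial quotient of $P_1$) shows via Definition-Theorem \ref{def-left-mutation} that $\mu_{P_1}^-(T)=U$; since $U$ then has only $n-1$ summands, nothing sincere can lie below it, which simultaneously forces $P_1$ into every $\tau$-tilting module and gives well-definedness of $T\mapsto T/P_1$. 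You replace all of this with a direct homological argument: $P_1$ is projective-injective and ${\bf{A}}_n$ is hereditary, so $\mathsf{Hom}(P_1,\tau T)\cong D\,\mathsf{Ext}^1(T,P_1)=0$, hence $T\oplus P_1$ is $\tau$-rigid for every $\tau$-rigid $T$, and the AIR bound on the number of indecomposable summands of a $\tau$-rigid module forces $P_1\in\mathsf{add}(T)$; well-definedness is then settled by your separate interval-module $\mathsf{Ext}$ computation showing $T/P_1$ is never sincere (the surjectivity steps in the two proofs are essentially identical). Your route is more elementary and more portable --- the first assertion as you prove it holds for any projective-injective module over any hereditary algebra, with no mutation machinery --- whereas the paper's route yields as a by-product the explicit formula $\mu_{P_1}^-(P_1\oplus U)=U$, which is what the later appendix arguments (e.g., Propositions \ref{lambda-n-2} and \ref{lambda-n}) actually lean on when they cite this lemma; your proof establishes the stated lemma but not that extra mutation fact. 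One point you should spell out rather than assert: the duality $\mathsf{Hom}(N,\tau M)\cong D\,\mathsf{Ext}^1(M,N)$ with no stable correction is legitimate here because, over a hereditary algebra, quotients of injectives are injective and $\tau M$ has no injective direct summands, so every map into $\tau M$ factoring through an injective vanishes; this is exactly the ``no stable-module corrections intervene'' claim your argument hinges on, and without it the appeal to Auslander--Reiten duality only controls $\mathsf{Hom}$ modulo injectives, which is vacuous for maps out of the injective module $P_1$.
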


It is immediate that $a_n({\bf{A}}_n)=a_{n-1}({\bf{A}}_n)$. This can also be verified by the formula
\begin{center}
$a_s({\bf{A}}_n)=\frac{n-s+1}{n+1}\binom{n+s}{s}$,
\end{center}
which is given by \cite{ONFR-number-tilting}.

In the following, we divide the proof of Example \ref{example-recursive} into several propositions. As a beginning, we find the number $\#\mathsf{s\tau\text{-}tilt}\ \Lambda_4$ by computing the left mutations starting with $\Lambda_4$. Since the number is small, we can do this by hand.
\begin{example}
Let $P_i$ be the indecomposable projective $\Lambda_4$-module at vertex $i$. Then,
\begin{center}
$P_1=\begin{smallmatrix}
 & 1 & \\
2 &  & 3\\
 & 4 &
\end{smallmatrix},
P_2=\begin{smallmatrix}
2\\
4
\end{smallmatrix},
P_3=\begin{smallmatrix}
3\\
4
\end{smallmatrix}$ and $P_4=4$.
\end{center}
By direct calculation, we find that (1) all $\tau$-tilting $\Lambda_4$-modules are
\begin{center}
\renewcommand\arraystretch{1.8}
\begin{tabular}{c|c|c|c|c}
$\substack{\\2\\}\substack{1\\\ \\ 4}\substack{\\3\\}\oplus \substack{2\\4}\oplus \substack{3\\4}\oplus \substack{4}$ & $\substack{\\2\\}\substack{1\\\ \\ 4}\substack{\\3\\}\oplus \substack{2\\4}\oplus \substack{3\\4}\oplus \substack{2\\ \\}\substack{\ \\ \\ 4}\substack{3\\ \\}$ & $\substack{\\2\\}\substack{1\\\ \\ 4}\substack{\\3\\}\oplus \substack{1\\3}\oplus \substack{1\\2}\oplus \substack{\\ \\2}\substack{1\\\ \\}\substack{\\ \\3}$ & $\substack{\\2\\}\substack{1\\\ \\ 4}\substack{\\3\\}\oplus \substack{1\\2}\oplus \substack{2\\4}\oplus \substack{2}$ &$\substack{\\2\\}\substack{1\\\ \\ 4}\substack{\\3\\}\oplus \substack{1\\3}\oplus \substack{1\\2}\oplus \substack{4}$ \\ \hline

$\substack{\\2\\}\substack{1\\\ \\ 4}\substack{\\3\\}\oplus \substack{2\\4}\oplus \substack{1\\2}\oplus \substack{4}$& $\substack{\\2\\}\substack{1\\\ \\ 4}\substack{\\3\\}\oplus \substack{2\\4}\oplus \substack{2}\oplus \substack{2\\ \\}\substack{\ \\ \\ 4}\substack{3\\ \\}$ & $\substack{\\2\\}\substack{1\\\ \\ 4}\substack{\\3\\}\oplus \substack{1\\3}\oplus \substack{3}\oplus \substack{\\ \\2}\substack{1\\\ \\}\substack{\\ \\3}$ & $\substack{\\2\\}\substack{1\\\ \\ 4}\substack{\\3\\}\oplus \substack{1\\3}\oplus \substack{3\\4}\oplus \substack{3}$ &$\substack{\\2\\}\substack{1\\\ \\ 4}\substack{\\3\\}\oplus \substack{3}\oplus \substack{2}\oplus \substack{2\\ \\}\substack{\ \\ \\ 4}\substack{3\\ \\}$ \\ \hline

$\substack{\\2\\}\substack{1\\\ \\ 4}\substack{\\3\\}\oplus \substack{1\\3}\oplus \substack{3\\4}\oplus \substack{4}$ & $\substack{\\2\\}\substack{1\\\ \\ 4}\substack{\\3\\}\oplus \substack{3}\oplus \substack{3\\4}\oplus \substack{2\\ \\}\substack{\ \\ \\ 4}\substack{3\\ \\}$ & $\substack{\\2\\}\substack{1\\\ \\ 4}\substack{\\3\\}\oplus \substack{2}\oplus \substack{1\\2}\oplus \substack{\\ \\2}\substack{1\\\ \\}\substack{\\ \\3}$ &  $\substack{1}\oplus \substack{1\\3}\oplus \substack{1\\2}\oplus \substack{4}$ &$\substack{\\2\\}\substack{1\\\ \\ 4}\substack{\\3\\}\oplus \substack{2}\oplus \substack{3}\oplus \substack{ \\ \\2}\substack{1\\ \\  }\substack{ \\ \\3}$
\end{tabular};
\end{center}
(2) all support $\tau$-tilting $\Lambda_4$-modules with support-rank 3 are
\begin{center}
\renewcommand\arraystretch{1.8}
\begin{tabular}{c|c|c|c|c|c}
$\substack{1\\3}\oplus \substack{1\\2}\oplus \substack{\\ \\2}\substack{1\\\ \\}\substack{\\ \\3}$ & $\substack{2}\oplus \substack{3}\oplus \substack{\\ \\2}\substack{1\\\ \\}\substack{\\ \\3}$ & $\substack{3}\oplus \substack{3\\4}\oplus \substack{2\\ \\}\substack{\ \\ \\ 4}\substack{3\\ \\}$ &$\substack{1\\3}\oplus \substack{3\\4}\oplus \substack{4}$ &$\substack{2\\4}\oplus \substack{1\\2}\oplus \substack{4}$ &   $\substack{1} \oplus \substack{1\\3}\oplus \substack{4}$ \\ \hline

$\substack{1\\3}\oplus \substack{3}\oplus \substack{\\ \\2}\substack{1\\\ \\}\substack{\\ \\3}$ & $\substack{2\\4}\oplus \substack{3\\4}\oplus \substack{2\\ \\}\substack{\ \\ \\ 4}\substack{3\\ \\}$  & $\substack{3}\oplus \substack{2}\oplus \substack{2\\ \\}\substack{\ \\ \\ 4}\substack{3\\ \\}$ & $\substack{2\\4}\oplus \substack{3\\4}\oplus \substack{4}$ & $\substack{2\\4}\oplus \substack{1\\2}\oplus \substack{2}$  & $\substack{1}\oplus \substack{1\\2}\oplus \substack{4}$ \\ \hline

$\substack{2}\oplus \substack{1\\2}\oplus \substack{\\ \\2}\substack{1\\\ \\}\substack{\\ \\3}$ & $\substack{2\\4}\oplus \substack{2}\oplus \substack{2\\ \\}\substack{\ \\ \\ 4}\substack{3\\ \\}$ & $\substack{1}\oplus \substack{1\\3}\oplus \substack{1\\2}$ & $\substack{1\\3}\oplus \substack{3\\4}\oplus \substack{3}$
\end{tabular};
\end{center}
(3) all support $\tau$-tilting $\Lambda_4$-modules with support-rank 2 are
\begin{center}
\renewcommand\arraystretch{1.8}
\begin{tabular}{c|c|c|c|c|c|c|c|c|c|c}
$\substack{3\\4}\oplus \substack{4}$ &
$\substack{2\\4}\oplus \substack{4}$ &
$\substack{1\\2}\oplus \substack{2}$ &
$\substack{1\\3}\oplus \substack{3}$&
$\substack{1}\oplus \substack{4}$&
$\substack{3\\4}\oplus \substack{3}$ &
$\substack{2\\4}\oplus \substack{2}$  &
$\substack{1\\2}\oplus \substack{1}$  &
$\substack{1\\3}\oplus \substack{1}$ &
$\substack{2}\oplus \substack{3}$
\end{tabular}.
\end{center}
Combining the above with $a_0(\Lambda_4)=1$ and $a_1(\Lambda_4)=4$, we conclude that $\#\mathsf{s\tau\text{-}tilt}\ \Lambda_4=46$.
\end{example}

\begin{proposition}\label{lambda-s}
For any $n\geqslant 4$ and $1\leqslant s \leqslant n-3$, we have
\begin{center}
$a_s(\Lambda_n)=a_s(\Lambda_{n-1})+a_{s-1}(\Lambda_n)$.
\end{center}
\end{proposition}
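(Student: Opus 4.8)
The plan is to fix $s$ with $1\le s\le n-3$ and to split the set $Q_s(\Lambda_n)$ of support $\tau$-tilting $\Lambda_n$-modules of support-rank $s$ according to whether vertex $n$ belongs to the support. Since $n$ is a sink whose only incoming arrow is $n-1\to n$, the simple module $S_n=P_n$ is projective, and I expect the two resulting pieces to be enumerated by the two terms $a_s(\Lambda_{n-1})$ and $a_{s-1}(\Lambda_n)$ respectively. By Proposition \ref{support-rank} I may pass freely between ``support-rank $s$'' and ``$s$ indecomposable summands'', so it suffices to establish the two counts.

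First I would handle the modules $M\in Q_s(\Lambda_n)$ with $Me_n=0$. As $n$ is a sink, the two-sided ideal $\Lambda_n e_n\Lambda_n$ consists exactly of the paths ending at $n$, whence $\Lambda_n/\Lambda_n e_n\Lambda_n\simeq\Lambda_{n-1}$. By the reduction of \cite{AIRRT} recalled in Lemma \ref{quotient and idempotent}, the poset $\mathsf{s\tau\text{-}tilt}(\Lambda_{n-1})$ is identified with the interval of $\mathsf{s\tau\text{-}tilt}\,\Lambda_n$ consisting of the pairs $(M,P)$ with $e_n\Lambda_n\in\mathsf{add}\,P$; under this identification a support $\tau$-tilting $\Lambda_n$-module has $Me_n=0$ if and only if it is (the inflation of) a support $\tau$-tilting $\Lambda_{n-1}$-module, and the number of summands, hence the support-rank, is preserved. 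Thus these modules are in bijection with $Q_s(\Lambda_{n-1})$ and contribute $a_s(\Lambda_{n-1})$.

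The remaining task, and the technical heart of the proof, is to show that the modules $M\in Q_s(\Lambda_n)$ with $Me_n\neq0$ are counted by $a_{s-1}(\Lambda_n)$. I would build a bijection $\Phi\colon Q_{s-1}(\Lambda_n)\longrightarrow\{M\in Q_s(\Lambda_n):Me_n\neq0\}$ that ``switches the sink $n$ on''. For $N\in Q_{s-1}(\Lambda_n)$ one has $\mathsf{Hom}_{\Lambda_n}(P_n,\tau N)\cong(\tau N)e_n$, so when $(\tau N)e_n=0$ the module $N\oplus P_n$ is already $\tau$-rigid and I set $\Phi(N)=N\oplus P_n$, gaining one summand supported at $n$. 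When $(\tau N)e_n\neq 0$ (in particular when $n$ already lies in the support of $N$) the module $N\oplus S_n$ fails to be $\tau$-rigid, and instead I would complete $N$ by a single left mutation (Definition-Theorem \ref{def-left-mutation}) along the arrow $n-1\to n$, replacing the summand reaching farthest into the tail by $P_{n-1}=\begin{smallmatrix}n-1\\ n\end{smallmatrix}$; this again raises the support-rank by exactly one and forces $n$ into the support. The inverse deletes the unique summand whose removal lowers the support-rank by one while retracting the support away from $n$.

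The hard part will be proving that $\Phi$ is well defined and bijective, and it is here that the hypothesis $s\le n-3$ is used. I would verify that for each $N$ precisely one of the two cases occurs and yields a $\tau$-rigid module of support-rank $s$ meeting vertex $n$; that the summand to be added, respectively deleted, is uniquely determined, using that $n$ is a sink of in-degree one, so that $S_n$ is the only indecomposable with top $S_n$ and $\tau$-rigidity at $n$ is governed entirely by the linear tail; and that the two assignments are mutually inverse. The bound $s\le n-3$ should ensure that the whole completion takes place inside the tail $4\to5\to\cdots\to n$ and never meets the commutativity relation $\alpha\mu-\beta\nu$ at the diamond, which is exactly the interference that forces the separate correction terms in the formulas for $a_n,a_{n-1}$ and $a_{n-2}$. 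Establishing this non-interaction, and thereby controlling $\tau$-rigidity locally near the sink, is the step I expect to demand the most care.
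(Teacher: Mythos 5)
Your first step---splitting $Q_s(\Lambda_n)$ according to whether $e_n$ lies in the support, and identifying the modules with $Me_n=0$ with $Q_s(\Lambda_{n-1})$ via $\Lambda_n/\Lambda_ne_n\Lambda_n\simeq\Lambda_{n-1}$---is correct and is exactly how the paper begins. The genuine gap is in the second half, the correspondence between $Q_{s-1}(\Lambda_n)$ and $\{M\in Q_s(\Lambda_n): Me_n\neq0\}$: both your case division and your construction fail. Write $[a,b]$ for the uniserial module with top $S_a$ and socle $S_b$ supported on the tail, as in the paper. Your dichotomy ``if $(\tau N)e_n=0$ add $S_n$, otherwise mutate'' is not the right one. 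Take $n=8$, $s=4\leqslant n-3$ and $N=[5,6]\oplus S_6\oplus S_8\in Q_3(\Lambda_8)$: one computes $\tau N=[6,7]\oplus S_7$, so $(\tau N)e_8=0$ and your first case applies, yet $S_8$ is already a summand of $N$, so $N\oplus S_8$ is not basic and its support-rank is still $3$. This also refutes your parenthetical claim that $Ne_n\neq0$ forces $(\tau N)e_n\neq0$; in fact, for $s\leqslant n-3$ every indecomposable summand supported at $n$ is an interval $[k,n]=P_k$, hence projective, and contributes nothing to $\tau N$.

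Your second case is worse than incomplete: it is internally inconsistent. Left mutation never raises support-rank (in Definition-Theorem \ref{def-left-mutation} the new summand is $\mathsf{coker}\,f$ with $f$ an $\mathsf{add}(N)$-approximation, so $\mathsf{Fac}$, hence the support, weakly shrinks), and ``replacing'' a summand cannot change the number of summands from $s-1$ to $s$. Moreover, the summand that must be added is in general neither $S_n$ nor $P_{n-1}=[n-1,n]$: for $N=[5,7]\oplus[6,7]\oplus S_7\in Q_3(\Lambda_8)$ one has $\tau N=[6,8]\oplus[7,8]\oplus S_8$, so both $N\oplus S_8$ and $N\oplus[7,8]$ fail to be $\tau$-rigid, and the unique element of $Q_4(\Lambda_8;e_8)$ lying over $N$ is $N\oplus[5,8]$. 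What the paper actually proves is the reverse assignment together with a structural lemma your sketch lacks: for $T\in Q_s(\Lambda_n;e_n)$ it takes the \emph{longest} interval summand $X=[n-t+1,n]$, shows $Te_{n-t}=0$ (this is where $s\leqslant n-3$ and the quiver shape enter), decomposes $T=W\oplus Z$ with $W$ a $\tau$-tilting module over $\Lambda_{[n-t+1,n]}\simeq{\bf A}_t$, and sets $\mathfrak{q}(T)=T/X$ using Lemma \ref{lemma-path-alg-type-A}; for surjectivity the summand glued onto $U\in Q_{s-1}(\Lambda_n)$ is $[i+1,n]$, where $i$ is the \emph{second largest} vertex of $\{4,\dots,n\}$ outside the support of $U$ (with $[4,n]$ in the boundary case). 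Identifying this summand and proving $Te_{n-t}=0$ are precisely the technical content of the proposition, so the proposal as it stands does not prove the statement.
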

\begin{proof}
Let $e_i$ be the idempotent of $\Lambda_n$ at vertex $i$. For any support $\tau$-tilting $\Lambda_n$-module $M$ satisfying $Me_n=0$, it is obvious that $M$ is a support $\tau$-tilting $\Lambda_{n-1}$-module. Then, let $Q_s(\Lambda_n; e_n)$ be the set of the support $\tau$-tilting $\Lambda_n$-modules $T$ with support-rank $s$ and $Te_n\neq0$. We show that there is a bijection
\begin{center}
$\mathfrak{q}: Q_s(\Lambda_n; e_n)\longrightarrow Q_{s-1}(\Lambda_n)$,
\end{center}
and then, the statement follows from this bijection.

Let $X$ be an indecomposable $\Lambda_n$-module with support-rank $t \leqslant n-3$ and $Xe_n\neq0$. Then, $X$ is an indecomposable module over a path algebra of type $\mathbb{A}$ and it corresponds to a root so that $X$ is of the form
\begin{center}
$\substack{S_{n-t+1}\\ \vdots\\ S_{n-1}\\ S_n}$.
\end{center}
In this case, we denote $X$ by $\left [ n-t+1, n \right ]$.

Let $T\in Q_s(\Lambda_n; e_n)$ and $(T,P)$ the corresponding support $\tau$-tilting pair. There exists at least one indecomposable direct summand of $T$, say $X$, which satisfies $Xe_n\neq0$ and we choose $X:=\left [ n-t+1, n \right ]$ of the largest possible length $t$.
\begin{description}\setlength{\itemsep}{-3pt}
  \item[(1)] We show that $Te_m=0$ for any arrow $m\longrightarrow n-t+1$. In fact, if $t=s$, it is true since $T$ is support-rank $s$. If $t\leqslant s-1$, the inequality $n-t+1\geqslant 5$ makes $m$ unique and $m=n-t$. By the maximality of $X$, the number of indecomposable direct summands $X'$ of $T$ with $X'e_n\neq0$ is at most $t$ and $t+|P|=t+n-s\leqslant n-1$. (Note that $X'e_{n-t}=0$ is obvious.) We consider the remaining indecomposable direct summand $Y$ of $T$ satisfying $Ye_n=0$. Suppose that $Ye_{n-t}\neq 0$. It is enough to consider the following five types of $Y$:
\begin{center}
$\begin{smallmatrix}
&S_1&\\
S_2&&S_3\\
&S_4&\\
&\vdots &\\
&S_{n-t}&\\
&\vdots &\\
&S_a&
\end{smallmatrix}$, $\begin{smallmatrix}
S_2&&S_3\\
&S_4&\\
&\vdots &\\
&S_{n-t}&\\
&\vdots &\\
&S_a&
\end{smallmatrix}$, $\begin{smallmatrix}
S_2\\
S_4\\
\vdots \\
S_{n-t}\\
\vdots\\
S_a
\end{smallmatrix}$, $\begin{smallmatrix}
S_3\\
S_4\\
\vdots \\
S_{n-t}\\
\vdots\\
S_a
\end{smallmatrix}$, $\begin{smallmatrix}
S_4\\
\vdots \\
S_{n-t}\\
\vdots\\
S_a
\end{smallmatrix}$,
\end{center}
where $4\leqslant n-t\leqslant a\leqslant n-1$. One can check that $\left [ n-t+1, a+1 \right ]$ is a submodule of $\tau Y$ for any type above. Then, $\mathsf{Hom}_{\Lambda_n}(X, \tau Y)\neq 0$ and it contradicts with $T\in Q_s(\Lambda_n; e_n)$. Therefore, we must have $Ye_{n-t}=0$.

\item[(2)] According to (1), we can divide $T$ into a direct sum $W\oplus Z$ such that $\mathsf{supp}\ W=\left \{ e_{n-t+1},\dots,e_{n-1},e_n \right \}$, $\mathsf{supp}\ W\cap \mathsf{supp}\ Z=\varnothing $ and $\mathsf{supp}\ Z$ does not contain $e_m$ with $m\longrightarrow n-t+1$. Then, we define
\begin{center}
$\Lambda_{[n-t+1, n]}:=\Lambda_n/\langle e_1+e_2+\cdots +e_{n-t}\rangle$.
\end{center}
Since $T\in Q_s(\Lambda_n; e_n)$ and the supports of $W$ and $Z$ are disjoint, $W$ is actually a support $\tau$-tilting $\Lambda_n$-module by repeatedly calculating the left mutations started at direct summands of $Z$. By Proposition \ref{support-rank}, $|W|=t$ since the support-rank of $W$ is $t$. Then, $W$ becomes a $\tau$-tilting $\Lambda_{[n-t+1, n]}$-module. We note that $X$ is the unique indecomposable projective-injective $\Lambda_{[n-t+1, n]}$-module since $\Lambda_{[n-t+1, n]}$ is isomorphic to the path algebra ${\bf{A}}_t$. By Lemma \ref{lemma-path-alg-type-A}, the quotient module $W/X$ is a support $\tau$-tilting $\Lambda_{[n-t+1, n]}$-module with support-rank $t-1$.

\item[(3)] Based on the analysis in (1) and (2), we define $U:=T/X$ for any $T\in Q_s(\Lambda_n; e_n)$. Since $T$ is a support $\tau$-tilting $\Lambda_n$-module with support-rank $s$, $U$ is a support $\tau$-tilting $\Lambda_n$-module with support-rank $s-1$ by $W/X\in Q_{t-1}(\Lambda_{[n-t+1, n]})$ and the fact that the supports of $W$ and $Z$ are disjoint. Thus, we may define the map from $Q_s(\Lambda_n; e_n)$ to $Q_{s-1}(\Lambda_n)$ by $\mathfrak{q}(T)=U$.
\end{description}

Next, we show that the map $\mathfrak{q}$ defined above is a bijection. On the one hand, we know that $\mathfrak{q}$ is an injection. By the analysis in $(2)$, we may define $T_1:=Z_1\oplus X_1\oplus V_1\in Q_s(\Lambda_n; e_n)$ and $T_2:=Z_2\oplus X_2\oplus V_2\in Q_s(\Lambda_n; e_n)$ such that
\begin{itemize}\setlength{\itemsep}{-3pt}
  \item $X_i=[n-t_i+1, n]$ for $i=1,2$,
  \item $\mathsf{supp}\ Z_1$ is included in $\{e_1,e_2,\ldots, e_{n-t_1-1}\}$,
  \item $\mathsf{supp}\ Z_2$ is included in $\{e_1,e_2,\ldots, e_{n-t_2-1}\}$,
  \item $\mathsf{supp}\ V_1=\{e_{n-t_1+1},e_{n-t_1+2},\ldots,e_{i_1-1}, e_{i_1+1},\ldots, e_n\}$ with exactly one $e_{i_1}$ satisfying $V_1e_{i_1}=0$ for $n-t_1+1\leqslant i_1\leqslant n$,
  \item $\mathsf{supp}\ V_2=\{e_{n-t_2+1},e_{n-t_2+2},\ldots,e_{i_2-1}, e_{i_2+1},\ldots, e_n\}$ with exactly one $e_{i_2}$ satisfying $V_2e_{i_2}=0$ for $n-t_2+1\leqslant i_2\leqslant n$.
\end{itemize}
Obviously, $X_1\neq X_2$ if and only if $t_1\neq t_2$. If $X_1=X_2$, $T_1\neq T_2$ implies $Z_1\oplus V_1\neq Z_2\oplus V_2$. Then, we assume that $X_1\neq X_2$. If we list the idempotents by increasing the subscripts, the last two idempotents outside of $\mathsf{supp}\ (Z_1\oplus V_1)$ must be $\{e_{n-t_1},e_{i_1}\}$ and the last two idempotents outside of $\mathsf{supp}\ (Z_2\oplus V_2)$ must be $\{e_{n-t_2},e_{i_2}\}$. Since $t_1\neq t_2$, we have $e_{n-t_1}\neq e_{n-t_2}$ so that $\mathsf{supp}\ (Z_1\oplus V_1)$ and $\mathsf{supp}\ (Z_2\oplus V_2)$ are different. Thus, we conclude that $\mathfrak{q}(T_1)\neq \mathfrak{q}(T_2)$ if $T_1\neq T_2\in Q_s(\Lambda_n; e_n)$.

On the other hand, $\mathfrak{q}$ is a surjection. We assume that $U\in Q_{s-1}(\Lambda_n)$. Since $s-1\leqslant n-4$, there are at least 4 idempotents of $\Lambda_n$ outside of $\mathsf{supp}\ U$.
\begin{itemize}\setlength{\itemsep}{-3pt}
  \item If there are exactly 4 idempotents $e_1,e_2,e_3$ and $e_i$ with $4\leqslant i\leqslant n$ outside of $\mathsf{supp}\ U$, then $s=n-3$ and $U$ becomes a support $\tau$-tilting $\Lambda_{[4, n]}$-module with support-rank $n-4$. Let $P:=[4,n]$ be the indecomposable projective $\Lambda_{[4, n]}$-module at vertex $4$. Since $\Lambda_{[4, n]}$ is isomorphic to the path algebra ${\bf{A}}_{n-3}$, we have $T:=P\oplus U\in Q_{n-3}(\Lambda_n; e_n)$ by Lemma \ref{lemma-path-alg-type-A}, and $T$ maps to $U$.
  \item Otherwise, there are at least two idempotents in $\left \{ e_4,e_5,\dots,e_n \right \}$ outside of $\mathsf{supp}\ U$. Let $j>i>\cdots \geqslant 4$ be the first two numbers in decreasing order of such subscripts. Then, $e_{i+1}$ does not appear in $\mathsf{supp}\ \tau U$, and we can find an indecomposable projective $\Lambda_n$-module $P:=[i+1,n]$ such that $\mathsf{Hom}_{\Lambda_n}(P, \tau U)= 0$. Hence, $T:=P\oplus U \in Q_s(\Lambda_n; e_n)$. In fact, $Te_j\neq 0$ and $Te_{j'}=0$ for any $j'\neq j$ satisfying $Ue_{j'}=0$. Moreover, $T$ maps to $U$.
\end{itemize}
Therefore, $\mathfrak{q}$ is a surjection.
\end{proof}

We denote by $[2,4,\ldots,n]$ (resp., $[3,4,\ldots,n]$) the indecomposable projective $\Lambda_n$-module at vertex 2 (resp., 3).
\begin{proposition}\label{lambda-n-2}
For any $n\geqslant 4$, we have
\begin{center}
$a_{n-2}(\Lambda_n)=a_{n-2}(\Lambda_{n-1})+a_{n-3}(\Lambda_n)+a_{n-3}({\bf{A}}_{n-3})$.
\end{center}
\end{proposition}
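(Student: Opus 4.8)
The plan is to run the same argument as in Proposition \ref{lambda-s}, and to locate precisely the point where the bijection used there breaks down once $s=n-2$. As in that proof, I would first split $Q_{n-2}(\Lambda_n)$ according to whether $e_n$ lies in the support. Every $M$ with $Me_n=0$ is exactly a support $\tau$-tilting $\Lambda_{n-1}$-module of support-rank $n-2$, so these contribute $a_{n-2}(\Lambda_{n-1})$. It therefore remains to establish
\begin{center}
$|Q_{n-2}(\Lambda_n;e_n)|=a_{n-3}(\Lambda_n)+a_{n-3}({\bf A}_{n-3})$,
\end{center}
and I would do this by re-examining the map $\mathfrak{q}\colon Q_{n-2}(\Lambda_n;e_n)\to Q_{n-3}(\Lambda_n)$, $\mathfrak{q}(T)=T/X$, where $X$ is the indecomposable summand of $T$ of largest length with $Xe_n\neq0$.

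The essential new feature is that support-rank $n-2$ is large enough for the removed summand $X$ to reach the branching vertices $2$ and $3$, where, unlike in the type-$\mathbb{A}$ region treated in Proposition \ref{lambda-s}, a uniserial module ending at $n$ is no longer unique: it may run through $2$ or through $3$. I expect $\mathfrak{q}$ to remain surjective, reconstructed exactly as before, and to be analysed by its fibres. For $U\in Q_{n-3}(\Lambda_n)$ having at least one vertex of the arm $\{4,5,\dots,n\}$ outside its support, there is a unique longest uniserial $\mathbb{A}$-type module ending at $n$ that can be glued back, so the fibre is a singleton. The exception is $U$ supported exactly on $\{4,5,\dots,n\}$: since this arm has $n-3$ vertices, such $U$ are precisely the $\tau$-tilting ${\bf A}_{n-3}$-modules (via $\Lambda_n/\langle e_1+e_2+e_3\rangle\cong{\bf A}_{n-3}$), and each of them can be completed in two ways, namely $\substack{3\\4\\\vdots\\n}\oplus U$ and $\substack{2\\4\\\vdots\\n}\oplus U$, both of support-rank $n-2$ and both mapping to $U$. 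Since by Lemma \ref{lemma-path-alg-type-A} every such $U$ contains the projective $\substack{4\\\vdots\\n}$ of length $n-3$ as a summand, the glued module is in each case the strictly longest summand ending at $n$, so $\mathfrak{q}$ does send both completions to $U$.

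I would then conclude that $\mathfrak{q}$ has singleton fibres except over the $a_{n-3}({\bf A}_{n-3})$ arm-supported modules, where the fibre has exactly two elements; summing gives the displayed count, and hence $a_{n-2}(\Lambda_n)=a_{n-2}(\Lambda_{n-1})+a_{n-3}(\Lambda_n)+a_{n-3}({\bf A}_{n-3})$. The main obstacle is precisely this fibre analysis in the branch region. One must check that the two branch completions genuinely give $\tau$-rigid support $\tau$-tilting pairs, and, crucially, that no $U$ using a branch vertex can receive a second completion: a rank count shows that a rank-$(n-3)$ module meeting $\{1,2,3\}$ must omit some arm vertex, so gluing $\substack{3\\4\\\vdots\\n}$ or $\substack{2\\4\\\vdots\\n}$ would restore that vertex and overshoot the support-rank, ruling out the branch gluings and keeping the fibre a singleton. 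Injectivity away from the arm then follows as in Proposition \ref{lambda-s}.
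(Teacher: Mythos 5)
Your overall strategy is the same as the paper's: split off the modules with $Me_n=0$ (giving $a_{n-2}(\Lambda_{n-1})$), then count $Q_{n-2}(\Lambda_n;e_n)$ via the fibres of $\mathfrak{q}(T)=T/X$, with doubled fibres exactly over the arm-supported $U$ (the $\tau$-tilting ${\bf A}_{n-3}$-modules, completed by both $[2,4,\ldots,n]$ and $[3,4,\ldots,n]$). That part, and the final count, agree with the paper. But your fibre analysis for $U$ meeting a branch vertex contains a genuine error. Take $U\in Q_{n-3}(\Lambda_n)$ with $Ue_2\neq 0$ and $Ue_1=Ue_3=0$, so $U$ misses $1$, $3$ and exactly one arm vertex $e_i$. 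Your rank count asserts that gluing $X_1=[2,4,\ldots,n]$ ``restores that vertex and overshoots the support-rank''; it does not: the union of the supports is $\{2,4,\ldots,n\}$, which has exactly $n-2$ elements. In fact this branch gluing is the \emph{unique} completion of such a $U$: working over $\Lambda_n/\langle e_1+e_3\rangle\cong{\bf A}_{n-2}$, Lemma \ref{lemma-path-alg-type-A} forces any $\tau$-tilting completion to contain the projective-injective $[2,4,\ldots,n]$. So the element you ruled out is precisely the one that exists.

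Worse, the arm gluing you propose in its place does not exist, so taken literally your argument leaves these $U$ with empty fibres, destroying surjectivity and hence the count. Concretely, for $n=5$ and $U=\substack{2\\4}\oplus\substack{4}$ (missing $1,3,5$): one computes $\tau\bigl(\substack{2\\4}\bigr)=\substack{3\\4\\5}$ over $\Lambda_5$, so $\mathsf{Hom}_{\Lambda_5}\bigl(\substack{4\\5},\tau\bigl(\substack{2\\4}\bigr)\bigr)\neq 0$ and $\mathsf{Hom}_{\Lambda_5}\bigl(\substack{5},\tau\bigl(\substack{2\\4}\bigr)\bigr)\neq 0$; hence no uniserial arm module ending at $5$ can be added, while $\substack{2\\4\\5}\oplus\substack{2\\4}\oplus\substack{4}$ is a perfectly good element of $Q_{3}(\Lambda_5;e_5)$ mapping to $U$. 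The paper's proof turns exactly on this case distinction: when exactly two of $e_1,e_2,e_3$ lie outside the support of $U$, the subcase $Ue_1\neq 0$ is handled by an almost-complete support $\tau$-tilting pair argument and yields the arm completion $[4,n]$, whereas the subcases $Ue_2\neq0$ and $Ue_3\neq0$ are handled by Lemma \ref{lemma-path-alg-type-A} applied to ${\bf A}_{n-2}$ and yield the branch completions $X_1$, $X_2$ respectively. Your single claim ``longest uniserial arm module glued back, branch gluings excluded by rank count'' is false in the latter two subcases, and the proof cannot be completed without replacing it by this case analysis.
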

\begin{proof}
Let $e_i$ be the idempotent of $\Lambda_n$ at vertex $i$. For any support $\tau$-tilting $\Lambda_n$-module $M$ satisfying $Me_n=0$, it is obvious that $M$ is a support $\tau$-tilting $\Lambda_{n-1}$-module. Then, the number of such modules with support-rank $n-2$ is $a_{n-2}(\Lambda_{n-1})$.

Let $Q_{n-2}(\Lambda_n; e_n)$ be the set of support $\tau$-tilting $\Lambda_n$-modules $T$ with support-rank $n-2$ and $Te_n\neq0$. For any $T\in Q_{n-2}(\Lambda_n; e_n)$, we denote it by $T=X\oplus U$ with an indecomposable direct summand $X$ satisfying $Xe_n\neq0$. We may set $X:=\left [n-t+1, n \right ]$ of the largest possible length $t$, while $X=[2,4,\ldots,n]$ is also allowed if $t=n-2$. We show that $\mu^-_X(T)=U$ and therefore, $U\in Q_{n-3}(\Lambda_n)$. Then, we can define a map $\mathfrak{q}$ from $Q_{n-2}(\Lambda_n; e_n)$ to $Q_{n-3}(\Lambda_n)$ by $\mathfrak{q}(T)=U$.
\begin{itemize}\setlength{\itemsep}{-3pt}
  \item If $t\leqslant n-3$, this is similar to the situation in the proof of Proposition \ref{lambda-s}. Thus, $Te_m=0$ for any arrow $m\longrightarrow n-t+1$ such that $T=X\oplus V\oplus Z$, where $X\oplus V$ is a $\tau$-tilting $\Lambda_{[n-t+1, n]}$-module with the unique indecomposable projective-injective $\Lambda_{[n-t+1, n]}$-module $X$, the supports of $X\oplus V$ and $Z$ are disjoint and $\mathsf{supp}\ Z$ does not contain $e_m$ with $m\longrightarrow n-t+1$. By Lemma \ref{lemma-path-alg-type-A}, we have $\mu^-_X(T)=V\oplus Z$. In this case, $\mathfrak{q}(T_1)\neq \mathfrak{q}(T_2)$ if $T_1\neq T_2 \in Q_{n-2}(\Lambda_n; e_n)$.
  \item Let $t=n-2$. Then, $\mathsf{supp}\ X$ is either $\left \{ e_2,e_4,\dots,e_n \right \}$ or $\left \{ e_3,e_4,\dots,e_n \right \}$ such that $X$ is uniquely determined (since $\mathsf{supp}\ X$ cannot contain all the idempotents $\left \{ e_2,e_3, e_4,\dots,e_n \right \}$). Then, $Te_1=0$ is obvious and $\mu^-_X(T)=U$ is also true. In fact, let $T:=X\oplus U$ and $X:=[2,4,\ldots,n]$. The condition $Te_1=Te_3=0$ makes $T$ to be a $\tau$-tilting ${\bf{A}}_{n-2}$-module and makes $X$ to be the unique indecomposable projective-injective ${\bf{A}}_{n-2}$-module. By Definition-Theorem \ref{def-left-mutation}, we deduce that $\mu^-_X(T)=U$. Similarly, one can observe the fact $\mu^-_X(T)=U$ for $X=[3,4,\ldots,n]$.
\end{itemize}

Let $X_1:=[2,4,\ldots,n]$ and $X_2:=[3,4,\ldots,n]$. We observe that the map $\mathfrak{q}$ defined above is not an injection because $\mathfrak{q}(X_1\oplus U)=\mathfrak{q}(X_2\oplus U)=U$ whenever $U$ is a $\tau$-tilting $\Lambda_{[4,n]}$-module, which appears in the case $t=n-2$. Also, it will be useful to mention that for any $T\in Q_{n-2}(\Lambda_n; e_n)$, if there exists an idempotent $e_i\in\{e_4,e_5,\ldots, e_n\}$ satisfying $Te_i=0$, then $t\leqslant n-i\leqslant n-4$.

Next, we show that the map $\mathfrak{q}: Q_{n-2}(\Lambda_n; e_n)\rightarrow Q_{n-3}(\Lambda_n)$ is a surjection. For any $U\in Q_{n-3}(\Lambda_n)$, there exist exactly three idempotents of $\Lambda_n$ outside of $\mathsf{supp}\ U$.
\begin{itemize}\setlength{\itemsep}{-3pt}
  \item Suppose that there are at least two idempotents in $\left \{ e_4,e_5,\dots,e_n \right \}$ outside of $\mathsf{supp}\ U$. Let $j>i\geqslant 4$ (or $j>i>k\geqslant 4$ if $k$ exists) be the order of such numbers. Then, $e_{i+1}$ does not appear in $\mathsf{supp}\ \tau U$, so that $\mathsf{Hom}_{\Lambda_n}(P, \tau U)= 0$ with the indecomposable projective $\Lambda_n$-module $P:=[i+1,n]$. Hence, $P\oplus U \in Q_{n-2}(\Lambda_n; e_n)$ maps to $U$.

      Note that for any $Y$ satisfying $Y\oplus U\in Q_{n-2}(\Lambda_n; e_n)$ and $\mathfrak{q}(Y\oplus U)=U$, the support-rank of $Y\oplus U$ is the support-rank of $U$ plus 1. Then, there always exists an idempotent $e_k\in\{e_4,e_5,\ldots, e_n\}$ satisfying $(Y\oplus U)e_k=0$, so that $Y\oplus U$ is included in the case $t\leqslant n-4$. By the injectivity of $\mathfrak{q}$ in the case $t\leqslant n-3$, we conclude that $P\oplus U \in Q_{n-2}(\Lambda_n; e_n)$ is uniquely determined in this case.

  \item Suppose that there is exactly one idempotent $e_i$ in $\left \{ e_4,e_5,\dots,e_n \right \}$ outside of $\mathsf{supp}\ U$, i.e., there are exactly two idempotents of $e_1,e_2,e_3$ outside of $\mathsf{supp}\ U$.

      If $Ue_1\neq 0, Ue_2=0, Ue_3=0$, then $U=S_1\oplus V$, where $V$ is a support $\tau$-tilting $\Lambda_{[4, n]}$-module with support-rank $n-4$. We observe that $(U, P_2\oplus P_3)$ is an almost complete support $\tau$-tilting pair, so that it has two completions and one of which is $(U, P_2\oplus P_3\oplus P_i)$. Since $Ue_j\neq 0$ for any $i\neq j\geqslant 4$, $(U, P_2\oplus P_3\oplus P_j)$ cannot be a support $\tau$-tilting pair and then, the other completion must be of the form $(U\oplus Y, P_2\oplus P_3)$. In particular, $Ye_2=Ye_3=0$ holds. Since $Y$ is indecomposable and cannot be $S_1$, $Ye_1=0$ also holds. Since the support-rank of $Y\oplus U$ is the support-rank of $U$ plus 1, we must have $Ye_i\neq 0$. Then, $Y\oplus V$ is a $\tau$-tilting $\Lambda_{[4, n]}$-module. Since $V$ is support-rank $n-4$, it cannot have $[4,n]$ as a direct summand. By Lemma \ref{lemma-path-alg-type-A}, $Y=[4,n]$ is unique and $Y\oplus U\in Q_{n-2}(\Lambda_n; e_n)$. Moreover, $\mathfrak{q}(Y\oplus U)=U$.

      If $Ue_1=0, Ue_2\neq0, Ue_3=0$, similar to the above case, we can find a $Y$ satisfying $Ye_1=Ye_3=0$ such that $Y\oplus U\in Q_{n-2}(\Lambda_n; e_n)$ maps to $U$. Then, $U$ can be considered as a support $\tau$-tilting ${\bf{A}}_{n-2}$-module with support-rank $n-3$ as well as $Y\oplus U$ can be considered as a $\tau$-tilting ${\bf{A}}_{n-2}$-module. By Lemma \ref{lemma-path-alg-type-A}, $Y$ is unique and it must be $X_1$.

      If $Ue_1=0, Ue_2=0, Ue_3\neq 0$, similar to above, $X_2\oplus U\in Q_{n-2}(\Lambda_n; e_n)$ maps to $U$ and $X_2$ is uniquely determined by Lemma \ref{lemma-path-alg-type-A}.

   \item Suppose that $e_1,e_2,e_3$ are outside of $\mathsf{supp}\ U$. Then, $U$ is actually a $\tau$-tilting $\Lambda_{[4,n]}$-module and $[4,n]$ must be an indecomposable (projective) direct summand of $U$ following Lemma \ref{lemma-path-alg-type-A}. Then, $\mathsf{Hom}_{\Lambda_n}(X_i, \tau U)= 0$ for $i=1,2$ implies that $X_1\oplus U, X_2\oplus U \in Q_{n-2}(\Lambda_n; e_n)$ and both of them are mapped to $U$.

       We show that $X_1$ and $X_2$ are the only possible cases. Assume that $Y\oplus U \in Q_{n-2}(\Lambda_n; e_n)$ maps to $U$. Then, only one of $Ye_1$, $Ye_2$, $Ye_3$ is not zero. If $Ye_1\neq 0$, $Y$ must be $S_1$ since $Y$ is indecomposable and $Ye_2=Ye_3=0$. But, this contradicts with the fact $Ye_n\neq 0$ deduced by $\mathfrak{q}(Y\oplus U)=U$. We have either $Ye_1=Ye_2=0$, $Ye_3\neq 0$, $Ye_n\neq 0$ or $Ye_1=Ye_3=0$, $Ye_2\neq 0$, $Ye_n\neq 0$, so that $Y=X_1$ or $X_2$.
\end{itemize}

Now, we found that the map $\mathfrak{q}: Q_{n-2}(\Lambda_n; e_n)\rightarrow Q_{n-3}(\Lambda_n)$ is indeed a surjection. If one wants to use this surjection to count the number of modules in $Q_{n-2}(\Lambda_n; e_n)$, one should note that both $X_1\oplus U$ and $X_2\oplus U $ are mapped to $U$ whenever $U$ is a $\tau$-tilting $\Lambda_{[4,n]}$-module, and it is the only part that needs to be double calculated. These are exactly the $a_{n-3}(\Lambda_{[4,n]})=a_{n-3}({\bf{A}}_{n-3})$ pairs of modules in $Q_{n-2}(\Lambda_n; e_n)$. Therefore, the number of modules in $Q_{n-2}(\Lambda_n; e_n)$ is $a_{n-3}(\Lambda_n)+a_{n-3}({\bf{A}}_{n-3})$.
\end{proof}

We define $\mathbb{A}_{2}^{1}:={\bf{A}}_2$ and $\mathbb{A}_{n}^{1}:=KQ/\langle\alpha\beta\rangle$ for any $n\geqslant3$, where
\begin{center}
$ Q: \xymatrix{1\ar[r]^{\alpha}&2\ar[r]^{\beta}&3\ar[r]&\cdots \ar[r]&n-1\ar[r]&n}$.
\end{center}

\begin{proposition}\label{A.6}
Let $n\geqslant 4$, we have
\begin{description}\setlength{\itemsep}{-3pt}
  \item[(1)] $a_s(\mathbb{A}_{n}^{1})=a_s(\mathbb{A}_{n-1}^{1})+a_{s-1}(\mathbb{A}_{n}^{1})$ for any $1\leqslant s \leqslant n-2$.
  \item[(2)]  $a_{n-1}(\mathbb{A}_{n}^{1})=a_{n-1}(\mathbb{A}_{n-1}^{1})+a_{n-1}({\bf{A}}_{n-1})+a_{n-2}({\bf{A}}_{n-2})+\sum\limits_{i=3}^{n-1}a_{i-1}(\mathbb{A}_{i-1}^{1})\cdot a_{n-i}({\bf{A}}_{n-i})$.
  \item[(3)]  $a_n(\mathbb{A}_{n}^{1})=a_{n-1}({\bf{A}}_{n-1})+a_{n-2}({\bf{A}}_{n-2})$.
\end{description}
\end{proposition}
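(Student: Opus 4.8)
The plan is to regard $\mathbb{A}_n^1$ as a linearly oriented Nakayama algebra and to reuse the ``delete the sink, strip off the longest summand reaching $n$'' strategy of Propositions \ref{lambda-s} and \ref{lambda-n-2}, watching the single special vertex $1$ that carries the relation $\alpha\beta$. First I would record the module theory: since the only vanishing path is $1\to2\to3$, the indecomposable $\mathbb{A}_n^1$-modules are exactly the uniserial intervals $[a,b]$ (top $S_a$, socle $S_b$) with $2\leqslant a\leqslant b\leqslant n$, together with $S_1=[1,1]$ and $P_1=[1,2]$. Computing minimal projective presentations gives $\tau[a,b]=[a+1,b+1]$ for $2\leqslant a\leqslant b<n$, while $\tau[a,n]=\tau[1,2]=0$ and $\tau S_1=S_2$. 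Two facts will be used throughout: on modules supported in $\{2,\dots,n\}$ (respectively $\{3,\dots,n\}$) this translate coincides with the Auslander--Reiten translate of the path algebra ${\bf{A}}_{n-1}$ (respectively ${\bf{A}}_{n-2}$) obtained by deleting vertex $1$ (respectively vertices $1,2$); and because $\tau S_1=S_2$ has top $S_2$, we have $\mathsf{Hom}(\,[2,b],\tau S_1)\neq0$ for all $b\geqslant 2$, so $S_1$ and any $[2,b]$ are never simultaneously $\tau$-rigid.

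For (1), fix $1\leqslant s\leqslant n-2$ and split $Q_s(\mathbb{A}_n^1)$ by whether $Me_n=0$. The modules with $Me_n=0$ are the support $\tau$-tilting modules of $\mathbb{A}_n^1/\langle e_n\rangle\cong\mathbb{A}_{n-1}^1$ of support-rank $s$, accounting for $a_s(\mathbb{A}_{n-1}^1)$. For the modules with $Me_n\neq0$ I would mimic the bijection $\mathfrak{q}$ of Proposition \ref{lambda-s}: write $T=X\oplus U$ with $X=[a,n]$ the longest summand reaching $n$; its length $t$ satisfies $t\leqslant s\leqslant n-2$, so $a\geqslant 3$ and the whole argument lives on the tail $\{a,\dots,n\}$, a genuine path algebra ${\bf{A}}_t$ whose unique indecomposable projective-injective is $X$. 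Lemma \ref{lemma-path-alg-type-A} then yields $\mu_X^-(T)=U$ and a bijection $Q_s(\mathbb{A}_n^1;e_n)\to Q_{s-1}(\mathbb{A}_n^1)$, contributing $a_{s-1}(\mathbb{A}_n^1)$. As in Proposition \ref{lambda-s}, the only delicate point is surjectivity, i.e.\ reconstructing the correct $X=[a,n]$ from a prescribed $U\in Q_{s-1}(\mathbb{A}_n^1)$ from the gaps in the support of $U$; the bound $s\leqslant n-2$ guarantees enough room to do this while keeping $a\geqslant 3$.

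For (2) I would partition $Q_{n-1}(\mathbb{A}_n^1)$ by the unique vertex $v$ with $Me_v=0$, each such $M$ being a full support $\tau$-tilting module of $\mathbb{A}_n^1/\langle e_v\rangle$. The sink case $v=n$ gives $a_{n-1}(\mathbb{A}_{n-1}^1)$. For $3\leqslant v\leqslant n-1$ the quotient disconnects as $\mathbb{A}_{v-1}^1\times{\bf{A}}_{n-v}$, a head still carrying the relation and a relation-free tail, so the product description of support $\tau$-tilting modules over a disconnected algebra produces the factor $a_{v-1}(\mathbb{A}_{v-1}^1)\cdot a_{n-v}({\bf{A}}_{n-v})$, which is the $i=v$ term of the stated sum. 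The two remaining boundary cases close the count: $v=1$ gives $\mathbb{A}_n^1/\langle e_1\rangle\cong{\bf{A}}_{n-1}$ and the term $a_{n-1}({\bf{A}}_{n-1})$, while $v=2$ gives $\mathbb{A}_n^1/\langle e_2\rangle\cong K\times{\bf{A}}_{n-2}$ and the term $a_{n-2}({\bf{A}}_{n-2})$. Adding the four contributions reproduces the formula.

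For (3) I would stratify the $\tau$-tilting modules $T$ (support-rank $n$) by their summand at vertex $1$, which is forced to be $S_1$ or $P_1$. Since full support makes vertex $2$ appear and $S_1$ is incompatible with every $[2,b]$, vertex $2$ can only be covered by $P_1$ whenever $S_1\in T$; hence $P_1\in T$ always, and there are exactly two cases. If $P_1\in T$ and $S_1\notin T$, then $T=P_1\oplus T'$ with $T'$ of full support on $\{2,\dots,n\}$, and the $\tau$-coincidence above identifies $T'$ with a $\tau$-tilting ${\bf{A}}_{n-1}$-module, giving $a_{n-1}({\bf{A}}_{n-1})$. If $S_1,P_1\in T$, then $T=S_1\oplus P_1\oplus T''$ and the incompatibility of $S_1$ with all $[2,b]$ forces $T''$ off vertex $2$, so $T''$ is a $\tau$-tilting ${\bf{A}}_{n-2}$-module on $\{3,\dots,n\}$, giving $a_{n-2}({\bf{A}}_{n-2})$; here I would confirm $\tau$-rigidity of $S_1\oplus P_1\oplus T''$ directly from $\tau S_1=S_2$ and $\tau P_1=0$. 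Summing gives (3). I expect the real work to be concentrated not in this bookkeeping but in two places: the surjectivity step of (1) (as already in Proposition \ref{lambda-s}), and pinning down the $\tau$-rigidity comparisons of the setup paragraph firmly enough that the bijections in (1), (3) and the product decomposition in (2) are exact rather than merely numerical coincidences.
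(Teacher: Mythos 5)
Your proposal is correct and follows essentially the same route as the paper: part (1) by the deletion/stripping bijection of Proposition \ref{lambda-s}, part (2) by partitioning $Q_{n-1}(\mathbb{A}_n^1)$ according to the unique uncovered vertex and using the product decomposition $\mathbb{A}_{i-1}^1\times{\bf{A}}_{n-i}$, and part (3) by showing $P_1=\substack{1\\2}$ is always a direct summand and then splitting on whether $S_1$ occurs. The only (harmless) variation is in (3), where you justify $P_1\in T$ directly from the classification of indecomposables supported at vertex $1$ and the incompatibility $\mathsf{Hom}([2,b],\tau S_1)\neq 0$, whereas the paper invokes the mutation-poset argument of Lemma \ref{lemma-path-alg-type-A}.
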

\begin{proof}
(1) The proof is similar to the proof of Proposition \ref{lambda-s}. We omit the details.

(2) By the definition of support $\tau$-tilting modules, we have
\begin{center}
$a_{n-1}(\mathbb{A}_{n}^{1})=\sum_{i=1}^{n}a_{n-1}(\mathbb{A}_{n}^{1}/\langle e_i\rangle)$.
\end{center}
Then, one can easily check the formula.

(3) This follows from Proposition 2.32 and Theorem 2.33 in \cite{Ada-nakayama}.
\end{proof}

By combining Proposition \ref{A.6} (3) and $a_n({\bf{A}}_n)=\frac{1}{n+1}\binom{2n}{n}$, we have
\begin{corollary}\label{A1}
$a_n(\mathbb{A}_{n}^{1})=\frac{1}{n}\binom{2n-2}{n-1}+\frac{1}{n-1}\binom{2n-4}{n-2}$ is the sequence \underline{A005807} in \cite{S-oeis}.
\end{corollary}

Let ${\bf{D}}_n$ be the path algebra presented by
\begin{center}
$\vcenter{\xymatrix@C=0.7cm@R=0.01cm{1\ar[dr]^{ }&&&&&\\ &3\ar[r]&4\ar[r]&\cdots \ar[r]&n-1\ar[r]&n\\2\ar[ur]_{ }&&&&&}}$.
\end{center}
We recall $a_{n}(\textbf{D}_n)=\frac{3n-4}{2n-2}\binom{2n-2}{n-2}$ from \cite{ONFR-number-tilting}.

\begin{proposition}\label{lambda-n-1}
For any $n\geqslant 5$, we have
\begin{center}
$a_{n-1}(\Lambda_n)=a_{n-1}(\Lambda_{n-1})+a_{n-1}({\bf{D}}_{n-1})+2a_{n-1}(\mathbb{A}_{n-1}^{1})+\sum\limits_{i=4}^{n-1}a_{i-1}(\Lambda_{i-1})\cdot a_{n-i}({\bf{A}}_{n-i})$.
\end{center}
\end{proposition}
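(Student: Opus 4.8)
The plan is to partition $Q_{n-1}(\Lambda_n)$ according to the unique primitive idempotent missing from the support. By Proposition \ref{support-rank}, a support $\tau$-tilting module $T$ of support-rank $n-1$ has $|T|=n-1$, so exactly one index $j$ satisfies $Te_j=0$; hence the subsets $\{T\in Q_{n-1}(\Lambda_n):Te_j=0\}$, $1\leqslant j\leqslant n$, are disjoint and exhaust $Q_{n-1}(\Lambda_n)$, so that, unlike in Proposition \ref{lambda-n-2}, no fibre is counted twice. For each fixed $j$, the support $\tau$-tilting pair correspondence recalled at the start of this appendix identifies $\{T:Te_j=0\}$ with the set of $\tau$-tilting modules over $B_j:=\Lambda_n/\Lambda_n e_j\Lambda_n$ via $T\mapsto(T,e_j\Lambda_n)$. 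Thus the count reduces to identifying each $B_j$ and reading off its number of $\tau$-tilting modules.

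First I would dispose of $j=n$: deleting the sink $e_n$, which carries no relation, recovers $\Lambda_{n-1}$ and contributes $a_{n-1}(\Lambda_{n-1})$ (this is precisely the $Te_n=0$ part used in the earlier propositions). The delicate identifications are $j\in\{1,2,3\}$, where one must track the commutativity relation $\alpha\mu-\beta\nu$. Deleting $e_1$ removes both $\alpha$ and $\beta$, so the relation becomes void and $B_1$ is the hereditary algebra of the $D$-type quiver on $\{2,\dots,n\}$ with sources $2,3$ merging at $4$ and tail $4\to\cdots\to n$; after relabelling this is ${\bf{D}}_{n-1}$, contributing $a_{n-1}({\bf{D}}_{n-1})$. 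Deleting $e_2$ removes the path $\alpha\mu$, so in $B_2$ we get $\overline{\beta\nu}=\overline{\alpha\mu}=0$; the surviving quiver $1\to3\to4\to\cdots\to n$ thereby inherits a zero relation on its first composite, giving $B_2\simeq\mathbb{A}_{n-1}^1$. The automorphism of $Q_n$ swapping $2\leftrightarrow3$, $\alpha\leftrightarrow\beta$, $\mu\leftrightarrow\nu$ shows $B_3\simeq B_2\simeq\mathbb{A}_{n-1}^1$, so $j=2,3$ together contribute $2a_{n-1}(\mathbb{A}_{n-1}^1)$.

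Finally, for $4\leqslant j\leqslant n-1$ the deleted vertex lies on the linear tail, so $B_j$ disconnects as $\Lambda_{j-1}\times{\bf{A}}_{n-j}$, with $\Lambda_{j-1}$ supported on $\{1,\dots,j-1\}$ (the full square together with its tail) and ${\bf{A}}_{n-j}$ on $\{j+1,\dots,n\}$; here $n\geqslant5$ guarantees that this tail range is genuine. Since $\tau$-tilting modules over a product are exactly the direct sums of $\tau$-tilting modules over the two factors, this block has size $a_{j-1}(\Lambda_{j-1})\cdot a_{n-j}({\bf{A}}_{n-j})$, and summing over $j$ (renamed $i$) produces $\sum_{i=4}^{n-1}a_{i-1}(\Lambda_{i-1})\,a_{n-i}({\bf{A}}_{n-i})$. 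Adding the four contributions gives the asserted identity. The main obstacle is the relation bookkeeping for $B_1,B_2,B_3$: one must notice that deleting a shoulder vertex ($e_2$ or $e_3$) turns the commutativity relation into a zero relation and yields $\mathbb{A}_{n-1}^1$ rather than a hereditary type-$\mathbb{A}$ algebra, while deleting the source $e_1$ makes the relation disappear and yields the hereditary ${\bf{D}}_{n-1}$; the remaining steps are the clean partition and the standard product decomposition of support $\tau$-tilting modules.
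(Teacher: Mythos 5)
Your proposal is correct and follows essentially the same route as the paper's proof: partition $Q_{n-1}(\Lambda_n)$ by the unique idempotent $e_i$ killed by $T$, identify the quotient algebras $\Lambda_n/\Lambda_n e_i\Lambda_n$ as $\mathbf{D}_{n-1}$ (for $i=1$), $\mathbb{A}_{n-1}^{1}$ (for $i=2,3$, where the commutativity relation degenerates to a zero relation), $\Lambda_{i-1}\times\mathbf{A}_{n-i}$ (for $4\leqslant i\leqslant n-1$), and $\Lambda_{n-1}$ (for $i=n$), then sum the counts. Your write-up is in fact more detailed than the paper's terse case analysis, and the relation bookkeeping you emphasize (void relation when deleting $e_1$, zero relation when deleting $e_2$ or $e_3$) matches the paper exactly.
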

\begin{proof}
This is similar to Proposition \ref{A.6} (2). We omit the details.
\end{proof}

\begin{proposition}\label{lambda-n}
For any $n\geqslant 4$, we have
\begin{center}
$a_n(\Lambda_n)=a_{n-1}(\Lambda_{n})-a_{n-3}({\bf{A}}_{n-3})$.
\end{center}
\end{proposition}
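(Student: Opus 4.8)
The plan is to compare $Q_n(\Lambda_n)$ and $Q_{n-1}(\Lambda_n)$ directly, by a map running opposite to the ones used in the previous propositions. Since every $\tau$-tilting module is sincere, $Q_n(\Lambda_n)=Q_n(\Lambda_n;e_n)$ and $a_n(\Lambda_n)=|Q_n(\Lambda_n)|$. First I would construct a map
\begin{center}
$\Psi\colon Q_{n-1}(\Lambda_n)\longrightarrow Q_n(\Lambda_n)$
\end{center}
as follows. Given $U\in Q_{n-1}(\Lambda_n)$, let $e_i$ be the unique idempotent with $Ue_i=0$, so that $(U,P_i)$ is a support $\tau$-tilting pair. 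Deleting $P_i$ gives the almost complete pair $(U,0)$, which by \cite[Theorem 2.18]{AIR} has exactly two completions. One is $(U,P_i)$ itself; because the support of $U$ is fixed to be $Q_0\setminus\{i\}$, no pair $(U,P_j)$ with $j\neq i$ can be a support $\tau$-tilting pair, so the second completion must be module-sided, of the form $(U\oplus Y_U,0)$ with $Y_U$ indecomposable and $Y_Ue_i\neq0$. Setting $\Psi(U):=U\oplus Y_U$ then lands in $Q_n(\Lambda_n)$, so $\Psi$ is well defined.

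Next I would read off the fibres of $\Psi$. For $T\in Q_n(\Lambda_n)$, call an indecomposable summand $Y$ of $T$ \emph{removable} if $T/Y$ has support-rank $n-1$, i.e. $Y$ is the only summand of $T$ whose support contains some vertex $j$ and $j$ is the unique such vertex. Reversing the construction of $\Psi$, the equality $\Psi(U)=T$ holds exactly when $U=T/Y$ for a removable summand $Y$ of $T$, so $\Psi^{-1}(T)$ is in bijection with the set of removable summands of $T$. Writing $\beta(T)$ for their number, counting the domain of $\Psi$ gives $a_{n-1}(\Lambda_n)=\sum_{T\in Q_n(\Lambda_n)}\beta(T)$, whence
\begin{center}
$a_{n-1}(\Lambda_n)-a_n(\Lambda_n)=\sum_{T\in Q_n(\Lambda_n)}\bigl(\beta(T)-1\bigr).$
\end{center}
Thus the whole statement reduces to proving that this total excess equals $a_{n-3}({\bf A}_{n-3})$.

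To evaluate the sum I would localise at the top ``diamond'' on the vertices $\{1,2,3,4\}$, exactly as the two maximal branch modules $X_1,X_2$ through the vertices $2$ and $3$ were exploited in the proof of Proposition \ref{lambda-n-2}. A summand fails to be removable only through the behaviour at the sink $n$, at $2,3$, and at the source $1$; analysing how $X_1,X_2$ sit inside $T$ together with the projective–injective $P_1=I_n$ and with $S_1$, I expect to see that $\beta(T)=1$ for all $T$ except two families. The \emph{interior} modules (those with $\beta(T)=0$, in which $P_1$, both branch modules, and a $\tau$-tilting module of the tail $\Lambda_{[4,n]}\cong{\bf A}_{n-3}$ occur simultaneously, so that no vertex is uniquely covered) should be indexed by $\tau$-tilting ${\bf A}_{n-3}$-modules and contribute $-1$ each; and the modules obtained from them by the left mutation $\mu^-_{P_1}$ should carry a compensating surplus of removable summands. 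Matching the deficit of the first family against the excess of the second is what I expect to telescope the sum down to exactly $a_{n-3}({\bf A}_{n-3})$.

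The hard part is this last step, namely the global control of $\beta(T)$. In Propositions \ref{lambda-s} and \ref{lambda-n-2} the relevant map restricted to a genuine path-algebra truncation $\Lambda_{[n-t+1,n]}\cong{\bf A}_t$, and Lemma \ref{lemma-path-alg-type-A} applied verbatim; here sincerity forces every $T$ to ``see'' the whole diamond, so $\beta(T)$ is no longer identically $1$, and both interior modules ($\beta=0$) and high-boundary modules ($\beta\geqslant2$) genuinely occur. The main obstacle is therefore to prove that, after cancellation, these two deviations from $\beta\equiv1$ leave precisely $a_{n-3}({\bf A}_{n-3})$.
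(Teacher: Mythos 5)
Your counting framework is correct and rigorous as far as it goes: by Proposition \ref{support-rank} the pair $(U,0)$ is indeed an almost complete support $\tau$-tilting pair, the second completion guaranteed by \cite[Theorem 2.18]{AIR} must be module-sided because $\mathsf{Hom}_{\Lambda_n}(P_j,U)=Ue_j\neq 0$ for $j\neq i$, and your identification of $\Psi^{-1}(T)$ with the set of removable summands of $T$ is sound, so $a_{n-1}(\Lambda_n)=\sum_{T\in Q_n(\Lambda_n)}\beta(T)$ holds and the proposition reduces to $\sum_{T}(\beta(T)-1)=a_{n-3}({\bf A}_{n-3})$. The gap is that everything after this reduction is stated in the conditional (``I expect'', ``should be indexed by''), and what you defer as ``the hard part'' is not a technical remainder but the entire content of the proposition. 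To control $\beta(T)$ globally you need the structural dichotomy that the paper proves in part (1) of its argument: every $\tau$-tilting $\Lambda_n$-module either has the sincere projective $P_1$ as a direct summand, or equals $S_1\oplus\substack{1\\2}\oplus\substack{1\\3}\oplus V$ with $V$ a $\tau$-tilting $\Lambda_{[4,n]}$-module. The paper obtains this from the poset structure on $\mathsf{s\tau\text{-}tilt}\ \Lambda_n$ (any $\tau$-tilting module without $P_1$ lies below $\mu^-_{P_1}(P_1\oplus U)$ for some $\tau$-tilting $P_1\oplus U$), followed by a case analysis of which summands with top $S_1$ can coexist and what the minimal left approximation $P_1\to\mathsf{add}(U)$ does. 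Nothing in your proposal substitutes for this; without it you cannot evaluate $\beta(T)$, nor even exclude further families with $\beta(T)\neq 1$.

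For what it is worth, your guessed picture is the correct one, and once the dichotomy is available your framework closes quickly. If $T$ contains $P_1$, then sincerity of $P_1$ forces every other summand to be non-removable, so $\beta(T)\leqslant 1$; one checks $\beta(T)=0$ exactly when $T$ lies in $\mathcal{S}:=\left\{P_1\oplus\substack{1\\2}\oplus\substack{1\\3}\oplus V\right\}$ with $V$ a $\tau$-tilting $\Lambda_{[4,n]}$-module (there $T/P_1$ is still sincere), and $\beta(T)=1$ otherwise. If $T$ does not contain $P_1$, then $T=S_1\oplus\substack{1\\2}\oplus\substack{1\\3}\oplus V=\mu^-_{P_1}$ of an element of $\mathcal{S}$, and $\beta(T)=3$: the removable summands are $\substack{1\\2}$, $\substack{1\\3}$ and the projective-injective $[4,n]$ of $V$, using Lemma \ref{lemma-path-alg-type-A} both for $[4,n]\in\mathsf{add}(V)$ and for $V/[4,n]\in Q_{n-4}(\Lambda_{[4,n]})$. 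Note the arithmetic is not a cancellation, as your wording suggests: each pair $\bigl(T,\mu^-_{P_1}(T)\bigr)$ with $T\in\mathcal{S}$ contributes $(0-1)+(3-1)=+1$, and $|\mathcal{S}|=a_{n-3}({\bf A}_{n-3})$ gives exactly the claimed total. So your bookkeeping (a fibre count over all of $Q_n(\Lambda_n)$, rather than the paper's surjection $Q_{n-1}(\Lambda_n)\to Q_n(\Lambda_n)\setminus\mathcal{S}$ with overlaps counted by hand) is genuinely different and arguably cleaner, but it still requires the same structural classification to be carried out before it becomes a proof.
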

\begin{proof}
Let $P_i$ be the indecomposable projective $\Lambda_n$-module at vertex $i$. We explain the relation between $Q_n(\Lambda_n)$ and $Q_{n-1}(\Lambda_n)$ as follows.

(1) We show that a $\tau$-tilting $\Lambda_n$-module either has $P_1$ as a direct summand or is of form $\substack{1}\oplus \substack{1\\2}\oplus \substack{1\\3}\oplus V$ with $V$ to be a $\tau$-tilting $\Lambda_{[4,n]}$-module. By the poset structure on $\mathsf{s\tau\text{-}tilt}\ \Lambda_n$, any $\tau$-tilting $\Lambda_n$-module $M\not \simeq \Lambda_n$ satisfies $M\leq \mu_{P_i}^-(\Lambda_n)$ for some $1\leqslant i\leqslant n$. Since $\mu_{P_1}^-(\Lambda_n)$ is not $\tau$-tilting and all $\mu_{P_i}^-(\Lambda_n)$ with $i\geqslant 2$ have $P_1$ as a direct summand, if $M$ does not have $P_1$ as a direct summand, then there must exist a $\tau$-tilting $\Lambda_n$-module $T:=P_1\oplus U$ such that $M\leq \mu_{P_1}^-(T)$. Immediately, we have

\textbf{Case (a).} If $T=P_1\oplus U$ with $Ue_1=0$, then $\mu_{P_1}^-(T)=U$ by Definition-Theorem \ref{def-left-mutation} and $U\in Q_{n-1}(\Lambda_n)$ by Proposition \ref{support-rank}.

Suppose that $Ue_1\neq 0$. We remark that $U$ does not have $S_1$ as a direct summand since $\mathsf{Hom}_{\Lambda_n}(P_1, \tau S_1)\neq 0$. We define
\begin{center}
$M_a:=\begin{smallmatrix}
&1&\\
2&&3\\
&4&\\
&\vdots &\\
&a&
\end{smallmatrix}$
\end{center}
with $4\leqslant a\leqslant n-1$. Then, $U$ has at least one of $\substack{1\\2}$, $\substack{1\\3}$, $\substack{\\ \\2}\substack{1\\ \\ }\substack{\\ \\3}$ and $M_a$ as a direct summand, because these modules are $\tau$-rigid and $Ue_1\neq 0$. In particular, it is worth mentioning that the unique non-zero morphism $f: P_1\rightarrow X$ for any $X\in \{\substack{1\\2}, \substack{1\\3}, \substack{\\ \\2}\substack{1\\ \\ }\substack{\\ \\3},M_a\}$, is actually the projective cover of $X$ and $\mathsf{coker}\ f =0$.
\begin{description}\setlength{\itemsep}{-3pt}
 \item[(a1)] If $U=\substack{1\\2}\oplus V$ with $Ve_1=0$, we have $\mu_{P_1}^-(T)=U$ by substituting $\mathsf{coker}\ f =0$ into Definition-Theorem \ref{def-left-mutation} and hence, $U\in Q_{n-1}(\Lambda_n)$. Since $\tau(\substack{1\\2})=S_3$, we have $S_3\not\subseteq \mathsf{top}\ V$ so that $Ve_3=0$. This implies that $Ue_3=0$ and $Ue_i\neq 0$ for any $i\neq 3$.

  \item[(a2)] If $U=\substack{1\\3}\oplus V$ with $Ve_1=0$, we also have $\mu_{P_1}^-(T)=U$ and $U\in Q_{n-1}(\Lambda_n)$. Since $\tau(\substack{1\\3})=S_2$, we have $Ue_2=0$ and $Ue_i\neq 0$ for any $i\neq 2$.

  \item[(a3)] If $U$ has exactly one of $\substack{\\ \\2}\substack{1\\ \\ }\substack{\\ \\3}$ and $M_a$ as a direct summand, we have $\mu_{P_1}^-(T)=U$ by substituting $\mathsf{coker}\ f =0$ into Definition-Theorem \ref{def-left-mutation} and hence, $U\in Q_{n-1}(\Lambda_n)$. This implies that $Ue_i=0$ for exactly one $i$ with $4\leqslant i\leqslant n$.

  \item[(a4)] If $U$ has two of $\substack{1\\2}$, $\substack{1\\3}$, $\substack{\\ \\2}\substack{1\\ \\ }\substack{\\ \\3}$ and $M_a$ as direct summands and one of the direct summands is $\substack{\\ \\2}\substack{1\\ \\ }\substack{\\ \\3}$ or $M_a$, we also have $\mu_{P_1}^-(T)=U$ since there exist epimorphisms from $\substack{\\ \\2}\substack{1\\ \\ }\substack{\\ \\3}$ or $M_a$ to $\substack{1\\2}$ and $\substack{1\\3}$. Then, $U\in Q_{n-1}(\Lambda_n)$. Since we can make sure that $Ue_2\neq0$ and $Ue_3\neq0$, we have $Ue_i=0$ for exactly one $i$ with $4\leqslant i\leqslant n$.
  \item[(a5)] If $U$ has more than two of $\substack{1\\2}$, $\substack{1\\3}$, $\substack{\\ \\2}\substack{1\\ \\ }\substack{\\ \\3}$ and $M_a$ as direct summands, it must have $\substack{\\ \\2}\substack{1\\ \\ }\substack{\\ \\3}$ or $M_a$ as a direct summand. Then, similar to the above, we have $\mu_{P_1}^-(T)=U$ and $U\in Q_{n-1}(\Lambda_n)$. This also implies that $Ue_i=0$ for exactly one $i$ with $4\leqslant i\leqslant n$.
\end{description}
Then, it remains to consider

\textbf{Case (b).} If $T=P_1\oplus \substack{1\\2}\oplus \substack{1\\3}\oplus V$ with $Ve_1=0$, then $Ve_2=Ve_3=0$, so that $V$ becomes a $\tau$-tilting $\Lambda_{[4,n]}$-module. In fact, if $Ve_2\neq 0$, it implies $S_2\subseteq \mathsf{top}\ V$ so that $\mathsf{Hom}_{\Lambda_n}(V, S_2)\neq 0$. Then, $\tau(\substack{1\\3})=S_2$ implies that $Ve_2$ must be zero. Similarly, $\tau(\substack{1\\2})=S_3$ makes $Ve_3$ to be zero. Then, we have $\mu_{P_1}^-(T)=\substack{1}\oplus \substack{1\\2}\oplus \substack{1\\3}\oplus V$ by simple observation.

Now, we can claim that if a $\tau$-tilting $\Lambda_n$-module $M$ does not have $P_1$ as a direct summand, $M\leq T':=\substack{1}\oplus \substack{1\\2}\oplus \substack{1\\3}\oplus V$ with a $\tau$-tilting $\Lambda_{[4,n]}$-module $V$. Then, we observe that the left mutations of $T'$ with respect to $\substack{1\\2}$ and $\substack{1\\3}$ are not $\tau$-tilting, and the left mutation of $T'$ with respect to one of direct summands of $V$ is equivalent to that of a $\tau$-tilting $\Lambda_{[4,n]}$-module $V$. Therefore, $\substack{1}\oplus \substack{1\\2}\oplus \substack{1\\3}$ must remain in $M$ as a direct summand.

Finally, we conclude that if a $\tau$-tilting $\Lambda_n$-module $M$ does not have $P_1$ as a direct summand, it is of form $\substack{1}\oplus \substack{1\\2}\oplus \substack{1\\3}\oplus V$ with a $\tau$-tilting $\Lambda_{[4,n]}$-module $V$. Moreover, if a $\tau$-tilting $\Lambda_n$-module $T$ has $P_1$ as a direct summand, then $T/P_1\in Q_{n-1}(\Lambda_n)$ if and only if $T\not\simeq P_1\oplus \substack{1\\2}\oplus \substack{1\\3}\oplus V$ with a $\tau$-tilting $\Lambda_{[4,n]}$-module $V$. (This implies that $T/P_1$ is not always included in $Q_{n-1}(\Lambda_n)$. This is also the reason why we distinguish the following set $\mathcal{S}$.)

(2) We may construct a map $\mathfrak{q}$ from $Q_{n-1}(\Lambda_n)$ to $Q_n(\Lambda_n)\setminus  \mathcal{S}$, where
\begin{center}
$\mathcal{S}:=\left \{ P_1\oplus \substack{1\\2}\oplus \substack{1\\3}\oplus V \mid V \ \text{is a}\ \tau\text{-tilting}\  \Lambda_{[4,n]}\text{-module}   \right \}$.
\end{center}

Let $U\in Q_{n-1}(\Lambda_n)$, it is obvious that $U$ does not have $P_1$ as a direct summand since $P_1$ is sincere. We first consider that $U$ has $S_1$ as a direct summand. Since $\tau (1)=\substack{\\ \\2}\substack{1\\ \\ }\substack{\\ \\3}$, we know that $U$ does not have one of $\substack{\\ \\2}\substack{1\\ \\ }\substack{\\ \\3}$, $M_a$, the indecomposable module $N_2$ with $\mathsf{top}\ N_2=S_2$ and the indecomposable module $N_3$ with $\mathsf{top}\ N_3=S_3$ as a direct summand. Since $U\in Q_{n-1}(\Lambda_n)$, there exists exactly one idempotent $e_i$ with $i\neq 1$ satisfying $Ue_i=0$.
\begin{description}\setlength{\itemsep}{-3pt}
  \item[(i)] If $i=2$, the only possible direct summand $Y$ of $U$ satisfying $Ye_3\neq0$ is $\substack{1\\3}$ and the remaining direct summands give a $\tau$-tilting $\Lambda_{[4,n]}$-module $V$, so that $U=\substack{1}\oplus \substack{1\\3}\oplus V$. In this subcase, $\mathfrak{q}$ is defined by mapping $U$ to $\substack{1}\oplus \substack{1\\2}\oplus \substack{1\\3}\oplus V $. To observe that the latter one is included in $Q_n(\Lambda_n)\setminus  \mathcal{S}$, we have $\mathsf{Hom}_{\Lambda_n}(\substack{1\\2}\oplus U, \tau (\substack{1\\2}\oplus U))=0$ since $\tau(\substack{1\\2})=3$ and $\tau(\substack{1\\3})=2$. Moreover, it is easy to check that $\mathfrak{q}$ in this subcase is a bijection.

      If $i=3$, $U=\substack{1}\oplus \substack{1\\2}\oplus V$ with a $\tau$-tilting $\Lambda_{[4,n]}$-module $V$. Similarly, $\mathfrak{q}$ is defined by mapping $U$ to $\substack{1}\oplus \substack{1\\2}\oplus \substack{1\\3}\oplus V \in Q_n(\Lambda_n)\setminus  \mathcal{S}$, and $\mathfrak{q}$ in this subcase is also a bijection.

      Note that the number of $U$ in the cases of $i=2,3$ is $2a_{n-3}(\Lambda_{[4,n]})=2a_{n-3}({\bf{A}}_{n-3})$.

 \item[(ii)] If $i\geqslant 4$, the conditions $Ue_2\neq0$ and $Ue_3\neq 0$ must imply that $U=\substack{1}\oplus \substack{1\\2}\oplus \substack{1\\3}\oplus Z$ with $Ze_1=Ze_2=Ze_3=0$ and $Z\in Q_{n-4}(\Lambda_{[4,n]})$. We may regard $Z$ as a support $\tau$-tilting ${\bf{A}}_{n-3}$-module with support-rank $n-4$. By Lemma \ref{lemma-path-alg-type-A}, $Z$ is the left mutation of $\tau$-tilting $\Lambda_{[4,n]}$-module $V:=P\oplus Z$ with $P=[4,n]$. Then, $\substack{1}\oplus \substack{1\\2}\oplus \substack{1\\3}\oplus V \in Q_n(\Lambda_n)\setminus  \mathcal{S}$ is obvious. In this subcase, $\mathfrak{q}$ is defined by mapping $U$ to $\substack{1}\oplus \substack{1\\2}\oplus \substack{1\\3}\oplus V$ and it is a bijection. Besides, the number of $U$ in this subcase is $a_{n-4}(\Lambda_{[4,n]})=a_{n-4}({\bf{A}}_{n-3})$.
\end{description}
Similar to the situation in Proposition \ref{lambda-n-2}, the map $\mathfrak{q}$ defined in the case where $U$ has $S_1$ as a direct summand is no longer a bijection, but a surjection. If we count the number of modules in $Q_n(\Lambda_n)\setminus  \mathcal{S}$ in this case, there should be $a_{n-3}({\bf{A}}_{n-3})+a_{n-4}({\bf{A}}_{n-3})$ overlaps.

Next, we consider $U\in Q_{n-1}(\Lambda_n)$ such that $Ue_1\neq 0$ and $U$ does not have $S_1$ as a direct summand. There also exists exactly one idempotent $e_i$ with $i\neq 1$ satisfying $Ue_i=0$.
\begin{description}\setlength{\itemsep}{-3pt}
  \item[(iii)] If $i=2$, the only possible direct summand $Y$ of $U$ satisfying $Ye_1\neq0$ is $\substack{1\\3}$ so that $U=\substack{1\\3}\oplus V$ with $Ve_1=0$. Then, $P_1\oplus U \in Q_n(\Lambda_n)\setminus  \mathcal{S}$ is obvious and this is the only one possible case. In fact, by the analysis in (1) before, a completion of $\substack{1\\3}\oplus V$ to a $\tau$-tilting $\Lambda_n$-module which does not belong to $\mathcal{S}$ is either of form $P_1\oplus \substack{1\\3}\oplus W$ with $We_1=We_2=0$ or of form $\substack{1\\2}\oplus\substack{1\\3}\oplus\substack{1}\oplus W$ with $We_1=We_2=We_3=0$, but the latter one is excluded if we restrict  $Ve_1=0$. Therefore, $\mathfrak{q}$ in this subcase is defined by mapping $U$ to $P_1\oplus U$ and it is a bijection to the case (a2).

      Similarly, if $i=3$, we have $U=\substack{1\\2}\oplus V$ with $Ve_1=0$ and $P_1\oplus U \in Q_n(\Lambda_n)\setminus  \mathcal{S}$. Hence, $\mathfrak{q}$ is also defined by mapping $U$ to $P_1\oplus U$ and it is a bijection to the case (a1).

 \item[(iv)] Suppose $i\geqslant 4$. Then, $U$ has at least one of $\substack{1\\2}$, $\substack{1\\3}$, $\substack{\\ \\2}\substack{1\\ \\ }\substack{\\ \\3}$ and $M_a$ as a direct summand.
 \begin{itemize}\setlength{\itemsep}{-3pt}
 \item If $U$ has exactly one of $\substack{1\\2}$ and $\substack{1\\3}$ as a direct summand, say, $U=\substack{1\\2}\oplus V$ with $Ve_1=0$, $Ue_3\neq 0$ implies $S_3\subseteq \mathsf{top}\ V$. Then, $\tau(\substack{1\\2})=S_3$ indicates $\mathsf{Hom}_{\Lambda_n}(V, \tau(\substack{1\\2}))\neq 0$, contradicting with the assumption that $U$ is a $\tau$-rigid module. Also, one can get a contradiction for $U=\substack{1\\3}\oplus V$ with $Ve_1=0$.

   \item If $\substack{1\\2}\oplus \substack{1\\3}$ is a direct summand of $U$ such that $U=\substack{1\\2}\oplus\substack{1\\3}\oplus V$ with $Ve_1=0$, then $Ve_2=Ve_3=0$ by the similar analysis with Case (b) in (1). This implies that $V$ is a support $\tau$-tilting $\Lambda_{[4,n]}$-module with support-rank $n-4$. However, $\left | U \right |=2+n-4=n-2<n-1$, contradicting with $U\in Q_{n-1}(\Lambda_n)$.

   \item Otherwise, $U$ must have one of $\substack{\\ \\2}\substack{1\\ \\ }\substack{\\ \\3}$ and $M_a$ as a direct summand, so that $P_1\oplus U \in Q_n(\Lambda_n)\setminus  \mathcal{S}$ is well-defined by the analysis in (1). In this subcase, $\mathfrak{q}$ is defined by mapping $U$ to $P_1\oplus U$ and it is a bijection to the cases (a3), (a4) and (a5).
 \end{itemize}
\end{description}
We conclude that the map $\mathfrak{q}$ in this case is a bijection.

Lastly, it suffices to consider the following case.
\begin{description}\setlength{\itemsep}{-3pt}
 \item[(v)] If $Ue_1=0$, $P_1\oplus U \in Q_n(\Lambda_n)\setminus  \mathcal{S}$ is well-defined. In fact, it follows from  $\mathsf{Hom}_{\Lambda_n}(P_1, \tau U)=0$. Thus, $\mathfrak{q}$ in this case is also defined by mapping $U$ to $P_1\oplus U$ and it is a bijection. This corresponds to the Case (a) in (1).
\end{description}

Now, we have found that $\mathfrak{q}: Q_{n-1}(\Lambda_n)\rightarrow Q_n(\Lambda_n)\setminus  \mathcal{S}$ is a surjection, which is similar to the situation in Proposition \ref{lambda-n-2}. In particular, the reason why $\mathfrak{q}$ is not an injection is explained in cases (i) and (ii). Then, the number of modules in $Q_n(\Lambda_n)\setminus  \mathcal{S}$ is
\begin{center}
$a_{n-1}(\Lambda_n)-\#\left \{ 1\oplus \substack{1\\3}\oplus V \mid V \in Q_{n-3}(\Lambda_{[4,n]}) \right \}-\#\left \{ 1\oplus \substack{1\\2}\oplus\substack{1\\3}\oplus Z \mid Z\in Q_{n-4}(\Lambda_{[4,n]}) \right \}$.
\end{center}

On the other hand, the number of modules in $\mathcal{S}$ is $a_{n-3}(\Lambda_{[4,n]})=a_{n-3}({\bf{A}}_{n-3})$. Hence, we conclude that
\begin{center}
$\begin{aligned}
a_n(\Lambda_n)=&\#(Q_n(\Lambda_n)\setminus  \mathcal{S})+\#\mathcal{S}\\
=&a_{n-1}(\Lambda_n)-a_{n-3}({\bf{A}}_{n-3})-a_{n-4}({\bf{A}}_{n-3})+a_{n-3}({\bf{A}}_{n-3}).
\end{aligned}$
\end{center}
Note that $a_{n-4}({\bf{A}}_{n-3})=a_{n-3}({\bf{A}}_{n-3})$ by Lemma \ref{lemma-path-alg-type-A}. Then, the statement follows.
\end{proof}

\ \\

Department of Pure and Applied Mathematics, Graduate School of Information Science and Technology, Osaka University, 1-5 Yamadaoka, Suita, Osaka, 565-0871, Japan.

\emph{Email address}: \texttt{q.wang@ist.osaka-u.ac.jp}
\end{spacing}

\end{document}